\numberwithin{equation}{section}
\newcommand{\cat}[1]{\mathsf{#1}}
\newcommand{\mr}[1]{{\rm #1}}
\newcommand{\bunit}{\mathbbm{1}}
\newcommand{\titsrel}[2]{\cT(#1\,\mr{rel}^0\,#2)}
\newcommand{\titsdualrel}[2]{\cT(#1\,\mr{rel}^+\,#2)}
\newcommand{\strel}[2]{\mr{St}(#1\,\mr{rel}^0\,#2)}
\newcommand{\trel}[2]{T(#1\,\mr{rel}^0\,#2)}
\newcommand{\tdualrel}[2]{T(#1\,\mr{rel}^+\,#2)}
\newcommand{\steonerel}[2]{\mr{St}^{E_1}(#1\,\mr{rel}^0\,#2)}
\newcommand{\CircNum}[1]{\ooalign{\hfil\raise .00ex\hbox{\scriptsize #1}\hfil\crcr\mathhexbox20D}}
\newcommand{\bF}{\mathbb{F}}
\newcommand{\bL}{\mathbb{L}}
\newcommand{\bN}{\mathbb{N}}
\newcommand{\bP}{\mathbb{P}}
\newcommand{\bQ}{\mathbb{Q}}
\newcommand{\bZ}{\mathbb{Z}}
\newcommand{\gA}{\bold{A}}
\newcommand{\gC}{\bold{C}}
\newcommand{\gD}{\bold{D}}
\newcommand{\gE}{\bold{E}}
\newcommand{\gN}{\bold{N}}
\newcommand{\gR}{\bold{R}}
\newcommand{\gT}{\bold{T}}
\newcommand{\cC}{\mathcal{C}}
\newcommand{\cL}{\mathcal{L}}
\newcommand{\cO}{\mathcal{O}}
\newcommand{\cS}{\mathcal{S}}
\newcommand{\cT}{\mathcal{T}}
\newcommand{\cX}{\mathcal{X}}
\newcommand{\cY}{\mathcal{Y}}
\newcommand{\bk}{\mathbbm{k}}
\newcommand{\sdash}{\,\cdot\,}
\newcommand\lra{\longrightarrow}
\newcommand{\hcoker}{/\!\!/}
\newcommand{\fS}{\mathfrak{S}}
\newcommand{\moddd}{/\!\!/}
\renewcommand{\epsilon}{\varepsilon}
\newcommand{\Sq}{\mathrm{Sq}}
\newcommand{\Alg}{\cat{Alg}}
\mathchardef\ordinarycolon\mathcode`\:
\theoremstyle{plain}
\newtheorem{MainThm}{Theorem}
\newtheorem{MainCor}[MainThm]{Corollary}
\newtheorem{theorem}{Theorem}[section]
\newtheorem{proposition}[theorem]{Proposition}
\newtheorem{lemma}[theorem]{Lemma}
\newtheorem{corollary}[theorem]{Corollary}
\theoremstyle{definition}
\newtheorem{definition}[theorem]{Definition}
\newtheorem{notation}[theorem]{Notation}
\theoremstyle{remark}
\newtheorem{remark}[theorem]{Remark}
\newtheorem*{remark*}{Remark}
\title[$E_\infty$-cells and general linear groups of finite fields]{$E_\infty$-cells and general linear groups of finite fields \\[.5cm]
$E_{\infty}$-cellules et groupes généraux linéaires sur les corps finis}
\author{S{\o}ren Galatius}
\address{Department of Mathematics\\
	University of Copenhagen\\
	Universitetsparken 5 \\
	2100 K{\o}benhavn {\O} \\
	Denmark}
\email{galatius@math.ku.dk}
\author{Alexander Kupers}
\address{Department of Computer and Mathematical Sciences \\
	University of Toronto Scarborough \\
	1265 Military Trail \\
	Toronto, ON M1C 1A4 \\ 
	Canada}
\email{a.kupers@utoronto.ca}
\author{Oscar Randal-Williams}
\address{Centre for Mathematical Sciences\\
 	Wilberforce Road\\
 	Cambridge CB3 0WB\\
 	UK}
\email{o.randal-williams@dpmms.cam.ac.uk}
\date{\today}
\subjclass[2010]{18F25, 55P48}
\keywords{General linear groups, homological stability, groupes généraux linéaires, stabilité homologique}
\begin{document}
	
\begin{abstract}
We prove new homological stability results for general linear groups over finite fields. These results are obtained by constructing CW approximations to the classifying spaces of these groups, in the category of $E_\infty$-algebras, guided by computations of homology with coefficients in the $E_1$-split Steinberg module. \\

Nous prouvons de nouveaux résultats de stabilité homologique pour les groupes généraux linéaires sur les corps finis. Ces résultats sont obtenus en construisant des approximations cellulaires du classifiant de ces  groupes dans la catégorie des algèbres $E_{\infty}$, guidées par des calculs d’homologie à coefficients dans le module de Steinberg $E_1$-scindé.
\end{abstract}

\maketitle

\vspace{-1.5\baselineskip}

\section{Introduction}The homology of general linear groups over finite fields was studied by Quillen in his seminal paper on the $K$-theory of finite fields  \cite[Theorem 3]{quillenfinite}. One of his main results is the complete determination of the homology groups $H_d(\mr{GL}_n(\bF_q);\bF_\ell)$ where $\bF_q$ is a finite field with $q = p^r$ elements, $\mr{GL}_n(\bF_q)$ denotes the general linear group of the $n$-dimensional vector space $\bF_q^n$, and $\ell \neq p$.

When $\ell=p$ full information about these homology groups is only available in a stable range. Extending linear isomorphisms of $\bF_q^{n-1}$ to $\bF_q^n = \bF_q^{n-1} \oplus \bF_q$ by the identity on the second subspace induces an injective homomorphism 
\[s \colon \mr{GL}_{n-1}(\bF_q) \lra \mr{GL}_{n}(\bF_q)\]
called the \emph{stabilisation map}. Forming the colimit over such stabilisation maps, Quillen has proved \cite[Corollary 2]{quillenfinite} that 
\begin{equation}\label{eq:QuillenVanishing}
H_d(\mr{GL}_\infty(\bF_q);\bF_p) = 0 \text{ for } d > 0,
\end{equation}
which has implications for finite $n$ using homological stability. The best ho\-mo\-lo\-gi\-cal stability result available in the literature is due to Maazen, who proved that $H_d(\mr{GL}_n(\bF_q),\mr{GL}_{n-1}(\bF_q);\bF_p) = 0$ for $d < \frac{n}{2}$ \cite{maazenthesis}. It is an unpublished result of Quillen that for $q \neq 2$ these groups vanish in the larger range $d<n$. (A proof appears in his unpublished notebooks \cite[1974-I, p.~10]{quillennotes}. The first pages of this notebook were unfortunately bleached by sunlight, which makes it difficult to follow the argument. A recent paper of Sprehn and Wahl \cite{SprehnWahl} include an exposition of Quillen's argument, as well as a generalization to other classical groups.)

\subsection{Homological stability} In this paper we study the homology of general linear groups of finite fields of characteristic $p$, with coefficients in $\bF_p$, using $E_k$-cellular approximations.  We developed this technique in \cite{e2cellsIv3}, and in \cite{e2cellsII} we applied it to studying homology of mapping class groups. The case of infinite fields is treated in \cite{e2cellsIV}. For fields with more than two elements we obtain the following strengthening of Quillen's result (see also \cite[Theorem A]{SprehnWahl}).

\begin{MainThm}\label{thm:main-fp} 
If $q = p^r \neq 2$, then 
\[H_d(\mr{GL}_n(\bF_q),\mr{GL}_{n-1}(\bF_q);\bF_p) = 0 \text{ for } d < n+r(p-1)-2.\]
\end{MainThm}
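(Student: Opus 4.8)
The plan is to use the method of $E_\infty$-cellular approximations developed in \cite{e2cellsI}. First assemble the groups into a single $E_\infty$-algebra $\mathbf{R}$ in (simplicial or differential graded) $\bF_p$-modules, graded by rank, with $\mathbf{R}(n) = C_*(B\mr{GL}_n(\bF_q);\bF_p)$ and multiplication induced by the block-sum maps $\mr{GL}_n(\bF_q) \times \mr{GL}_m(\bF_q) \to \mr{GL}_{n+m}(\bF_q)$, so that $H_{n,d}(\mathbf{R}) = H_d(\mr{GL}_n(\bF_q);\bF_p)$. Since $|\bF_q^\times| = q-1$ is prime to $p$, the rank-$1$ part of $\mathbf{R}$ is $\bF_p$ in degree $0$, and its generator $\sigma \in H_{1,0}(\mathbf{R})$ models the stabilization map; hence $H_{n,d}(\mathbf{R}/\sigma) = H_d(\mr{GL}_n(\bF_q),\mr{GL}_{n-1}(\bF_q);\bF_p)$ for the cofiber $\mathbf{R}/\sigma$ of $\sigma\cdot(-)$, and Theorem~\ref{thm:main-fp} becomes the statement that $H_{n,d}(\mathbf{R}/\sigma) = 0$ for $d < n + r(p-1) - 2$.

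By the ``cells-to-stability'' results of \cite{e2cellsI} it suffices to show that $\mathbf{R}$ admits a CW approximation, in the category of $E_\infty$-algebras, with a single cell in bidegree $(1,0)$ realising $\sigma$ and all further cells in bidegrees $(n,d)$ with $d \geq n + r(p-1) - 2$; equivalently, that $H^{E_\infty}_{n,d}(\mathbf{R}) = 0$ whenever $(n,d) \neq (1,0)$ and $d < n + r(p-1) - 2$. It is essential here to work with $E_\infty$- rather than $E_1$-cells: the $E_1$-homology of $\mathbf{R}$ is strictly larger --- the Dyer--Lashof operations applied to $\sigma$, detected by the symmetric subgroups $\Sigma_n \hookrightarrow \mr{GL}_n(\bF_q)$, contribute nonzero classes to $H^{E_1}_{*,*}(\mathbf{R})$ --- and only after quotienting these out, i.e.\ upon passing to $E_\infty$-homology, does the vanishing hold in the full range. (That the cells become sparse in high rank is consistent with Quillen's $H_*(\mr{GL}_\infty(\bF_q);\bF_p) = \bF_p$ \cite{quillenfinite}.)

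To locate the $E_\infty$-cells I would first analyse the $E_1$-homology. The $E_1$-homology $H^{E_1}_{n,*}(\mathbf{R})$ is computed, up to a degree shift, by the \emph{$E_1$-splitting complex} of $\bF_q^n$, built from chains of proper nontrivial direct-sum decompositions (the ``split'' analogue of the Tits building). A Solomon--Tits-type connectivity argument shows this complex is spherical, with reduced homology concentrated in a single degree; that homology is by definition the \emph{$E_1$-split Steinberg module} $\mathrm{St}^{E_1}(\bF_q^n)$, and one obtains an identification, in a range of degrees, of $H^{E_1}_{n,*}(\mathbf{R})$ with the group homology $H_*(\mr{GL}_n(\bF_q);\mathrm{St}^{E_1}(\bF_q^n))$. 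Passing from $E_1$- to $E_\infty$-homology through the comparison spectral sequence --- whose input is governed by the mod-$p$ homology of the symmetric groups, i.e.\ by the Dyer--Lashof operations on $\sigma$ --- one then checks that every class surviving to $H^{E_\infty}_{n,d}(\mathbf{R})$ lies on or above $d = n + r(p-1) - 2$.

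The heart of the argument is therefore a vanishing range, in low degrees, for $H_*(\mr{GL}_n(\bF_q);\mathrm{St}^{E_1}(\bF_q^n))$. Using the parabolic structure of $\mathrm{St}^{E_1}$ --- a filtration whose associated graded is assembled from modules induced up from parabolic, and ultimately Borel, subgroups --- this reduces to controlling the low-degree $\bF_p$-homology of Borel subgroups of smaller general linear groups with twisted coefficients; and since a Borel is its unipotent radical extended by a torus of order prime to $p$, this in turn reduces to coinvariants of the $\bF_p$-homology of elementary abelian $p$-groups under an $\bF_q^\times$-action by scalars. The quantity $r(p-1)$ enters precisely as the first degree in which such coinvariants can be nonzero, which I would pin down by a character-theoretic (base-$p$ digit-sum) estimate; the hypothesis $q \neq 2$ is exactly what makes $r(p-1) - 2 \geq 0$ and what forces the relevant small-rank Steinberg coinvariants to vanish --- already for $n = 2$, where $\mathrm{St}^{E_1}(\bF_q^2)$-coinvariants form a quotient of $H_1(\mr{GL}_2(\bF_q);\bF_p) = \bF_q^\times \otimes \bF_p = 0$. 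The main obstacle will be the bookkeeping that threads these three computations together --- the splitting-complex identification, the $E_1$-to-$E_\infty$ comparison, and the parabolic filtration --- keeping all the degree shifts aligned so that the exact intercept $r(p-1) - 2$ drops out, and in particular ruling out spurious low-degree $E_\infty$-classes produced by the interaction of Steinberg homology across different ranks.
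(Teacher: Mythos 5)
Your setup is correct and much of the machinery you invoke matches the paper: the $E_\infty$-algebra $\gR_{\bF_p}$ with $H_{n,d}(\gR_{\bF_p}) = H_d(\mr{GL}_n(\bF_q);\bF_p)$, the identification of $E_1$-homology with homology in the $E_1$-Steinberg module via Charney's split building, the reduction of $\mr{St}^{E_1}$-homology through a filtration to Steinberg and relative-Steinberg homology, and the eventual appeal to coinvariants of an $\bF_q^\times$-action on the homology of an elementary abelian $p$-group (Quillen's Lemma~16), which is where $r(p-1)$ enters. The paper's route to the relative Steinberg vanishing uses Quillen's join decomposition of the dual relative Tits building and a transfer argument across lines in $W$, rather than a parabolic/Borel reduction in the form you sketch, but these are in the same spirit.

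The serious gap is in your reduction of Theorem~\ref{thm:main-fp} to $E_\infty$-cell structure. You claim it suffices to show that $\gR_{\bF_p}$ admits a CW approximation with only the cell $\sigma$ in bidegree $(1,0)$ and all further cells in bidegrees $(n,d)$ with $d \geq n + r(p-1)-2$, equivalently $H^{E_\infty}_{n,d}(\gR_{\bF_p})=0$ in that range. That vanishing is in fact \emph{false} when $r \geq 2$: by Friedlander--Parshall $H_{p,2p-3}(\gR_{\bF_p})=0$, so the class $\beta Q^1(\sigma)$ (nonzero in bidegree $(p,2p-3)$ of $\gE_\infty(S^{1,0}\sigma)$) must be killed, which forces an $E_\infty$-cell in bidegree $(p,2p-2)$; but $2p-2 < p + r(p-1)-2$ for $r \geq 2$. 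Moreover, even granting the vanishing, the deduction would not go through: running the skeletal spectral sequence for $\overline{\gR}_{\bF_p}/\sigma$ from such a cell structure has as its $E^1$-page $W_\infty(\bF_p\{\sigma,\tau_i\})/(\sigma)$, which carries the symmetric-group (``Nakaoka'') classes $Q^I(\sigma)$ in filtration zero and at slopes as low as $1/2$, and one would have to exhibit differentials killing all of them below the claimed line --- exactly the delicate bookkeeping you flag as the ``main obstacle,'' but which in fact fails to produce a slope-$1$ bound by this method. (This is visible in the paper's own $\bF_2$ argument, where an absolute CW approximation of the same flavor yields only slope $2/3$.) The paper avoids this by reversing direction: it maps $\gR_{\bF_p} \to \gN_{\bF_p}$ (the commutative monoid $\bF_p$ in each positive rank), proves the \emph{relative} $E_\infty$-homology $H^{E_\infty}_{n,d}(f)$ vanishes for $d < \max(r(2p-3)+1, n+r(p-1)-1)$, and then runs a spectral sequence whose $E^1$-page is $H_{*,*}(\overline{\gR}_{\bF_p}/\sigma)$ tensored with the $W_\infty$-algebras on the relative cells, converging to the \emph{known} $H_{*,*}(\overline{\gN}_{\bF_p}/\sigma)$ (trivial in positive degree). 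Knowing the abutment is zero allows a minimal-counterexample argument to extract the vanishing of $H_{*,*}(\overline{\gR}_{\bF_p}/\sigma)$, after verifying that Dyer--Lashof operations preserve the slope-$1$ line. This reversal --- approximating $\gN$ relative to $\gR$ rather than $\gR$ absolutely --- is the key idea missing from your proposal.

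A smaller point: your claim that ``the $E_1$-homology of $\mathbf{R}$ is strictly larger'' because of Dyer--Lashof operations applied to $\sigma$ conflates $\gR_{\bF_p}$ with $\gE_\infty(S^{1,0}\sigma)$. In fact $H^{E_1}_{n,d}(\gR_{\bF_p})$ already vanishes for $d < n+r(p-1)-2$ (and $n \geq 2$) by the Steinberg computation; the passage to $E_\infty$ is needed for the cell-attachment spectral sequence to have the $W_\infty$-form that makes the bound tractable, not to ``quotient out'' extra $E_1$-classes.
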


The bound $n + r(p-1) - 2$ is linear of slope 1 as was Quillen's, but our offset $r(p-1)-2$ is strictly positive for $q > 4$.  Our methods also apply to the field $\bF_2$, where we obtain the following linear bound of slope $2/3$.

\begin{MainThm}\label{thm:main-f2} $H_{d}(\mr{GL}_{n}(\bF_2),\mr{GL}_{n-1}(\bF_2);\bF_2) = 0$ for $d < \frac{2(n-1)}{3}$.
\end{MainThm}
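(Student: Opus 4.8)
The plan is to reduce the statement to a connectivity estimate for a splitting complex of $\bF_2$-vector spaces, using the $E_1$-cellular approach to homological stability of \cite{e2cellsI}. Working in the category of functors from the groupoid of finite-dimensional $\bF_2$-vector spaces (and their isomorphisms) to chain complexes over $\bF_2$, direct sum of vector spaces and block sum of automorphisms make the assignment $\bF_2^n\mapsto C_*(B\mr{GL}_n(\bF_2);\bF_2)$ into an $E_\infty$-algebra $\mathbf{R}$, bigraded by the dimension $n$ and the homological degree $d$. The standard inclusion $\bF_2^{n-1}\hookrightarrow\bF_2^n$ is multiplication by a class $\sigma\in H_{1,0}(\mathbf{R})$, and for the cofibre $\mathbf{R}/\sigma$ of $\sigma\cdot(-)$ one has $H_{n,d}(\mathbf{R}/\sigma)\cong H_d(\mr{GL}_n(\bF_2),\mr{GL}_{n-1}(\bF_2);\bF_2)$, so the theorem is the assertion that $H_{n,d}(\mathbf{R}/\sigma)=0$ for $d<\tfrac23(n-1)$. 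By the results of \cite{e2cellsI} it is enough to exhibit an $E_1$-cell structure on $\mathbf{R}$ consisting of the single weight-one $0$-cell carrying $\sigma$ together with cells only in bidegrees $(n,d)$ with $n\ge2$ and $d\ge\tfrac23(n-1)$; equivalently, to show $H^{E_1}_{n,d}(\mathbf{R};\bF_2)=0$ for $n\ge2$ and $d<\tfrac23(n-1)$.

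The link to Steinberg modules is that $H^{E_1}_{n,d}(\mathbf{R})$ is, up to a shift of homological degree, the $\mr{GL}_n(\bF_2)$-equivariant homology of the \emph{$E_1$-splitting complex} of $\bF_2^n$ --- the semisimplicial set built from ordered direct-sum decompositions of $\bF_2^n$ into nonzero proper subspaces, whose top reduced homology is the $E_1$-split Steinberg module $\steonerel{\bF_2^n}{0}$. Over a field with more than two elements the analogous complex is Cohen--Macaulay of dimension $n-2$, hence $(n-3)$-connected, which is what ultimately yields the slope-$1$ bound of Theorem~\ref{thm:main-fp}; the crucial new phenomenon for $\bF_2$ is that this complex is only about $\tfrac23 n$-connected, and it is precisely this weaker connectivity that forces --- and exactly accounts for --- the slope $\tfrac23$. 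Filtering the splitting complex by skeleta and breaking each set of simplices into $\mr{GL}_n(\bF_2)$-orbits, whose stabilisers are products $\prod_i \mr{GL}(V_i)$, produces a spectral sequence assembled from the homology of products of smaller general linear groups and converging to the equivariant homology in question; feeding in a slope-$\tfrac23$ connectivity estimate for the $\bF_2$-splitting complexes --- and for their relative variants $\steonerel{\bF_2^m}{\bF_2^j}$, which is what lets such an estimate be proved by induction on $n$ --- then gives the theorem.

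The connectivity estimate is the step I expect to be the main obstacle. The Solomon--Tits/Charney-style arguments that make such complexes $(m-3)$-connected over larger fields proceed by contracting onto the star of a fixed line, but over $\bF_2$ complements are scarce --- two distinct lines span a plane containing only one further line --- so a fixed line is almost never transverse to the summands of a given decomposition, and the contraction breaks down. I would instead cone off carefully chosen subspaces of small dimension rather than lines and control the defect of the resulting partial contraction, most plausibly via an explicit discrete Morse function on the splitting complex or a delicate induction over the link of a summand of minimal dimension; keeping track of the cycles that survive in degree $\approx\tfrac23 m$ pins down the exact slope and shows that it cannot be improved. Once the connectivity estimate is in place, assembling the orbit spectral sequence, running the induction on $n$, and transporting the resulting vanishing of $H^{E_1}_{*,*}(\mathbf{R})$ back through \cite{e2cellsI} are routine.
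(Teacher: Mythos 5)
Your setup is fine: the identification $H_{n,d}(\overline{\gR}_{\bF_2}/\sigma)\cong H_d(\mr{GL}_n(\bF_2),\mr{GL}_{n-1}(\bF_2);\bF_2)$ and the reduction to showing this vanishes for $d<\tfrac{2}{3}(n-1)$ match the paper, and you correctly recognise that $E_1$-homology of $\gR_{\bF_2}$ is (up to shift) the Steinberg-coefficient homology. But the central idea of your proposal is based on a false premise, and the reduction you state is both automatic and insufficient.

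First, the connectivity of the $E_1$-splitting complex over $\bF_2$ is \emph{not} worse than over other fields. Charney's theorem, stated for Dedekind domains, already gives that $S^{E_1}(\bF_2^n)$ is $(n-3)$-connected (a wedge of $(n-2)$-spheres); the paper uses this unchanged. There is no ``$\tfrac{2}{3}n$-connectivity phenomenon'' over $\bF_2$, and the slope $\tfrac{2}{3}$ does not arise from a connectivity estimate. Relatedly, the $E_1$-homology vanishing you propose to prove is vacuous: since $H^{E_1}_{n,d}(\gR_{\bF_2})\cong H_{d-n+1}(\mr{GL}_n(\bF_2);\mr{St}^{E_1}(\bF_2^n)\otimes\bF_2)$ and the right side is trivially zero whenever $d<n-1$, the statement ``$H^{E_1}_{n,d}=0$ for $d<\tfrac{2}{3}(n-1)$'' is already implied by the degree shift alone, and all it buys is a CW structure with cells of bidegree $(n,d)$, $d\geq n-1$ — a slope-$1$ vanishing line with offset $-1$, which is no better than Maazen. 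The actual obstacle over $\bF_2$ is the opposite of what you describe: the problem is not weak connectivity of the complex, but the failure of the \emph{coinvariants} $H_0(\mr{GL}_n(\bF_2);\mr{St}^{E_1}(\bF_2^n)\otimes\bF_2)$ to vanish (this is what makes $q\neq 2$ essential in Theorem~\ref{thm:main-fp}: the $\bF_p$-acyclicity of the line stabiliser $\bF_q^\times$ used in Lemma~\ref{lem:finite-rel-steinberg} fails for $q=2$). So the $E_1$-homology genuinely has cells on the line $d=n-1$, and no sharper $E_1$-vanishing line is available.

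The paper's route is therefore quite different. It forgoes any further vanishing for Steinberg-module homology, and instead exploits specific low-rank information: the computation $H_{3,1}(\gR_{\bF_2})=0$, i.e.\ the relation $\sigma Q^1(\sigma)=0$. This relation is encoded by building a custom $E_\infty$-algebra $\gA=\gE_\infty(S^{1,0}_{\bF_2}\sigma)\cup^{E_\infty}_{\sigma Q^1(\sigma)} D^{3,2}_{\bF_2}\tau$ mapping to $\gR_{\bF_2}$. Low-rank computations (Lemma~\ref{lem:f2-low-rank}) plus Charney's $(n-3)$-connectivity then give that $H^{E_\infty}_{n,d}(f)=0$ for $d<n$ if $n\leq 3$ and $d<n-1$ if $n>3$, so $\gR_{\bF_2}$ is obtained from $\gA$ by attaching $E_\infty$-cells on or above the slope-$1$ line. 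The slope $\tfrac{2}{3}$ is then extracted by computing the $E^3$-page of the cell-attachment spectral sequence for $\overline{\gA}/\sigma$: the differential $d^2$ generated by $\tau\mapsto Q^1(\sigma)^3$ (in the quotient by $\sigma$) cancels most of the $\bF_2[Q^1(\sigma),\tau]$-part, leaving only classes on or above the slope-$\tfrac{2}{3}$ line. In other words, the slope $\tfrac{2}{3}$ is a feature of the algebra generated by $\sigma$, $Q^1(\sigma)$ and $\tau$ subject to $d^1(\tau)=\sigma Q^1(\sigma)$ — not of the connectivity of any complex. Pursuing a connectivity proof, as you propose, would at best recover what Charney already proved and could not produce the theorem.
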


Combining this with Quillen's calculation \eqref{eq:QuillenVanishing} (or using \cref{cor.stabilisation-vanishing} when $q=2$), we obtain the following vanishing theorem.

\begin{MainCor}\label{cor:vanishing}
If $q = p^r \neq 2$, then $\widetilde{H}_d(\mr{GL}_n(\bF_q);\bF_p)=0$ for $d+1 < n+r(p-1)-1$. If $q = 2$, then $\widetilde{H}_d(\mr{GL}_n(\bF_2);\bF_2)=0$ for $d+1< \frac{2n}{3}$.
\end{MainCor}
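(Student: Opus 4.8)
The plan is to deduce this from Theorems~\ref{thm:main-fp} and~\ref{thm:main-f2} together with Quillen's stable vanishing result~\eqref{eq:QuillenVanishing} (or Corollary~\ref{cor.stabilization-vanishing} when $q=2$), via the usual passage from relative homological stability to vanishing of absolute homology. Set $f(n) = n + r(p-1) - 2$ when $q = p^r \neq 2$ and $f(n) = \frac{2(n-1)}{3}$ when $q = 2$, so that Theorems~\ref{thm:main-fp} and~\ref{thm:main-f2} say precisely that $H_d(\mr{GL}_n(\bF_q), \mr{GL}_{n-1}(\bF_q); \bF_p) = 0$ for $d < f(n)$; note that $f$ is non-decreasing. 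First I would extract from the long exact sequence of the pair $(\mr{GL}_n(\bF_q), \mr{GL}_{n-1}(\bF_q))$ the two standard consequences: the stabilisation map $s_* \colon H_d(\mr{GL}_{n-1}(\bF_q); \bF_p) \to H_d(\mr{GL}_n(\bF_q); \bF_p)$ is surjective when $d < f(n)$ (the relative $H_d$ vanishes) and injective when $d + 1 < f(n)$ (the relative $H_{d+1}$ vanishes).

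Now fix $d$ and $n$ with $d + 1 < f(n+1)$. For every $m \geq n$ we then have $d + 1 < f(n+1) \leq f(m+1)$, so applying the two consequences above to the pair $(\mr{GL}_{m+1}(\bF_q), \mr{GL}_m(\bF_q))$ shows that $s_* \colon H_d(\mr{GL}_m(\bF_q); \bF_p) \to H_d(\mr{GL}_{m+1}(\bF_q); \bF_p)$ is an isomorphism. Since homology commutes with the filtered colimit $\mr{GL}_\infty(\bF_q) = \colim_m \mr{GL}_m(\bF_q)$, passing to the colimit over $m \geq n$ identifies $H_d(\mr{GL}_n(\bF_q); \bF_p)$ with $H_d(\mr{GL}_\infty(\bF_q); \bF_p)$, which is $\bF_p$ in degree $0$ and trivial otherwise by~\eqref{eq:QuillenVanishing} (or Corollary~\ref{cor.stabilization-vanishing} when $q=2$); hence $\tilde H_d(\mr{GL}_n(\bF_q); \bF_p) = 0$. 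It remains to rewrite the hypothesis: for $q \neq 2$ one has $d + 1 < f(n+1) = (n+1) + r(p-1) - 2 = n + r(p-1) - 1$, and for $q = 2$ one has $d + 1 < f(n+1) = \frac{2n}{3}$, which are exactly the asserted ranges.

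The proof is purely formal once Theorems~\ref{thm:main-fp} and~\ref{thm:main-f2} and Quillen's computation are in hand, so there is no genuine obstacle; the only thing to be careful about is the index bookkeeping, in particular that the range is controlled by $f(n+1)$ rather than $f(n)$, because injectivity of the very first stabilisation map out of $\mr{GL}_n(\bF_q)$ already requires the vanishing of $H_{d+1}$ of the pair $(\mr{GL}_{n+1}(\bF_q), \mr{GL}_n(\bF_q))$.
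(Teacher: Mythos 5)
Your proposal is correct and is exactly the standard argument the paper invokes (the paper does not spell out a proof, merely noting that the corollary follows by ``combining'' Theorems~\ref{thm:main-fp} and~\ref{thm:main-f2} with Quillen's stable vanishing or Corollary~\ref{cor.stabilization-vanishing}). Your bookkeeping is right: $d+1 < f(n+1)$ with $f$ non-decreasing gives isomorphisms $H_d(\mr{GL}_m) \to H_d(\mr{GL}_{m+1})$ for all $m \geq n$, and the colimit vanishes in positive degrees, yielding precisely the stated ranges.
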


\subsection{A question of Milgram and Priddy}
In \cite{milgram-priddy}, Milgram and Priddy constructed a cohomology class $\det_n \in H^n(M_{n,n}(\bF_2);\bF_2)$, where $M_{n,n}(\bF_2) \subset \mr{GL}_{2n}(\bF_2)$ is the subgroup of $(2n \times 2n)$-matrices of the form
\[\begin{bmatrix}
\mr{id}_n & * \\
0 & \mr{id}_n
\end{bmatrix},\]
which is invariant for the natural action of ${\mr{GL}_n(\bF_2) \times \mr{GL}_n(\bF_2)}$
as the subgroup of block-diagonal matrices. They suggested (see also \cite[Section 5]{priddy-problemsession}) that this class may be in the image under the restriction map
\begin{equation*}
  H^*(\mr{GL}_{2n}(\bF_2);\bF_2) \lra H^*(M_{n,n}(\bF_2);\bF_2)^{\mr{GL}_n(\bF_2) \times \mr{GL}_n(\bF_2)}.
\end{equation*}
By Maazen's result it would then be the lowest-degree such class.  They showed that
this is indeed the case for $\det_1$ and $\det_2$. However \cref{cor:vanishing} implies that $H_n(\mr{GL}_{2n}(\bF_2);\bF_2)=0$ for $n > 3$, so that $\det_n$ \emph{cannot} be in the image of the restriction map for $n>3$.  In \cref{sec:sharpnessf2} we combine our techniques with a recent computation of Szymik to prove that $\det_3$ does lie in the image of the restriction map. This completely answers the question of Milgram and Priddy.

\subsection{Homology of Steinberg modules} The reduced top integral homology of the Tits building associated to $\bF_q^n$ is equal to the Steinberg module $\mr{St}(\bF_q^n)$. It has an action of $\mr{GL}_n(\bF_q)$, and in the modular representation theory of $\mr{GL}_n(\bF_q)$ a central role is played by the $\bF_p[\mr{GL}_n(\bF_q)]$-module $\mr{St}(\bF_q^n) \otimes \bF_p$, as it is the only module which is both irreducible and projective. This implies that $H_*(\mr{GL}_n(\bF_q) ; \mr{St}(\bF_q^n) \otimes \bF_p)=0$. Combining the methods of this paper with Quillen's calculation of the cohomology of $\mr{GL}_n(\bF_q)$ away from the defining characteristic, we are also able to calculate the groups $H_*(\mr{GL}_n(\bF_q) ; \mr{St}(\bF_q^n) \otimes \bF_\ell)$ for $\ell \neq p$. We give this calculation in \cref{sec:homology-steinberg}.

\subsection*{Acknowledgments}

AK and SG were supported by the European Research Council (ERC) under the European Union's Horizon 2020 research and innovation programme (grant agreement No.\ 682922).  SG was also supported by NSF grant DMS-1405001, by the EliteForsk Prize, and by the Danish National Research Foundation through the Copenhagen Centre for Geometry and Topology (DNRF151). AK was also supported by the Danish National Research Foundation through the Centre for Symmetry and Deformation (DNRF92) and by NSF grant DMS-1803766. ORW was partially supported by EPSRC grant EP/M027783/1, and partially supported by the ERC under the European Union's Horizon 2020 research and innovation programme (grant agreement No.\ 756444), and by a Philip Leverhulme Prize from the Leverhulme Trust.

\tableofcontents

\section{Recollections on a homology theory for $E_\infty$-algebras}\label{sec:recollections}

In this section we informally explain that part of the theory developed in \cite{e2cellsIv3} which is necessary to prove Theorems \ref{thm:main-fp} and \ref{thm:main-f2}. We refer to that paper for a more formal discussion as well as for proofs; we shall refer to things labelled $X$ in \cite{e2cellsIv3} as $E_k$.$X$ throughout this paper. The reader may find it helpful to consult Sections 3 and 5 of \cite{e2cellsII}, which contain applications of this theory to mapping class groups that are similar in technique to the applications in this paper.

The aforementioned theory concerns CW approximation for $E_\infty$-algebras. Because we are interested mainly in the \emph{homology} of certain $E_\infty$-algebras, we will work in the category of \emph{simplicial $\bk$-modules}. In this paper $\bk$ shall always be a field, and we will also make this simplifying assumption in this section. In the case that $\bk = \bQ$, an $E_\infty$-algebra in simplicial $\bk$-modules is equivalent to the data of a commutative algebra in non-negatively graded chain complexes over $\bQ$. 

In order to eventually keep track of the dimension $n$ of the vector space $\bF^n$ we shall work with an additional $\bN$-grading (for us $\bN$ always includes $0$) which we call \emph{rank}, and so work in the category $\cat{sMod}^\bN_\bk$ of $\bN$-graded simplicial $\bk$-modules.  Morphisms in this category are weak equivalences if they are weak equivalences of underlying $\bN$-graded simplicial sets, and the category is equipped with the symmetric monoidal structure given by Day convolution. That is, an object consists of a sequence of simplicial $\bk$-modules $M_\bullet(n)$ for $n \in \bN$, and the tensor product of two such objects is given by
\[(M \otimes N)_p(n) = \bigoplus_{a+b=n} M_p(a) \otimes_\bk M_p(b).\]

A \emph{non-unital $E_\infty$-operad} in topological spaces is an operad whose space of $n$-ary operations is a contractible space with free $\fS_n$-action, with the exception of the space of $0$-ary operations, which is empty. An example of such an operad is provided by taking the colimit as $k \to \infty$ of the non-unital little $k$-cubes operads. Taking $\bk[\mr{Sing}(-)]$ we obtain an operad in simplicial $\bk$-modules, which we make $\bN$-graded by concentrating it in degree $0$. We denote the resulting operad by $\cC_\infty$. An \emph{$E_\infty$-algebra} in this paper shall usually mean an algebra over this operad in the category $\cat{sMod}^\bN_\bk$.

The module of \emph{indecomposables} of an $E_\infty$-algebra $\gR$ in $\cat{sMod}^\bN_\bk$ is defined using the exact sequence of $\bN$-graded simplicial $\bk$-modules
\[\bigoplus_{n \geq 2} \cC_\infty(n) \otimes \gR^{\otimes n} \lra \gR \lra Q^{E_\infty}(\gR) \lra 0\]
with the left map induced by the structure maps of $\gR$ as an $E_\infty$-algebra. This construction is not homotopy invariant, and has a derived functor $Q^{E_\infty}_\bL$. (In the notation of \cite{e2cellsIv3} we have actually defined the relative indecomposables, but because $\cC_\infty(1) \simeq *$ at the level of derived functors there is no difference between this and the absolute indecomposables, cf.\ equation (11.3) in Section $E_k$.11.3.) The \emph{$E_\infty$-homology groups} are then defined as $H^{E_\infty}_{n,d}(\gR) \coloneqq H_d(Q^{E_\infty}_\bL(\gR)(n))$; in simplicial $\bk$-modules, homology means taking the homology of the associated chain complex, or equivalently taking homotopy groups.

Its role is explained by Theorem $E_k$.11.21, which says that $E_\infty$-homology determines the cells needed to build a CW approximation to $\gR$ in $\cat{Alg}_{E_\infty}(\cat{sMod}_\bk^\bN)$. Let us explain what this means. We write $\partial D^{n,d} \in \cat{sMod}_\bk^\bN$ for the object which is 0 when evaluated at any $k \neq n$, and is the simplicial $\bk$-module $\bk[\partial \Delta^d]$ when evaluated at $n$; we write $D^{n,d}$ for the analogous construction using $\bk[\Delta^d]$, so there is an inclusion $\partial D^{n,d} \subset D^{n,d}$. A \emph{cellular $E_\infty$-algebra} is one obtained by iterated cell attachments, the data for which is a map of simplicial $\bk$-modules $\bk[\partial \Delta^d] \to \gR(n)$, which by adjunction gives a morphism $\partial D^{n,d}_\bk \to \gR$ in $\cat{sMod}_\bk^\bN$. Letting $\gE_\infty(-)$ denote the free $E_\infty$-algebra construction, this in turn gives a map $\gE_\infty(\partial D^{n,d}_\bk) \to \gR$ of $E_\infty$-algebras. The inclusion $\partial D^{n,d} \subset D^{n,d}$ induces the left map in the following diagram, and to attach a cell we form the pushout in $\cat{Alg}_{E_\infty}(\cat{sMod}^\bN_\bk)$
\[\begin{tikzcd}\gE_\infty(\partial D^{n,d}_\bk) \rar \dar & \gR \dar \\
\gE_\infty(D^{n,d}_\bk) \rar & \gR \cup^{E_\infty} \gD^{g,d}.\end{tikzcd}\]
A \emph{cellular approximation} to $\gR$ is a weak equivalence $\gC \overset{\sim}\to \gR$ from a cellular $E_\infty$-algebra. If $\gR(0) \simeq 0$ and $H_{n,d}^{E_\infty}(\gR)=0$ for $d < f(n)$, then it follows that $\gR$ admits a cellular approximation built only using cells of bidegrees $(n,d)$ such that $d \geq f(n)$.  This type of statement shall be called a \emph{vanishing line} for $E_\infty$ homology. In fact rather than just cellular, $\gC$ can be taken to be CW, meaning that it comes with a filtration where the $d$th stage is obtained from the $(d-1)$st by attaching $d$-cells. Such an approximation can be used for homology computations by taking the spectral sequence associated to this skeletal filtration, or using iterated cell attachment spectral sequences as in Section $E_k$.10.3.

\vspace{.5em}

To prove Theorems \ref{thm:main-fp} and \ref{thm:main-f2}, we shall apply these ideas to a certain $E_\infty$-algebra $\gR_{\bF_p}$ with $\gR_{\bF_p}(n) \simeq \bF_p[B\mr{GL}_n(\bF_q)]$. This is a simplicial $\bF_p$-module whose homology groups are the $\bF_p$-homology of $\mr{GL}_n(\bF_q)$.

We first consider $\gR_{\bF_p}$ as an
$E_1$-algebra, and prove a vanishing result for the $E_1$-homology
groups $H^{E_1}_{n,d}(\gR_{\bF_p})$, defined analogously to the
$E_\infty$-homology groups.  This is done by first studying
$\mr{GL}_n(\bF_q)$-equivariant semi-simplicial sets
$S^{E_1}(\bF^n_q)$, first introduced by Charney, who called them
``split buildings'' and who proved they had the homotopy type of a
wedge of $(n-2)$-spheres.  Then the homology groups of $Q^{E_1}_\bL(\gR_{\bF_p})$ may be calculated as those of the homotopy
orbit space $S^{E_1}(\bF^n_q)\moddd\mr{GL}_n(\bF_q)$, and the
vanishing line for $E_1$-homology is established by proving the
vanishing of $H_*(\mr{GL}_n(\bF_q);H_{n-2}(S^{E_1}(\bF^n_q);\bF_p))$
in a range of degrees. The resulting vanishing line for $E_1$-homology implies the same for $E_\infty$-homology, using bar spectral sequences as in Theorem $E_k$.14.1.

By combining such vanishing results with calculations in low degrees and low rank, we can produce an $E_\infty$-algebra with the same homological stability behavior as $\gR_{\bF_p}$ in a range. For $q = p^r \neq 2$ this $E_\infty$-algebra is the commutative algebra $\bN$ and $\gR_{\bF_p}$ maps to it. For $q = 2$  this is a custom-built $E_\infty$-algebra $\gA$ which maps to $\gR_{\bF_2}$.

\subsection{Glossary and some notation from \cite{e2cellsIv3}}
\label{sec:notat-other-recoll}

For the reader's convenience we collect some notation from op.\ cit., but we refer there for more details. This is borrowed from \cite{e2cellsIV}.

\begin{enumerate}[$\bullet$]
	\item Most of our arguments take place in the categories $\cat{sSet}^\bN$, $\cat{sSet}^\bN_*$, and $\cat{sMod}_\bk^\bN$, of functors from $\bN$ (regarded as a category with only identity morphisms) to the category of simplicial sets, pointed simplicial sets, and simplicial $\bk$-modules.
	\item Given an algebra $\gR \in \Alg_{E_\infty}(\cat{sSet}^\bN)$ and a commutative ring $\bk$, we denote by $\gR_\bk \in \Alg_{E_\infty}(\cat{sMod}_\bk^\bN)$ the $\bk$-linearisation of $\gR$.
	\item Objects $X \in \cat{sMod}_\bk^\bN$ have homology groups which are bigraded $\bk$-modules, defined by $H_{n,d}(X) = \pi_d(X(n))$.  
	\item The unitalisation of an $E_k$-algebra $\gR$ is denoted $\gR^+$, see Section $E_k$.4.4. By ``$E_k$-algebra'' we always mean the non-unital type, unless otherwise specified.  
	\item If $\gR$ is an $E_1$-algebra, then $\overline{\gR}$ denotes a unital and associative algebra naturally weakly equivalent to $\gR^+$ as a unital $E_1$-algebra, see Proposition $E_k$.12.9 and the construction preceding it. If $\gR$ is an $E_k$-algebra, we regard it is an $E_1$-algebra before applying these constructions.
	\item Filtered objects in a category $\cat{C}$ are functors $\bZ_{\leq} \to \cat{C}$, where $\bZ_{\leq}$ denotes the category with object set $\bZ$ and morphism set $m \to n$ either a singleton or empty, depending on whether $m \leq n$ or $m > n$.  We often consider filtered objects in functor categories such as $\cat{C} = \cat{sMod}_\bk^\bN$, in which case filtered objects are functors $\bN \times \bZ_{\leq} \to \cat{sMod}_\bk$.
	\item To an unfiltered object $X$ is associated a filtered object $a_* X$ for each $a \in \bZ$, making the functor $X \mapsto a_* X$ left adjoint to evaluation at the object $a \in \bZ_{\leq}$.  Explicitly, $(a_* X)(n)$ is $X$ for $n \geq a$ and is the initial object for $n < a$.  See Section $E_k.5.3.2$.
	\item There is a spectral sequence associated to a filtered object $X \in \cat{sMod}_\bk^{\bN \times \bZ_{\leq}}$.  In our grading conventions, see Theorem $E_k$.10.10, it has
	\begin{equation*}
		E^1_{n,p,q} = H_{n,p+q}(X(p),X(p-1)), \quad
		d^r \colon  E^r_{n,p,q} \to E^r_{n,p-r,q+r-1},
	\end{equation*}
	and if it conditionally converges, it does so to $H_{n,p+q}(\mr{colim}\,X)$. This does not relate the different $n$ at all (disregarding any multiplicative structures) so may be regarded as one spectral sequence for each $n \in \bN$. 
\end{enumerate}

\section{General linear groups as an $E_\infty$-algebra} \label{sec:einfty-algebra}

\begin{notation}In the paper we shall use $\mr{GL}(V)$ to denote the linear isomorphisms of a vector space $V$, and shall when appropriate write $\mr{GL}(\bF^n)$ for $\mr{GL}_n(\bF)$.\end{notation}

\subsection{Construction of $\gR$} 
Fix a field $\bF$, which need not be finite yet. We define a groupoid $\cat{V}_\bF$ with objects the non-negative integers, and morphisms given by
\[\cat{V}_\bF(n,m) = \begin{cases} \mr{GL}_n(\bF) & \text{if $n=m$,} \\
\varnothing & \text{otherwise.}\end{cases}\]

This is a skeleton of the category $\cat{P}_\bF$ with objects finitely dimensional $\bF$-vector spaces and morphisms linear isomorphisms, which has a symmetric monoidal structure given by direct sum. Let us describe the corresponding symmetric monoidal structure on this skeleton. On objects the monoidal product is given by $n \oplus m = n+m$ and on morphisms by block sum of matrices, as a homomorphism $\mr{GL}_n(\bF) \times \mr{GL}_m(\bF) \to \mr{GL}_{n+m}(\bF)$. This admits a symmetry given by the identity on objects, and on morphisms by conjugation with the block $(n+m) \times(n+m)$-matrix
\[T_{n,m} \coloneqq \begin{bmatrix} 0 & \mr{id}_n \\
\mr{id}_m & 0 \end{bmatrix},\]
that is, $A \mapsto T_{n,m}^{-1} A T_{n,m}$. This category has several useful properties.

\begin{lemma}\label{lem:vect-cat-props} The symmetric monoidal groupoid $(\cat{V}_\bF, \oplus, 0)$ has the following properties:
	\begin{enumerate}[(i)]
		\item for the monoidal unit $0$ we have $\mr{GL}_0(\bF) = \{e\}$, 
		\item the homomorphism $ -\oplus - \colon \mr{GL}_n(\bF) \times \mr{GL}_m(\bF) \to \mr{GL}_{n+m}(\bF)$ induced by the monoidal structure is injective,
		\item there is a single object in each isomorphism class, 
		\item the monoidal structure is strictly associative.
	\end{enumerate}
\end{lemma}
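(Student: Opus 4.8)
The plan is to verify each of the four properties directly from the explicit description of $(\cat{V}_\bF, \oplus, 0)$ given above; all four are essentially bookkeeping, and the only subtlety to keep in mind is that the structural constraints are being asserted to hold as genuine equalities, not merely up to coherent isomorphism.

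First I would dispatch (i) and (ii). For (i), $\mr{GL}_0(\bF)$ is the automorphism group of the zero vector space, whose only endomorphism is the identity, so $\mr{GL}_0(\bF) = \{e\}$; and this object is the monoidal unit by construction. For (ii), the block-sum homomorphism sends a pair $(A,B) \in \mr{GL}_n(\bF)\times\mr{GL}_m(\bF)$ to the block-diagonal matrix $\left[\begin{smallmatrix} A & 0 \\ 0 & B \end{smallmatrix}\right]$; if this equals $\mr{id}_{n+m}$, then comparing the two diagonal blocks forces $A = \mr{id}_n$ and $B = \mr{id}_m$, so the kernel is trivial and the homomorphism is injective.

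Next I would handle (iii) and (iv). For (iii), the objects of $\cat{V}_\bF$ are by definition the natural numbers, and there is a morphism $n \to m$ only when $n = m$, so each isomorphism class of objects is the singleton $\{n\}$ and in particular contains a single object. For (iv), on objects $\oplus$ is addition of natural numbers, which is strictly associative with $0$ a strict two-sided unit; on morphisms $\oplus$ is block sum of matrices, and
\[ A \oplus (B \oplus C) = \begin{bmatrix} A & 0 & 0 \\ 0 & B & 0 \\ 0 & 0 & C \end{bmatrix} = (A \oplus B) \oplus C, \]
while the identity of $\mr{GL}_0(\bF) = \{e\}$ is a strict two-sided unit for block sum, so strict associativity (and strict unitality) holds on the nose.

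The only thing resembling an obstacle is a matter of discipline: one should carry out these verifications at the level of the chosen skeleton $\cat{V}_\bF$ and the explicit matrix conventions fixed above, rather than transporting facts across the equivalence with $\cat{P}_\bF$, since the direct-sum structure on $\cat{P}_\bF$ is only associative and unital up to natural isomorphism. The point of the lemma is exactly that the monoidal structure transported along these conventions becomes strict; once that viewpoint is adopted, each of the checks above is immediate.
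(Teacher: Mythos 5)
Your proof is correct, and each of the four verifications is exactly as one would do it. The paper states Lemma~\ref{lem:vect-cat-props} without proof, taking all four properties to be immediate from the explicit description of $\cat{V}_\bF$ as a skeleton with objects $\bN$, morphisms $\mr{GL}_n(\bF)$, and block-sum monoidal structure; your write-up simply makes that implicit reasoning explicit, including the appropriate caution that one checks strictness on the chosen skeleton rather than on the equivalent category $\cat{P}_\bF$.
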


Consider the category $\cat{sSet}^{\cat{V}_\bF}$ of functors $\cat{V}_\bF \to \cat{sSet}$. This is simplicially enriched and inherits a symmetric monoidal structure via Day convolution. As consequence, for any simplicial operad $\cO$ there is a category $\cat{Alg}_\cO(\cat{sSet}^{\cat{V}_\bF})$ of $\cO$-algebras in $\cat{sSet}^{\cat{V}_\bF}$. Furthermore, $\cat{sSet}^{\cat{V}_\bF}$ admits a projective model structure, making it into a simplicially enriched symmetric monoidal model category. If $\cO$ is $\Sigma$-cofibrant, we may transfer this to obtain a projective model structure on $\cat{Alg}_\cO(\cat{sSet}^{\cat{V}_\bF})$ with the property that the forgetful functor $\cat{Alg}_\cO(\cat{sSet}^{\cat{V}_\bF}) \to \cat{sSet}^{\cat{V}_\bF}$ preserves cofibrations in addition to fibrations and weak equivalences.

In particular, we may take $\cO = \cC_\infty \coloneqq \mr{colim}_{k \to \infty} \, \cC_k$, where $\cC_k$ denotes the non-unital little $k$-cubes operad. There is a non-unital commutative monoid $\underline{\ast}_{>0}$ in $\cat{sSet}^{\cat{V}_\bF}$ given by
\[\underline{\ast}_{>0}(n) = \begin{cases}
\varnothing &\text{if $n=0$,}\\
* & \text{if $n > 0$.}
\end{cases}\]
This is in particular a non-unital $E_\infty$-algebra. As in Section $E_k$.17.1, by cofibrantly replacing $\underline{\ast}_{>0} \in\Alg_{E_\infty}(\cat{sSet}^\cat{\cat{V}_\bF})$ we construct a cofibrant non-unital $E_\infty$-algebra $\gT$ such that\[\gT(n) \simeq \begin{cases}
\varnothing & \text{ if $n=0$,}\\
\ast & \text{ if $n>0$.}
\end{cases}\]
Associating to an $\bF$-vector space its dimension gives a symmetric monoidal functor $r \colon \cat{V}_\bF \to \bN$. Left Kan extension provides a left adjoint $r_* \colon \cat{sSet}^{\cat{V}_\bF} \to \cat{sSet}^{\bN}$ to the restriction functor $r^* \colon \cat{sSet}^{\bN} \to \cat{sSet}^{\cat{V}_\bF}$, and this sends $E_\infty$-algebras to $E_\infty$-algebras so may also be considered as functor  $r_* \colon \cat{Alg}_{E_\infty}(\cat{sSet}^{\cat{V}_\bF}) \to \cat{Alg}_{E_\infty}(\cat{sSet}^{\bN})$. As such it is the left adjoint in a Quillen adjunction, and hence $r_*(\gT) \simeq \bL r_*(\underline{\ast}_{>0})$. We shall denote this $E_\infty$-algebra by $\gR$. Since the forgetful functor preserves cofibrant objects, we get
\[\gR(n) \simeq \begin{cases} \varnothing & \text{if $n=0$,} \\
B\mr{GL}_n(\bF) & \text{if $n>0$.}\end{cases}\]
This is the promised $E_\infty$-algebra structure on an $\bN$-graded simplicial set weakly equivalent to $\bigsqcup_{n \geq 1} B\mr{GL}(\bF^n)$. Compare to the case of Dedekind domains considered in Section $E_k$.18.2.

For any commutative ring $\bk$, we have that $H_*(\gR(n);\bk) \cong H_*(\mr{GL}_n(\bF);\bk)$ for $n >0$. Since we are interested only in these homology groups, there is no loss in passing from simplicial sets to simplicial $\bk$-modules and focusing our attention on $\gR_\bk \coloneqq \bk[\gR] \in \cat{Alg}_{E_\infty}(\cat{sMod}_\bk^\bN)$.

\subsection{Tits buildings and relative Tits buildings}

Though $\gR_\bk$ is an $E_\infty$-algebra, the map $\cC_1 \to \cC_\infty$ allows us to consider it as an $E_1$-algebra instead. As such it has $E_1$-homology, which may be computed in terms of the $E_1$-splitting complex of $\gR_\bk$ as in Section $E_k$.17.2. We will study these $E_1$-splitting complexes using Tits buildings and their relative versions, whose definitions we recall in this section.

\begin{definition}Let $P$ be a finite-dimensional $\bF$-vector space. The \emph{Tits building $\cT(P)$} is the poset with elements given by subspaces $0 \subsetneq V \subsetneq P$ and partial order given by $V \leq V'$ if $V \subseteq V'$.

We let $T(P)$ denote the topological space obtained by taking the thin geometric realisation of the nerve of $\cT(P)$.\end{definition}

See \cite[Section 1.2]{EbertRWSx} for background on (thin or thick) geometric realisations. The Solomon--Tits theorem says that $T(P)$ is homotopy equivalent to a wedge of $(\dim(P)-2)$-spheres \cite{solomon}. Again the only possibly non-vanishing reduced homology group is $\widetilde{H}_{\dim(P)-2}$, a $\bZ[\mr{GL}(P)]$-module that is free as an abelian group.

\begin{definition}The \emph{Steinberg module of $P$} is the $\bZ[\mr{GL}(P)]$-module 
\[\mr{St}(P) \coloneqq \widetilde{H}_{\dim P-2}(T(P)).\]
\end{definition}

In order to make certain inductive arguments later, we will also need relative versions of the Tits building. These are subposets of the Tits building consisting of subspaces complementary to a fixed subspace $W$.

\begin{definition}\label{def:rel-tits-building} Let $P$ be a finite-dimensional $\bF$-vector space and $W \subset P$ be a subspace. Then the \emph{relative Tits building $\titsrel{P}{W}$} is the poset with elements given by non-zero proper subspaces $V$ of $P$ such that $V \cap W = \{0\}$ and partial order given by $V \leq V'$ if $V \subseteq V'$.
	
We let $\trel{P}{W}$ denote the topological space obtained by taking the thin geometric realisation of the nerve of $\titsrel{P}{W}$.	
\end{definition}

In \cref{lem:rel-tits-connectivity}, we will prove that $\trel{P}{W}$ is homotopy equivalent to a wedge of $(\dim P-\dim W-1)$-spheres when $W$ is a non-trivial proper subspace. It admits an action of the subgroup 
\[\mr{GL}(P, \text{pres $W$}) \leq\mr{GL}(P)\]
consisting of linear isomorphisms of $P$ which preserve $W$ as a subspace (but do not necessarily fix it pointwise).

\begin{definition}
The \emph{Steinberg module of $P$ relative to $W$} is the $\bZ[\mr{GL}(P,\text{pres $W$})]$-module 
\[\strel{P}{W} \coloneqq \widetilde{H}_{\dim P-\dim W-1}(\trel{P}{W}).\]
\end{definition}

The following is the ``linear dual'' of the poset $\titsrel{P}{W}$ as in \cref{def:rel-tits-building}.

\begin{definition}\label{def:rel-tits-dual} Let $P$ be a finite-dimensional $\bF$-vector space and $W \subset P$ be a subspace. Then the \emph{dual relative Tits building $\titsdualrel{P}{W}$} is the poset with elements given by non-zero proper subspaces $V$ of $P$ such that $V+W = P$ and partial order given by $V \leq V'$ if $V \subseteq V'$.

We let $\tdualrel{P}{W}$ denote the topological space obtained by taking the thin geometric realisation of the nerve of $\titsdualrel{P}{W}$.\end{definition}

If $W = P$ this is equal to the Tits building $\cT(P)$ and has a $\mr{GL}(P)$-action. For proper non-trivial $W$, it has an action of the subgroup $\mr{GL}(P,\text{ pres $W$})$ of $\mr{GL}(P)$ consisting of linear isomorphisms of $P$ which preserve $W$.

By dualizing we can identify it with a relative Tits building. Letting $W^\circ \subset V^\vee$ denote the linear subspace of linear forms which are identically $0$ on $W$, there are mutually inverse functors
\begin{align*}\titsrel{P}{W} &\overset{\cong}\longleftrightarrow \titsdualrel{P^\vee}{W^\circ} \\
V & \longmapsto V^\vee \\
U^\vee & \longmapsfrom U,\end{align*}
where the bottom map uses the canonical identification of the double dual $(P^{\vee})^\vee$ with $P$. These identifications are equivariant for the action of $\mr{GL}(P,\text{ pres $W$})$ on the left hand side and the action of $\mr{GL}(P^\vee,\text{ pres $W^\circ$})$ on the right side, upon identifying these groups by dualizing.

One can reduce the study of relative Tits buildings to the study of ordinary Tits buildings and those relative Tits buildings where $W \subset P$ is a line. This is due to Quillen and appears in his unpublished notebooks. We give a version of his proof below, and a similar argument can be found in \cite[\S 3.1]{SprehnWahl}:

\begin{lemma}[Quillen] \label{lem:p-decomposition} Let $P$ be a $\bF$-vector space of finite dimension $\geq 2$ and $0 \subsetneq W \subsetneq P$ be a non-zero proper subspace. Let $L$ be a one-dimensional subspace of $W$. Then there is a homotopy equivalence
	\begin{equation} \label{eqn:quillen-p-decomposition} \tdualrel{P}{W} \simeq \tdualrel{P/L}{W/L} \ast \tdualrel{P}{L},\end{equation}
	equivariant for the action of the subgroup $\mr{GL}(P,\text{ pres $L$, pres $W$}) \leq \mr{GL}(P,\text{ pres $W$})$ of those linear maps which also preserve the subspace $L$.
\end{lemma}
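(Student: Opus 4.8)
The plan is to exhibit the homotopy equivalence \eqref{eqn:quillen-p-decomposition} by a poset-theoretic argument in the style of Quillen's Theorem~A, working with the dual relative Tits buildings directly rather than via the dualization to ordinary relative buildings. First I would set up notation: write $\cP = \titsdualrel{P}{W}$, and observe that an object is a subspace $V$ with $0 \subsetneq V \subsetneq P$ and $V + W = P$. Fix the line $L \subseteq W$. The key is to decompose the join on the right-hand side: a simplex of $\tdualrel{P/L}{W/L} \ast \tdualrel{P}{L}$ is a pair consisting of a (possibly empty) flag in $\titsdualrel{P/L}{W/L}$ and a (possibly empty) flag in $\titsdualrel{P}{L}$. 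I would build a map of posets (or rather a zig-zag of functors, or a single functor after passing to a suitable subdivision) relating $\cP$ to this join. The natural candidate sends a subspace $V \in \cP$ either to its image $\bar V = (V+L)/L$ in $P/L$ — which satisfies $\bar V + W/L = P/L$, so lies in $\titsdualrel{P/L}{W/L}$ unless $\bar V$ is all of $P/L$ or zero — or, in the complementary case where $V \cap L = 0$ but $V + L$ is ``small'', records $V$ itself as an element of $\titsdualrel{P}{L}$ (note $V + L$ need not equal $P$, so one must be careful about which piece of the join $V$ lands in).

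Concretely I would stratify $\cP$ according to the position of $V$ relative to $L$: the subposet $\cP_{\geq L}$ of those $V$ containing $L$, and its complement. On $\cP_{\geq L}$ the quotient map $V \mapsto V/L$ identifies $\cP_{\geq L}$ with $\titsdualrel{P/L}{W/L}$ together with the extra top-ish elements; on the complement, intersecting with or adding $L$ gives control. The cleanest implementation is probably to use the standard fact (Quillen's fiber lemma / the ``poset join'' lemma, e.g.\ as in Quillen's \emph{Homotopy properties of the poset of nontrivial $p$-subgroups}) that if a poset $\cX$ maps to the join $\cY_1 \ast \cY_2$ and the fibers over each simplex are contractible, then the map is a homotopy equivalence. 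So the steps are: (1) define the comparison map carefully, checking it is well-defined and $\mr{GL}(P,\text{pres }L,\text{pres }W)$-equivariant; (2) identify the fibers of this map over simplices of the join; (3) show each such fiber is contractible, typically because it is itself the order complex of a poset with an initial or terminal object, or a poset that visibly cones off (e.g.\ by $V \mapsto V + L$ or $V \mapsto V \cap (\text{something})$ being a monotone contraction); (4) invoke the fiber lemma to conclude.

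The main obstacle I expect is step~(3): verifying that the fibers are contractible requires a genuine linear-algebra case analysis, since the condition ``$V + W = P$'' is not preserved by the obvious contracting maps — for instance $V \mapsto V \cap L'$ for a fixed complement $L'$ of $L$ will generally destroy the spanning condition, and $V \mapsto V + L$ can overshoot. One must instead find contractions adapted to each stratum, and check that they stay inside the dual relative building; the borderline cases (where $\dim V$ is extremal, or where $V+L = P$ already) are where the bookkeeping is delicate. A secondary point to get right is the base cases of the implicit induction hidden in ``$\dim P \geq 2$'': when $\dim W = 1$ one has $W = L$ and the statement degenerates (the first join factor involves $P/L$ with $W/L = 0$), and when $\dim(P/L) = 1$ similar degeneracies occur, so the join formula must be interpreted with the convention that $\titsdualrel{P'}{W'}$ with $W' = P'$ is the full Tits building and that a join with $S^{-1} = \varnothing$ is the identity. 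Once the fibers are handled, equivariance is automatic from the naturality of all constructions in the subspace lattice, and the homotopy equivalence in \eqref{eqn:quillen-p-decomposition} follows.
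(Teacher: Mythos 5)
Your overall strategy — build a poset map $f \colon \titsdualrel{P}{W} \to \titsdualrel{P/L}{W/L} \ast \titsdualrel{P}{L}$ and apply Quillen's fiber theorem — can in fact be made to work, and it differs in packaging from the paper, whose proof is a direct pushout/mapping-cone argument: the paper identifies the subposet $\cL_1 = \{V : V + L \neq P\}$, shows it deformation retracts onto $\cL_0 = \{V : L \subset V\} \cong \titsdualrel{P/L}{W/L}$ via $V \mapsto V + L$, and then observes that the whole poset is obtained from $\cL_1$ by coning off the missing vertices (exactly the hyperplanes $H$ complementary to $L$, i.e.\ the vertices of $\titsdualrel{P}{L}$) along links that are homotopy equivalent to $\cL_1$ itself, which gives the join. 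To make your version precise: the map is $f(V) = (V+L)/L$ when $V+L \neq P$, and $f(V) = V$ when $V+L = P$ (these are the hyperplane complements to $L$), with the join ordered so that $\titsdualrel{P/L}{W/L}$ sits below $\titsdualrel{P}{L}$. The fiber $f/\bar U$ over $\bar U \in \titsdualrel{P/L}{W/L}$ is $\{V : V \subseteq U\}$ where $U$ is the preimage of $\bar U$ in $P$; this has maximum $U$, hence is contractible, which is the easy part.

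The genuine gap is in the fiber over a hyperplane $H \in \titsdualrel{P}{L}$. You will find that $f/H = \{H\} \cup \cL_1$, whose realization is the mapping cone of the inclusion $\mr{Link}_{\cL_1}(H) = \{V \in \cL_1 : V \subsetneq H\} \hookrightarrow \cL_1$. Its contractibility is therefore \emph{equivalent} to that inclusion being a homotopy equivalence, which is precisely the paper's key lemma and is proved there via the map $V \mapsto (V+L)\cap H$ together with the zigzag $V \leq V+L \geq (V+L)\cap H$. This is not a poset contraction of $f/H$ at all — $f/H$ has no initial or terminal object and cannot be coned off by any monotone map — so your expectation that the fibers will be handled by "contractions adapted to each stratum'' using maps like $V \mapsto V + L$ or $V \mapsto V \cap L'$ is wrong for this fiber; both of those maps are indeed used, but as the two legs of a zigzag exhibiting a homotopy equivalence between two non-contractible spaces, not as a contraction. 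Unless you supply that link-equivalence argument (and verify the well-definedness case analysis, e.g.\ that $(V+L)\cap H$ is neither $0$ nor $H$ and still satisfies the complement-to-$L$ condition), the fiber lemma does not apply and the proof does not close. So while the Theorem-A framing is a legitimate alternative, it does not avoid the real work, and your sketch leaves exactly that work undone while misdiagnosing its nature.
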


\begin{proof}
To prove this homotopy equivalence, we define a pair of subposets: (i) $\cL_0 \subset \titsdualrel{P}{W}$ is the full subposet consisting of those $V$ such that $L \subset V$, (ii) $\cL_1 \subset \titsdualrel{P}{W}$ is the full subposet consisting of those $V$ such that $V+L \neq P$. We write $L_i$ for the thin geometric realisation of the nerve of $\cL_i$.
	
The poset $\cL_0$ is a subposet of $\cL_1$ and there is deformation retraction $L_1 \to L_0$ induced by the relation $V \leq V + L$, regarded as a natural transformation from the identity functor on the poset $\cL_0$ regarded as a category, to the functor $V \mapsto V + L$.  Hence
$L_0 \simeq L_1$.
Similarly, there are mutually inverse functors
	\begin{align*}\cL_0 &\overset{\cong}{\longleftrightarrow} \titsdualrel{P/L}{W/L} \\
	V&\longmapsto V/L \\
	p^{-1}(V/L) &\longmapsfrom V/L,\end{align*}
	where $p \colon P \to P/L$ is the projection map. These exhibit $\cL_0$ and $\titsdualrel{P/L}{W/L}$ as posets which are isomorphic, equivariantly for the action of $\mr{GL}(P,\text{ pres $L$, pres $W$})$.
	
	The poset $\cL_1$ is a full subposet of $\tdualrel{P}{W}$ and the only missing elements are the rank $(m-1)$ subspaces $H$ satisfying $H+L = P$, i.e.\ the complements to $L$. If we define subposets $\mr{Link}_{\cL_1}(H) \subset \cL_1$ to be given by those $0 \subsetneq V \subsetneq H$ such that $V+L \neq P$, then $\tdualrel{P}{W}$ is obtained from $L_1$ by attaching cones on the geometric realisations $|\mr{Link}_{\cL_1}(H)|$ of their nerves. That is, there is a pushout diagram
	\[\begin{tikzcd}\underset{H\, \mid H+L=P}\bigsqcup \dar |\mr{Link}_{\cL_1}(H)| \rar \dar & L_1 \dar \\
	\underset{H\, \mid H+L=P}\bigsqcup \mr{Cone}(|\mr{Link}_{\cL_1}(H)|) \rar & \tdualrel{P}{W}.\end{tikzcd}\] 
	
	We claim that the inclusion $\mr{Link}_{\cL_1}(H) \hookrightarrow \cL_1$ is a homotopy equivalence. Its homotopy inverse is given by $V \mapsto (V+L) \cap H$. To see this is well-defined, we need to check that (a) $(V + L) \cap H$ can be neither $0$ nor $H$, and (b) $(V+L) \cap H + L \neq P$. For (a), if $(V+L) \cap H = 0$, then $V+L$ must have rank 1 and hence $V = L$, but then $V+W = P$ can not hold, as $W \subsetneq P$. Similarly, if $(V + L) \cap H = H$, then $L \subset V + L \subset H$ and $H+L \neq P$. For (b), if $(V + L) \cap H+L$ has dimension $\dim(P)$, then $(V+L) \cap H$ has dimension $\dim(P)-1$, hence must equal $H$ and we saw above this can not happen. The homotopy is given by the zigzag $V \leq V + L \geq (V + L) \cap H$.
	
	Because the attaching maps are homotopy equivalences, the pushout $\tdualrel{P}{W}$ is homotopy equivalent to the join of $L_1$ with the set $\{H \mid H+L = P\}$. The first of these is homotopy equivalent to $\tdualrel{P/L}{W/L}$ and the second is isomorphic to $\tdualrel{P}{L}$, so their join is homotopy equivalent to $\tdualrel{P/L}{W/L} \ast \tdualrel{P}{L}$ as required.\end{proof}

By induction on $\dim W$ using the fact that $\titsdualrel{P}{L}$ is non-empty as long as $\dim(P) \geq 2$, we obtain the following result, which was also obtained by Vogtmann \cite[Proposition 1.1]{vogtmannspherical}:

\begin{lemma}\label{lem:rel-tits-connectivity} Let $P$ be a $\bF$-vector space of finite dimension $\geq 2$ and $0 \subsetneq W \subsetneq P$ be a non-zero proper subspace, then $\tdualrel{P}{W}$ is $(\dim W-1)$-spherical.\end{lemma}

\subsection{$E_1$-splitting complexes} Using Remark $E_k$.17.11 
and the fact that the symmetric monoidal groupoid $\cat{V}_\bF$ satisfies the conditions in \cref{lem:vect-cat-props}, the $E_1$-splitting complex of $\gR_\bk$ can be described in terms of Young-type subgroups. The spaces $S^{E_1}(P)$ obtained this way were originally invented by Charney in analogy with the Tits building; in the Tits building one considers proper subspaces rather than proper splittings of a vector space.

\begin{definition}\label{def:e1splitting}
	Let $P$ be a finite-dimensional $\bF$-vector space. The \emph{$E_1$-splitting complex $S^{E_1}_\bullet(P)$} is the semisimplicial set with $p$-simplices given by ordered collections $(V_0,\ldots,V_{p+1})$ of non-zero proper subspaces of $P$, such that the natural map $V_0 \oplus \ldots \oplus V_{p+1} \to P$ induced by the inclusions is an isomorphism. The face maps $d_i$ takes the sum of the $i$th and $(i+1)$st term.
	
	We let $S^{E_1}(P)$ denote the topological space obtained by taking the thick geometric realisation of $S^{E_1}_\bullet(P)$.
\end{definition}

Charney proved that $S^{E_1}(P)$ is a wedge of $(\dim(P)-2)$-dimensional spheres \cite[Theorem 1.1]{Charney}. Thus the only possibly non-vanishing reduced homology group is $\widetilde{H}_{\dim(P)-2}$, which is a $\bZ[\mr{GL}(P)]$-module that is free as an abelian group.

\begin{definition}
	The \emph{$E_1$-Steinberg module of $P$} is the $\bZ[\mr{GL}(P)]$-module 
	\[\mr{St}^{E_1}(P) \coloneqq \widetilde{H}_{\dim(P)-2}(S^{E_1}(P)).\]
\end{definition}

As explained in \cref{sec:recollections}, the $E_1$-homology groups $H^{E_1}_{n,d}(\gR_\bk)$ are the homology groups of the derived $E_1$-indecomposables of $\gR_\bk$, and by Corollary $E_k$.17.5, Lemma $E_k$.17.10, and Remark $E_k$.17.11, these may be described in terms of the homotopy orbit space $S^{E_1}(\bF^n) \moddd \mr{GL}_n(\bF)$. Charney's connectivity result for $S^{E_1}(\bF^n)$ then implies that the homology groups of $\mr{GL}_n(\bF)$ with coefficients in $\mr{St}^{E_1}(\bF^n)$ coincide with $E_1$-homology groups of $\gR_\bk$ up to a shift, c.f.~the formula following Definition $E_k$.17.6:
\begin{equation}\label{eqn:e1homology-ste1}
	H^{E_1}_{n,d}(\gR_\bk) \cong H_{d-n+1}(\mr{GL}_n(\bF);\mr{St}^{E_1}(\bF^n) \otimes \bk).
\end{equation}
It is useful to think of the semi-simplicial set $S^{E_1}_\bullet(P)$ in terms of the following poset.

\begin{definition}Let $P$ be a finite-dimensional $\bF$-vector space. The \emph{split building $\cS^{E_1}(P)$} is the poset with objects given by ordered pairs $(V_0,V_1)$ of non-zero proper subspaces of $P$ such that $V_0 \cap V_1 = \{0\}$ and $V_0 \oplus V_1 \to P$ is an isomorphism. The partial order is given by $(V_0,V_1) \leq (V'_0,V'_1)$ if $V_0$ is a subspace of $V'_0$ and $V'_1$ is a subspace of $V_1$.
\end{definition}

We recognise the semi-simplicial set $S^{E_1}_\bullet(P)$ as the non-degenerate simplices of the nerve $N_\bullet \cS^{E_1}(P)$, so the thick geometric realisation of $S^{E_1}_\bullet(P)$ is homeomorphic to the thin geometric realisation of $N_\bullet \cS^{E_1}(P)$.

\section{Computing $E_1$-homology of general linear groups} \label{sec:e1-splitting-generic}

In this section we describe a method to prove that the $E_1$-splitting complex of a finite-dimensional vector space is highly-connected and that homology with coefficients in the $E_1$-Steinberg module vanishes in a range. The connectivity result is due to Charney \cite{Charney}, who proves it more generally for Dedekind domains. In fact, we follow her proof strategy but additionally keep track of various homology groups with coefficients.

\subsection{Poset techniques}\label{sec:posets} Recall that the $E_1$-splitting complex $S^{E_1}(\bF^n)$ of \cref{def:e1splitting} can be described as the geometric realisation of the nerve of the poset $\cS^{E_1}(\bF^n)$. There are well-developed techniques to prove the connectivity of posets and maps between them (that is, upon taking geometric realisation of the nerves).

Recall that if $(\cY, \leq)$ is a poset and $y \in \cY$ then $\cY_{< y} \coloneqq \{y' \in \cY \setminus \{y\} \mid y' \leq y\}$ with the induced ordering, and similarly for $\cY_{> y}$.  If $f \colon \cX \to \cY$ is a map of posets then $f_{\leq y} \coloneqq \{x \in \cX \mid f(x) \leq y\}$ and similarly for $f_{\geq y}$. We shall use the following slight generalization of \cite[Theorem 9.1]{quillenposet}, which can be obtained using the mapping cone device of \cite[\S 2]{LvdK}. This has the assumption that $\cY$ admits a bounded height function \cite[p.~201]{LvdK}; a sufficient condition for this is that $\cY_{\leq y}$ is finite-dimensional for each $y$. This will be case for all examples considered in this paper.

\begin{theorem}\label{thm:nerve-spherical} Let $f \colon \cX \to \cY$ be a map of posets, $n \in \bZ$, and $t_\cY \colon \cY \to \bZ$ be a function. Assume that 
\begin{enumerate}[(i)]
		\item $\cY$ is $n$-spherical and admits a bounded height function, 
		\item for every $y \in \cY$, $f_{\leq y}$ is $(n-t_\cY(y))$-spherical, and
		\item $\cY_{>y}$ is $(t_\cY(y)-1)$-spherical. \end{enumerate}
Then $\cX$ is $n$-spherical and there is a canonical filtration
	\[0 = F_{n+1} \subset F_n \subset \cdots \subset F_{-1} = \widetilde{H}_n(\cX)\]
	such that 
	\begin{align*}F_{-1}/F_0 &\cong \widetilde{H}_n(\cY), \\
	F_{q}/F_{q+1} &\cong \bigoplus_{t_\cY(y) = n-q} \widetilde{H}_{n-q-1}(\cY_{>y}) \otimes \widetilde{H}_{q}(f_{\leq y}).\end{align*}
\end{theorem}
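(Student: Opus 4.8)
The statement is a relative/filtered version of Quillen's Theorem A--type result for posets, and the natural approach is to mimic the proof of \cite[Theorem 9.1]{quillenposet} while keeping track of the filtration on homology. Concretely, I would work with the homotopy colimit / Grothendieck construction of the functor $\cY \to \cat{Top}$ sending $y \mapsto |f_{\leq y}|$; call it $E$. There is a natural map $|\cX| \to E$ (sending a simplex $x_0 < \cdots < x_p$ to the obvious simplex over $f(x_0) \leq \cdots \leq f(x_p)$), and the key first step is to show this map is a homotopy equivalence: over each $y$, the fibre is $|f_{\leq y}|$ and one checks the standard cofinality/quasi-fibration condition (this is where the hypothesis on $f_{\leq y}$ enters in the form needed to compare fibres, and where one uses that $f$ is a map of posets so the fibres vary functorially). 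So the first step replaces $|\cX|$ by $E = |\cY| \text{-indexed homotopy colimit of the } |f_{\leq y}|$.

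Next I would run the spectral sequence (or skeletal filtration) of this homotopy colimit. Filtering $|\cY|$ by skeleta — or better, filtering by the function $t_\cY$ — gives a filtration $F_\bullet$ of $\tilde H_*(E) \cong \tilde H_*(\cX)$. The associated graded is computed by the homology of $\cY$ with coefficients in the local system $y \mapsto \tilde H_*(f_{\leq y})$; because each $|f_{\leq y}|$ is $(n - t_\cY(y))$-spherical by hypothesis (ii), this local system is concentrated in a single degree depending on $y$, namely $n - t_\cY(y)$. That collapses the spectral sequence onto a single "diagonal": the contribution of the stratum $\{y : t_\cY(y) = n-q\}$ lands in total degree $q + (n-q) = n$ (confirming $\cX$ is $n$-spherical, once one also invokes hypotheses (i) and (iii) to see nothing lies outside degree $n$), and the stratum's contribution to $F_q/F_{q+1}$ is $\tilde H_{n-q-1}(\cY_{>y}) \otimes \tilde H_q(f_{\leq y})$ summed over those $y$. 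The shift $\tilde H_{n-q-1}(\cY_{>y})$ rather than $\tilde H_{*}(\cY_{\geq y})$ appears because attaching the star of $y$ to the lower strata is, up to homotopy, coning off $|\cY_{>y}| \ast |f_{\leq y}|$, i.e.\ a relative cell whose boundary is that join; the join formula $\tilde H_*(A \ast B) \cong \bigoplus \tilde H_i(A) \otimes \tilde H_j(B)$ with $i+j = *-1$ produces exactly the stated tensor product. The top piece $F_{-1}/F_0$ is the $q = n$ term, where $f_{\leq y}$ contributes only $\tilde H_0$ and $\cY_{>y}$ terms assemble into $\tilde H_n(\cY)$ — this identification is essentially the statement that when the fibres are $0$-spherical the homotopy colimit spectral sequence degenerates to ordinary homology of the base, which is where hypothesis (i) is used directly.

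The main obstacle is \emph{not} the spectral-sequence bookkeeping but making the filtration \emph{canonical} and checking the spherical conclusions cleanly: one must verify that filtering by $t_\cY$ (which need not be monotone or compatible with the face relations of $|\cY|$ in any naive way) actually produces a filtration of spaces, or alternatively reorganize the argument so that the filtration is induced from a genuinely monotone invariant and only the \emph{associated graded} is reindexed by $t_\cY$. In Quillen's original this subtlety is handled by a double-complex / "fibre-wise then base-wise" argument; I would follow op.\ cit.\ essentially verbatim, the only addition being to record the maps in the spectral sequence rather than discarding them, so that the filtration $F_\bullet$ on $\tilde H_n(\cX)$ and its subquotients are the ones named in the statement. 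Since the problem says the proof is "identical to op.\ cit.", I expect the write-up to consist mostly of citing \cite[Theorem 9.1]{quillenposet} and remarking that its proof manifestly yields the filtration.
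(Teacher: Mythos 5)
Your overall plan --- replace $|\cX|$ by the Grothendieck construction / homotopy colimit over $\cY$ of the $|f_{\leq y}|$, filter by $t_\cY$, and identify the associated graded via the join formula --- is the right one, and is essentially Quillen's argument in \cite[Theorem 9.1]{quillenposet}, which the paper simply cites. Two small caveats. First, your description of the top quotient $F_{-1}/F_0$ as ``the $q = n$ term, where $f_{\leq y}$ contributes only $\tilde{H}_0$'' has the indexing reversed: $q = n$ corresponds to $t_\cY(y) = 0$, hence to maximal $y$ (with $\cY_{>y}$ empty, $\tilde{H}_{-1}(\varnothing) = \bZ$, and $f_{\leq y}$ contributing $\tilde{H}_n$); it is at $q = 0$ that $f_{\leq y}$ is $0$-spherical. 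The quotient $F_{-1}/F_0 \cong \tilde{H}_n(\cY)$ is a separate, topmost layer, not one of the $q$-terms. Second, the monotonicity concern you flag is genuine and worth resolving rather than deferring: for the sets $\{y \in \cY : t_\cY(y) \geq c\}$ to be downward-closed subposets (so that the Grothendieck construction inherits a filtration by subcomplexes), one needs $t_\cY$ to be order-reversing. Quillen's version works with a height function built to be monotone, which under the relabelling $t_\cY = n - \ell$ is exactly this condition; the present statement omits it, but the $t_\cY$'s used in Propositions \ref{prop:first-reduction-general} and \ref{prop:second-reduction-general} ($\dim(U)-1$ on $\cT(\bF^n)^{\mathrm{op}}$, and $n - \dim V - w$ on $\titsrel{\bF^n}{\bF^w}$) are strictly order-reversing, so the applications go through.
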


Observe we do \emph{not} assume there is a bounded height function on $\cX$.

\subsection{The statement}

We find it clarifying to use the notation 
\[\cS^{E_1}(\sdash,\sdash|\bF^n) \coloneqq \cS^{E_1}(\bF^n)\]
for the split building, as this notation allows us to easily denote several variants.

\begin{definition}Let $W \subset \bF^n$ be a non-zero subspace. The \emph{relative split building} $\cS^{E_1}(\sdash,W \subset \sdash|\bF^n)$ is the subposet of $\cS^{E_1}(\bF^n)$ with objects those splittings $(V_0,V_1)$ such that $W \subset V_1$.

We let $S^{E_1}(\sdash,W \subset \sdash |\bF^n)$ denote the thin geometric realisation of its nerve.
\end{definition}

In the following theorem we prove that this is homotopy equivalent to a wedge of $(n-\dim W-1)$-spheres, and we write 
\[\steonerel{\bF^n}{W} \coloneqq \widetilde{H}_{n-\dim W-1}(S^{E_1}(\sdash,W \subset \sdash |\bF^n)).\]
This notation is in analogy with $\strel{\bF^n}{W}$, as the objects of $\cS^{E_1}(\sdash,W \subset \sdash |\bF^n)$ are pairs $(A,B)$ which in particular satisfy $A \cap W = \{0\}$. This has an action of the subgroup $\mr{GL}(P, \text{pres $W$}) \leq\mr{GL}(P)$ consisting of linear isomorphisms of $P$ which preserve $W$ as a subspace, and hence also of the subgroup 
\[\mr{GL}(P, \text{fix $W$}) \leq\mr{GL}(P, \text{pres $W$})\]
 consisting of linear isomorphisms of $P$ which fix $W$ pointwise.

\begin{theorem}\label{thm.steinbergcoinv}
Let $\bF$ be a field, let $W \subset \bF^n$ be a non-zero subspace and suppose $n \geq 1$. Then we have that:
\begin{align*}S^{E_1}(\sdash,\sdash|\bF^n) & \text{ is $(n-2)$-spherical,} \\
S^{E_1}(\sdash,W \subset \sdash|\bF^n) & \text{ is $(n-\dim W-1)$-spherical.}\end{align*}

Suppose further that $\bk$ and $c \in \bN$ are such that
\begin{align*}
H_i(\mr{GL}(\bF^n);\mr{St}(\bF^n) \otimes \bk) &= 0,\\
H_i(\mr{GL}(\bF^n,\text{ fix $W$});\strel{\bF^n}{W} \otimes \bk) &=0,
\end{align*}
for all $n \geq 2$ and all $i \leq c$. Then for all $n \geq 2$ and $i \leq c$ it is true that:
\begin{align*}H_i(\mr{GL}(\bF^n);\mr{St}^{E_1}(\bF^n) \otimes \bk) &=0, \\
H_i(\mr{GL}(\bF^n,\text{ fix $W$});\steonerel{\bF^n}{W}) \otimes \bk) &=0.\end{align*}\end{theorem}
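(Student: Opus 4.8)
I would prove the two assertions together, by induction on $n$, carrying the relative statement for all subspaces $0\subsetneq W\subsetneq\bF^n$ at each stage. The tool throughout is Theorem~\ref{thm:nerve-spherical}, applied to the order-preserving poset map
\[
p\colon\cS^{E_1}(\sdash,W\subset\sdash|\bF^n)\longrightarrow\titsrel{\bF^n}{W}^{\mathrm{op}},\qquad(V_0,V_1)\longmapsto V_0,
\]
and, in the absolute case, to $p\colon\cS^{E_1}(\bF^n)\to\cT(\bF^n)^{\mathrm{op}}$, $(V_0,V_1)\mapsto V_0$; here $V_0\cap W\subseteq V_0\cap V_1=0$, so $V_0$ genuinely lies in $\titsrel{\bF^n}{W}$, and passing to opposite posets is harmless on realizations. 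This follows Charney's route to the connectivity statement; the extra mileage from Theorem~\ref{thm:nerve-spherical} is that it simultaneously produces a canonical --- hence $\mr{GL}_n(\bF)$-equivariant --- filtration of the top homology group.

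For the sphericity one verifies hypotheses (i)--(iii) of Theorem~\ref{thm:nerve-spherical} with $t_\cY(V_0)\coloneqq\dim V_0-1$. Hypothesis (i) says $\trel{\bF^n}{W}$ is $(n-\dim W-1)$-spherical, which is Lemma~\ref{lem:rel-tits-connectivity} via the duality $\titsrel{\bF^n}{W}\cong\titsdualrel{(\bF^n)^\vee}{W^\circ}$ (and Solomon--Tits for $T(\bF^n)$). Hypothesis (iii) holds because $\cY_{>V_0}$ is the poset of subspaces $V$ with $0\subsetneq V\subsetneq V_0$ --- the condition $V\cap W=0$ being automatic there --- i.e.\ it is $\cT(V_0)$, which is $(\dim V_0-2)$-spherical. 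Hypothesis (ii) is the crux: the fibre $p_{\leq V_0}$ is the poset of splittings $(V_0',V_1')$ with $V_0\subseteq V_0'$ and $W\subseteq V_1'$, and the evident self-duality of split buildings (interchanging the two subspaces of a splitting) identifies it with a relative split building in which $V_0$ constrains the other coordinate. In the absolute case this is a relative split building of rank $n$, so one arranges the induction with the relative cases ahead of the absolute one; in the relative case it is a twice-relative split building, which after projecting out $V_0$ reduces, away from the degenerate stratum $V_0'=V_0$, to a relative split building of the proper quotient $\bF^n/V_0$. Either way the inductive hypothesis gives that $p_{\leq V_0}$ is $((n-\dim W-1)-t_\cY(V_0))$-spherical. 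Theorem~\ref{thm:nerve-spherical} now yields the asserted sphericity, together with a filtration of $\steonerel{\bF^n}{W}$ --- resp.\ $\mr{St}^{E_1}(\bF^n)$ --- whose top subquotient is $\strel{\bF^n}{W}$ (resp.\ $\mr{St}(\bF^n)$) and whose remaining subquotients are $\bigoplus_{V_0}\mr{St}(V_0)\otimes\tilde H_*(p_{\leq V_0})$, the sum over subspaces $V_0$ of a fixed dimension.

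For the vanishing one first makes the subquotients explicit as $\mr{GL}_n(\bF)$-modules: summing over a $\mr{GL}_n(\bF)$-orbit of subspaces $V_0$, each is induced from the parabolic $P=\mr{GL}(\bF^n,\text{ pres }V_0)$, and in the absolute case --- where $\tilde H_*(p_{\leq V_0})\cong\steonerel{\bF^n}{V_0}$ --- it equals $\mr{Ind}_P^{\mr{GL}_n(\bF)}\bigl(\mr{St}(V_0)\otimes\steonerel{\bF^n}{V_0}\bigr)$, with an analogous description (after a further Shapiro step) in the relative case, now producing relative $E_1$-Steinberg modules of $\bF^n/V_0$. Applying $H_*(\mr{GL}_n(\bF);-\otimes\bk)$ and the spectral sequence of the filtration, the top subquotient contributes $H_*(\mr{GL}_n(\bF);\mr{St}(\bF^n)\otimes\bk)$, resp.\ $H_*(\mr{GL}(\bF^n,\text{ fix }W);\strel{\bF^n}{W}\otimes\bk)$, which vanishes in degrees $\leq c$ by hypothesis. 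Each remaining subquotient contributes, by Shapiro's lemma, $H_*(P;\mr{St}(V_0)\otimes(\cdots)\otimes\bk)$, and the Lyndon--Hochschild--Serre spectral sequence of $1\to\mr{GL}(\bF^n,\text{ fix }V_0)\to P\to\mr{GL}(V_0)\to1$ --- on whose kernel $\mr{St}(V_0)$ acts trivially --- has $E^2$-terms $H_s\bigl(\mr{GL}(V_0);\mr{St}(V_0)\otimes H_t(\mr{GL}(\bF^n,\text{ fix }V_0);(\cdots)\otimes\bk)\bigr)$, whose inner homology group vanishes for $t\leq c$: for $\steonerel{\bF^n}{V_0}$ this is the relative statement at rank $n$, already proved at this stage, and the genuinely smaller pieces use smaller rank. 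Hence every subquotient contributes $0$ in total degree $\leq c$, and the filtration spectral sequence gives $H_d(\mr{GL}_n(\bF);\mr{St}^{E_1}(\bF^n)\otimes\bk)=0$, and the relative vanishing, for $d\leq c$. The finitely many low-rank configurations in which only $\mr{GL}_1$-factors appear are handled directly with the Tits and split buildings in ranks $\leq 2$, where the relevant permutation modules on splittings are projective and the computation reduces to that of tori.

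The hard part is the equivariant geometry of the fibres $p_{\leq V_0}$: one must understand the deformation retraction onto the non-degenerate stratum precisely enough both to see the shift producing the suspension or join and, more importantly, to pin down $\tilde H_*(p_{\leq V_0})$ as a module over the whole parabolic $P$ --- not merely over its Levi --- so that the Shapiro and Lyndon--Hochschild--Serre reductions really land on homology groups already known to vanish. Getting the induction scheme right (relative before absolute at each rank, and checking that projecting out $V_0$ genuinely lowers the rank) and isolating the small-rank base cases are the remaining points that need care rather than routine verification.
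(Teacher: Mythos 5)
Your high-level strategy matches the paper's: a double induction over $(n,\dim W)$, with the absolute case reduced to the relative cases at the same rank via Theorem~\ref{thm:nerve-spherical} applied to a projection of posets, the relative cases reduced to lower rank, and then Shapiro's lemma and a Lyndon--Hochschild--Serre spectral sequence for the vanishing. Your absolute-case reduction is essentially the paper's (the coordinate swap $(V_0,V_1)\mapsto(V_1,V_0)$ identifies your fibre with $\cS^{E_1}(\sdash,V_0\subset\sdash|\bF^n)$, as in Proposition~\ref{prop:first-reduction-general}). The gap is in the relative case, and it is not cosmetic: you project onto the $V_0$-coordinate \emph{targeting} $\titsrel{\bF^n}{W}^{\mathrm{op}}$ with $t_\cY(V_0)=\dim V_0 - 1$, so your fibre is $p_{\leq V_0}=\{(V_0',V_1')\mid V_0\subseteq V_0',\ W\subseteq V_1'\}$, a \emph{lower} bound on $V_0'$. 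You then claim this ``projects out $V_0$'' to a relative split building of $\bF^n/V_0$. But that relative split building is $(n-\dim V_0-\dim W-1)$-spherical, whereas hypothesis (ii) of Theorem~\ref{thm:nerve-spherical} with your $t_\cY$ requires $p_{\leq V_0}$ to be $(n-\dim W-\dim V_0)$-spherical --- off by one. Concretely, for $n=3$, $\dim W=\dim V_0=1$ over $\bF_2$ the fibre is a $4$-cycle ($1$-spherical), while $\cS^{E_1}(\sdash,\overline W\subset\sdash|\bF_2^2)$ is two points ($0$-spherical). The quotient map $(V_0',V_1')\mapsto(V_0'/V_0,(V_1'+V_0)/V_0)$ is also not injective (distinct complements $V_1'$ over a fixed $\overline{V_1'}$ have the same image), and there are \emph{two} degenerate strata ($V_0'=V_0$ and $V_1'=W$), not one; making this precise would require a join decomposition of Quillen's Lemma~\ref{lem:p-decomposition} type that your sketch does not supply.

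The paper (Proposition~\ref{prop:second-reduction-general}) makes the opposite choice: it targets $\titsrel{\bF^n}{\bF^w}$ (not its opposite) with $t_\cY(V)=n-\dim V-w$, so $\cY_{>V}\cong\titsrel{\bF^n/V}{(\bF^w+V)/V}$ and the fibre is $\{(A,B)\mid A\subseteq V,\ W\subseteq B\}$, an \emph{upper} bound on the free coordinate. This fibre is either contractible (when $V\oplus W=\bF^n$) or, via Charney's ``cutting-down'' isomorphism $\cS^{E_1}(\sdash\subset V,W\subset\sdash|\bF^n)\cong\cS^{E_1}(\sdash\subset V,\sdash|C)$ (Lemma~\ref{lem:cutting-down}, where $C$ is a complement of $W$ containing $V$) followed by linear duality, identified exactly with a relative split building $\cS^{E_1}(\sdash,V^\circ\subset\sdash|C^\vee)$ of rank $n-\dim W<n$; the dimension then comes out correctly as $\dim V-1$. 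The cutting-down lemma and the dualization step are the two ingredients missing from your sketch, and they are precisely what makes the fibre identification an isomorphism of posets at strictly smaller rank --- with the right sphericity --- rather than a non-injective ``projection'' plus an unspecified argument about degenerate strata.
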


\begin{remark}A Serre spectral argument shows $H_i(\mr{GL}(\bF^n,\text{ fix $W$});\steonerel{\bF^n}{W} \otimes \bk)=0$ for $i \leq c$ implies $H_i(\mr{GL}(\bF^n,\text{ pres $W$});\steonerel{\bF^n}{W} \otimes \bk)=0$ for $i \leq c$. Hence the former is a stronger statement.\end{remark}
	
It shall be helpful to name two facts that are used in the proof of \cref{thm.steinbergcoinv}, as well as the two hypotheses in the second part of  \cref{thm.steinbergcoinv}:
\begin{enumerate}[\indent (1)]
	\item \label{enum.titsspherical} The Tits building $\cT(\bF^n)$ is $(n-2)$-spherical.
	\item \label{enum.reltitsspherical} The relative Tits building $\titsrel{\bF^n}{W}$ is $(n-\dim W-1)$-spherical.
\end{enumerate}

\begin{enumerate}[\indent (1)$^\mr{St}$]
	\item \label{enum.steinbergcoinv} For $n \geq 2$ and $i \leq c$, $H_i(\mr{GL}(\bF^n);\mr{St}(\bF^n) \otimes \bk) = 0$.
	\item \label{enum.relsteinbergcoinv} For $n \geq 2$ and $i \leq c$, $H_i(\mr{GL}(\bF^n,\text{ fix $W$});\strel{\bF^n}{W} \otimes \bk)$.
\end{enumerate}

\noindent Without loss of generality we can change basis of $\bF^n$ so that $W=\bF^w \times \{0\}$, the span of the first $w$ basis vectors. The proof of \cref{thm.steinbergcoinv} will be an upwards induction over $(n,w)$ in lexicographic order, where we interpret $w=0$ as the absolute version.

\subsubsection{The base cases} The base cases are those with $n \leq 1$, in which case all statements are empty.

\subsubsection{The first reduction} 

\begin{proposition}\label{prop:first-reduction-general} 
The case $(n,0)$ is implied by the cases $\{(n,w) \, | \, w >0\}$.\end{proposition}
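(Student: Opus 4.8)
The goal is to deduce the absolute statements — that $S^{E_1}(\sdash,\sdash\mid\bF^n)$ is $(n-2)$-spherical, and that $H_i(\mr{GL}(\bF^n);\mr{St}^{E_1}(\bF^n)\otimes\bk)=0$ for $i\le c$ — from the corresponding relative statements for all positive $w$. The natural device is to relate the absolute split building to a relative one by choosing an auxiliary line. Concretely, I would fix a $1$-dimensional subspace $L=\bF\cdot e_1\subset\bF^n$ and compare $\cS^{E_1}(\sdash,\sdash\mid\bF^n)$ with $\cS^{E_1}(\sdash,L\subset\sdash\mid\bF^n)$, the latter being the $w=1$ relative case. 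The inclusion of the relative split building into the absolute one is $\mr{GL}(\bF^n,\text{fix }L)$-equivariant, and one expects it to be highly connected; more precisely I would run Theorem~\ref{thm:nerve-spherical} with a suitable map of posets to produce a filtration of $\tilde H_*$ of the absolute split building whose subquotients involve only relative split buildings and Tits buildings of lower rank, together with Steinberg-type coefficient groups that have already been handled by the inductive hypothesis on $(n,w)$ with $w>0$.

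**Key steps, in order.** First, identify the poset map to which Theorem~\ref{thm:nerve-spherical} applies: send a splitting $(V_0,V_1)\in\cS^{E_1}(\bF^n)$ to the subspace $V_1\in\cT(\bF^n)$ (or to $V_0$, whichever orientation makes the fibres come out as split buildings), with $\cY=\cT(\bF^n)$, which is $(n-2)$-spherical by fact~\eqref{enum.titsspherical}, and with $t_\cY$ chosen to be $\dim$ of the target subspace minus a constant so that $\cY_{>y}$, a Tits building of the appropriate quotient, has the stated sphericity. The fibre $f_{\le y}$ over a subspace $V_1$ of dimension $k$ should be identified — up to the usual join/deformation-retraction manipulations — with the split building $\cS^{E_1}(\sdash,\sdash\mid V_1)$ of rank $k$, or with a relative version thereof, and hence is spherical of the right dimension by induction on $n$. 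This yields the sphericity assertion of the proposition and an explicit description of $\tilde H_{n-2}(S^{E_1}(\bF^n))=\mr{St}^{E_1}(\bF^n)$ as an extension built from $\mr{St}(\bF^n)$ and tensor products $\mr{St}(\text{quotient})\otimes\mr{St}^{E_1}(\text{subspace})$ over subspaces of each dimension. Second, feed this filtration into group homology: the subquotients become, after a Shapiro-type identification of stabilisers of subspaces with groups of the form $\mr{GL}(\bF^k,\text{pres }W)$-type parabolics, homology groups with coefficients that are precisely the relative Steinberg modules $\strel{\bF^{n'}}{W'}$ with $\dim W'>0$ and $n'\le n$, and these vanish in degrees $\le c$ by hypotheses $(1)^{\mr{St}}$, $(2)^{\mr{St}}$ and the inductive hypothesis. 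Running the resulting spectral sequence then forces $H_i(\mr{GL}(\bF^n);\mr{St}^{E_1}(\bF^n)\otimes\bk)=0$ for $i\le c$.

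**Main obstacle.** The delicate point is the bookkeeping in the second step: identifying the stabiliser in $\mr{GL}(\bF^n)$ of a subspace $V$ appearing in the filtration, checking that the associated coefficient module really is (a direct sum of) relative Steinberg modules $\strel{\bF^{n'}}{W'}$ for the *fixed*-subspace group rather than only the *preserving*-subspace group, and confirming that the shifts in homological degree introduced by the filtration of Theorem~\ref{thm:nerve-spherical} do not push anything out of the range $i\le c$. The Remark following Theorem~\ref{thm.steinbergcoinv} — that the fix-$W$ vanishing implies the pres-$W$ vanishing via a Serre spectral sequence for $\mr{GL}(W)\to\mr{GL}(\bF^n,\text{pres }W)\to\mr{GL}(\bF^n,\text{fix }W)$ — is what lets the argument close, so I would invoke it to convert between the two flavours of stabiliser as needed. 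Everything else is a routine application of the poset machinery and the Shapiro lemma.
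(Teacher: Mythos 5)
Your ``key steps'' paragraph does hit the right architecture --- the poset map $(V_0,V_1)\mapsto V_1$ to (the opposite of) the Tits building, the function $t_\cY$, Theorem~\ref{thm:nerve-spherical}, the filtration spectral sequence, Shapiro's lemma, and a Serre spectral sequence to separate the parabolic --- and this is indeed what the paper does. However, there are two substantive errors that would break the argument, both stemming from the same confusion about what the fibre of the poset map is.

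First, the fibre $f_{\le U}$ over a proper nonzero subspace $U$ is \emph{not} the absolute split building $\cS^{E_1}(\sdash,\sdash\mid U)$ of the $k$-dimensional space $U$. It is the relative split building $\cS^{E_1}(\sdash,U\subset\sdash\mid\bF^n)$: one still splits the \emph{ambient} $\bF^n$, subject to $U\subset V_1$. Correspondingly, the sphericity of the fibre is supplied by the $(n,w)$ case with $w=\dim U>0$ --- which is precisely what the proposition hypothesizes --- and \emph{not} by ``induction on $n$.'' The rank does not drop here; only the relative dimension $w$ changes. This matters because the whole point of the proposition is a reduction from $(n,0)$ to $(n,w)$ with $w>0$, so appealing to a lower $n$ misstates the logic and would make the lexicographic induction circular in the companion second reduction.

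Second, and relatedly, the coefficient modules in the filtration subquotients are $\mr{St}(\bF^{n-q-1})\otimes\steonerel{\bF^n}{\bF^{n-q-1}}$, i.e.\ an ordinary Steinberg module of a smaller-rank quotient tensored with an \emph{$E_1$-relative} Steinberg module of the ambient $\bF^n$. You instead wrote that the coefficients are ordinary relative Steinberg modules $\strel{\bF^{n'}}{W'}$ and cite hypothesis $(2)^{\mr{St}}$; that hypothesis is the input for the \emph{second} reduction, not this one. Here the $\mr{St}(\bF^{n-q-1})$ factor is controlled by $(1)^{\mr{St}}$ after peeling off $\mr{GL}(\bF^{n-q-1})$ via the Serre spectral sequence for the parabolic extension, while the $\steonerel{\bF^n}{\bF^{n-q-1}}$ factor is handled by the $(n,w)$ case for $w>0$. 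Finally, the ``auxiliary line $L$'' idea in your opening paragraph is never developed and is not needed; the Tits-building projection does all the work, so I'd cut it rather than leave the impression that the argument needs a $w=1$ comparison. Once the fibre and the coefficient modules are corrected, the remaining bookkeeping (the choice $t_\cY(U)=\dim U-1$, the identifications of $\cY_{>U}\cong\cT(U)$, and the degree shifts) does go through essentially as you sketched.
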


\begin{proof}
Consider the map of posets
\begin{align*} \cX \coloneqq \cS^{E_1}(\sdash,\sdash|\bF^n) &\overset{f}\lra \cY \coloneqq \cT(\bF^n)^\mr{op} \\
(A,B) &\longmapsto B,
\end{align*}
and for $U \in \cY$ let $t_\cY(U) = \dim(U)-1$. We verify the assumptions of \cref{thm:nerve-spherical}: 
\begin{enumerate}[(i)]
	\item $\cY = \cT(\bF^n)^\mr{op}$ is $(n-2)$-spherical by (\ref{enum.titsspherical}).

	\item $f_{\leq U}$ is given by those $(A,B)$ such that $U \subset B$, i.e.\ $\cS^{E_1}(\sdash,U\subset \sdash|\bF^n)$. By the hypothesis this is $(n-\dim(U)-1) = (n-2-t_\cY(U))$-spherical. 

	\item $\cY_{>U}$ is given by $\cT(U)$ so is $(\dim(U)-2) = (t_\cY(U)-1)$-spherical by (\ref{enum.titsspherical}).
\end{enumerate}
\cref{thm:nerve-spherical} then implies that $\cX=\cS^{E_1}(\sdash,\sdash|\bF^n)$ is $(n-2)$-spherical, and that there is a filtration of the $\bk[\mr{GL}(\bF^n)]$-module $\mr{St}^{E_1}(\bF^n)$ with $F_{-1}/F_0 = \mr{St}(\bF^n) \otimes \bk$ and the other filtration quotients $F_q/F_{q+1}$ given by
\begin{align*}
&\bigoplus_{\substack{0 < U < \bF^n\\ \dim(U) = n-q-1}} \mr{St}(U) \otimes \widetilde{H}_{q}(S^{E_1}(\sdash,U \subset \sdash|\bF^n)) \otimes \bk\\
&\qquad\cong \mr{Ind}^{\mr{GL}(\bF^n)}_{\mr{GL}(\bF^n,\text{ pres $\bF^{n-q-1}$})} \left(\mr{St}(\bF^{n-q-1}) \otimes \steonerel{\bF^n}{\bF^{n-q-1}} \otimes \bk\right),
\end{align*}
where $\bF^{n-q-1} \subset \bF^n$ is spanned by the first $(n-q-1)$ basis vectors.
	
For any filtered $\bk[G]$-module $F_{-1} = M \supset F_0 \supset \cdots \supset F_n \supset 0$ there is a spectral sequence
\[E^1_{p,q} = H_{p+q}(G ; F_q/F_{q+1}) \Longrightarrow H_{p+q}(G; M),\]
which we apply to the above filtration of the $\bk[\mr{GL}(\bF^n)]$-module $\mr{St}^{E_1}(\bF^n)$. We can distinguish two different types of columns on the $E^1$-page. Firstly, for $q=-1$ we have $E^1_{p, -1} = H_{p-1}(\mr{GL}_m(\bF);\mr{St}(\bF^m) \otimes \bk)$ which vanishes for $p-1 \leq c$  and $n \geq 2$ by (\ref{enum.steinbergcoinv})$^\mr{St}$. Secondly, for $q\geq 0$ we can use Shapiro's lemma to identify 
\[E^1_{p,q} \cong H_{p+q}\left(\mr{GL}(\bF^n,\text{ pres $\bF^{n-q-1}$});\mr{St}(\bF^{n-q-1}) \otimes \steonerel{\bF^n}{\bF^{n-q-1}} \otimes \bk\right).\]

To prove that these vanish for $p+q \leq c$ we consider the extension
\[\mr{GL}(\bF^n, \text{ fix $\bF^{n-q-1}$}) \lra \mr{GL}(\bF^n, \text{pres $\bF^{n-q-1}$}) \lra \mr{GL}(\bF^{n-q-1})\]
and apply the Serre spectral sequence
\[\begin{tikzcd}
F^2_{s,t} =  H_s\left(\mr{GL}(\bF^{n-q-1});\parbox{8cm}{\centering$\mr{St}(\bF^{n-q-1}) \otimes$ \\[-3pt]
	$H_t\left[\mr{GL}(\bF^n, \text{ fix $\bF^{n-q-1}$}); \steonerel{\bF^n}{\bF^{n-q-1}} \otimes \bk\right]$}\right) \arrow[Rightarrow]{d} \\ H_{s+t}\left(\mr{GL}(\bF^n,\text{ pres $\bF^{n-q-1}$}) ; \mr{St}(\bF^{n-q-1}) \otimes \steonerel{\bF^n}{\bF^{n-q-1}} \otimes \bk\right).
\end{tikzcd}\]
Now $F^2_{s,t}=0$ for $t \leq c$ by inductive assumption (\ref{enum.relsteinbergcoinv})$^\mr{St}$ as $n \geq 2$, so the target vanishes for $s+t \leq c$, and the claim follows.
\end{proof}

\subsubsection{Cutting down} The following technique is due to Charney \cite[p.\ 4]{Charney}. Given $V$ and $W$ subspaces of $P$, let $\cS^{E_1}(\sdash \subset V,W \subset \sdash|P)$ be the subposet of $\cS^{E_1}(\sdash,\sdash|P)$ consisting of those splittings $(A,B)$ such that $A \subset V$ and $W \subset B$.

\begin{lemma}\label{lem:cutting-down} Let $V$ and $W$ be subspaces of $P$ such that $V \cap W = 0$ and $V \oplus W \neq P$. Let $C$ be a complement to $W$ in $P$ containing $V$. Then there is an isomorphism
	\[\cS^{E_1}(\sdash \subset V,W \subset \sdash|P) \cong \cS^{E_1}(\sdash \subset V,\sdash|C),\]
equivariantly for the subgroup of $\mr{GL}(P)$ preserving the subspaces $V$, $W$ and $C$.\end{lemma}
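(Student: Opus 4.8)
The plan is to exhibit explicit mutually inverse order-preserving maps between the two posets and then observe that they are equivariant. In one direction, send a splitting $(A,B)$ of $P$ with $A \subseteq V$ and $W \subseteq B$ to the pair $(A, B \cap C)$; in the other direction, send a splitting $(A',B')$ of $C$ with $A' \subseteq V$ to the pair $(A', B' \oplus W)$.

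First I would check these land in the intended posets. For the forward map, since $P = C \oplus W$ and $W \subseteq B$, every $c \in C$ may be written $c = a+b$ with $a \in A$ and $b \in B$, and then $b = c-a \in C$, so $b \in B \cap C$; as $A \cap (B \cap C) \subseteq A \cap B = 0$ this gives $A \oplus (B \cap C) = C$. It remains to see that $A$ and $B \cap C$ are \emph{nonzero proper} subspaces of $C$, and this is essentially the only place the hypotheses are used beyond $V \cap W = 0$ (which is what makes the existence of $C$ possible in the first place): since $C$ is a complement to $W$ containing $V$ and $V \oplus W \neq P$, we must have $V \subsetneq C$, hence $A \subseteq V \subsetneq C$, so $A$ is proper in $C$ and therefore $B \cap C \neq 0$, while $A \neq 0$ forces $B \cap C \subsetneq C$. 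For the backward map, $A' \oplus (B' \oplus W) = (A' \oplus B') \oplus W = C \oplus W = P$, and the nonzero-and-proper conditions follow similarly from $A' \subseteq C \subsetneq P$ and $W \neq 0$.

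Next I would check that the two maps are mutually inverse. The composite $(A,B) \mapsto (A, (B\cap C) \oplus W)$ is the identity because $(B\cap C)\oplus W = B$: the inclusion ``$\supseteq$'' comes from writing $b \in B$ as $b = c + w$ with $c \in C$ and $w \in W \subseteq B$, which forces $c \in B \cap C$, and directness of the sum uses $W \cap C = 0$. The other composite is the identity because $(B' \oplus W)\cap C = B'$, by the analogous one-line computation. Both maps preserve the partial order, since the first coordinate reverses inclusions and the second preserves them, and both $- \cap C$ and $- \oplus W$ are monotone; hence they are inverse isomorphisms of posets. Finally, for $g \in \mr{GL}(P)$ preserving $V$, $W$ and $C$ we have $g(B \cap C) = gB \cap gC = gB \cap C$, so the forward map intertwines the $g$-actions, giving the asserted equivariance.

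I do not anticipate a genuine obstacle: the whole argument is elementary linear algebra once the two candidate maps $(A,B) \mapsto (A, B\cap C)$ and $(A',B') \mapsto (A', B' \oplus W)$ are written down. The one point that requires a little care is verifying that the image splittings still consist of nonzero proper subspaces, and this is exactly where the hypothesis $V \oplus W \neq P$ enters (through $V \subsetneq C$); everything else is formal.
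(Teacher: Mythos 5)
Your proof is correct and follows exactly the approach in the paper: the same mutually inverse assignments $(A,B)\mapsto(A,B\cap C)$ and $(A',B')\mapsto(A',B'\oplus W)$, with the key well-definedness point being that $V\oplus W\neq P$ forces $B\cap C\neq 0$. You have simply spelled out more of the routine verifications (directness of sums, order-preservation, equivariance) than the paper does.
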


\begin{proof}Mutually inverse functors are given by
	\begin{align*}\cS^{E_1}(\sdash \subset V,W \subset \sdash|P) & \overset{\cong}{\longleftrightarrow} \cS^{E_1}(\sdash \subset V,\sdash|C) \\
	(A,B) & \longmapsto (A,B \cap C) \\
	(A',B' \oplus W) & \longmapsfrom (A',B') \end{align*} 
This is well-defined, as it can not happen that $B \cap C = \{0\}$, because $V \oplus W \neq P$ implies that $B \cap C$ has dimension at least $1$.\end{proof}

\subsubsection{The second reduction} 

\begin{proposition}\label{prop:second-reduction-general} 
The case $(n,w)$ is implied by the cases $\{(n', w') \mid n' < n\}$.
\end{proposition}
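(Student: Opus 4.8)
The plan is to run the same machinery as in Proposition \ref{prop:first-reduction-general}, now applied to the relative split building $\cS^{E_1}(\sdash, W \subset \sdash|\bF^n)$, using a projection functor whose fibres involve the \emph{dual} relative Tits building $\titsdualrel{P}{W}$ (or equivalently, by the duality identification in the excerpt, a relative Tits building), whose spherical dimension is controlled by $\dim W$ rather than by the ambient dimension $n$. First I would set up a map of posets $f\colon \cX \coloneqq \cS^{E_1}(\sdash, W \subset \sdash|\bF^n) \to \cY$, where $\cY$ should be (the opposite of) the poset of subspaces $B \subsetneq \bF^n$ with $B \supset W$ and $B \neq \bF^n$; this is exactly $\titsdualrel{(\bF^n)^\vee}{W^\circ}$ up to the dualization in Definition \ref{def:rel-tits-dual}, hence by Lemma \ref{lem:rel-tits-connectivity} it is $(\dim(\bF^n/W)-1)$-spherical, i.e.\ $(n-\dim W-1)$-spherical. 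Sending a splitting $(A,B)$ to $B$ (or rather to the span of $B$ with $W$, arranged so that the result still contains $W$ and is proper) gives such a map $f$; for $U \in \cY$ the fibre $f_{\leq U}$ should, after the cutting-down isomorphism of Lemma \ref{lem:cutting-down}, be identified with an absolute split building $\cS^{E_1}(\sdash,\sdash|U/W)$ or a split building of strictly smaller rank, and $\cY_{>U}$ with a Tits building of a subquotient. These identifications are the heart of the argument.

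Concretely, with $t_\cY(U) \coloneqq \dim(U) - \dim W$ one checks the three hypotheses of Theorem \ref{thm:nerve-spherical}: (i) $\cY$ is $(n-\dim W-1)$-spherical as just noted; (ii) $\cY_{>U}$, the subspaces strictly between $U$ and $\bF^n$, is a copy of $\cT(\bF^n/U)$, hence $(\dim(\bF^n/U)-2) = (n - \dim U - 2)$-spherical, which equals $(t_\cY(U)-1)$ after rearranging using $t_\cY(U) = \dim U - \dim W$ — wait, one must instead use $t_\cY$ counting \emph{down} from $n$; I would set $t_\cY(U) = n - \dim U$, so that $\cY_{>U} = \cT(\bF^n/U)$ is $(t_\cY(U)-2)$-spherical, matching hypothesis (iii) once $n$ is replaced by $n-\dim W -1$ in the statement of the theorem (i.e.\ running Theorem \ref{thm:nerve-spherical} with its ``$n$'' equal to $n-\dim W-1$); and (iii) $f_{\leq U}$, the splittings $(A,B)$ with $W\subset B \subset U$, is by Lemma \ref{lem:cutting-down} isomorphic to $\cS^{E_1}(\sdash,\sdash|C)$ for a complement $C$ to $W$ inside $U$, which has dimension $\dim U - \dim W$, hence is $(\dim U - \dim W - 2)$-spherical; matching hypothesis (ii) of the theorem requires this to equal $\big((n-\dim W-1) - t_\cY(U)\big)$-spherical, i.e.\ $(\dim U - \dim W -1 - (n - \dim U))$... so the bookkeeping forces the choice $t_\cY(U) = n-\dim U$ and one verifies the two spherical dimensions agree. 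Having confirmed the hypotheses, Theorem \ref{thm:nerve-spherical} yields that $\cX$ is $(n-\dim W-1)$-spherical and produces a filtration of $\steonerel{\bF^n}{W}$ whose associated graded pieces are $\strel{\bF^n}{W} \otimes \bk$ in the top piece and, in the lower pieces, $\mr{GL}(\bF^n,\text{ pres }W)$-modules of the form $\mr{Ind}$ from $\mr{GL}(\bF^n, \text{pres }U, \text{pres }W)$ of a tensor product $\mr{St}(\text{subquotient}) \otimes \mr{St}^{E_1}(\text{lower-rank})$, exactly as in the absolute case.

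Then I would feed this filtration into the homology spectral sequence $E^1_{p,q} = H_{p+q}(\mr{GL}(\bF^n,\text{fix }W); F_q/F_{q+1})$, exactly as in the proof of Proposition \ref{prop:first-reduction-general}. The top column $q=-1$ contributes $H_*(\mr{GL}(\bF^n,\text{fix }W); \strel{\bF^n}{W} \otimes \bk)$, which vanishes in the required range by hypothesis (\ref{enum.relsteinbergcoinv})$^\mr{St}$. Each other column, after Shapiro's lemma, becomes homology of $\mr{GL}(\bF^n,\text{fix }W,\text{pres }U)$ with coefficients in $\mr{St}(\bF^n/U \text{ or similar}) \otimes \mr{St}^{E_1}(\text{rank }< n)$; running the Serre spectral sequence for the extension with kernel $\mr{GL}(\bF^n,\text{fix }U)$ (where a relative $\mr{St}^{E_1}$ of strictly smaller rank appears as coefficients), the fibre homology vanishes in the range by the inductive hypothesis for $n' < n$ — this is precisely where the hypothesis ``$n' < n$'' of the proposition is used and why the induction closes. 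The main obstacle I anticipate is getting the cutting-down/projection identifications and the associated index bookkeeping exactly right: one must choose the target poset $\cY$ and the grading function $t_\cY$ so that \emph{both} the fibres $f_{\leq U}$ come out as honest (absolute or smaller-rank) split buildings via Lemma \ref{lem:cutting-down} \emph{and} the three spherical-dimension equalities in Theorem \ref{thm:nerve-spherical} hold simultaneously; once that is pinned down, the spectral sequence bookkeeping and the appeal to the inductive hypothesis are routine, paralleling Proposition \ref{prop:first-reduction-general} almost verbatim.
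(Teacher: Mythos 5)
Your projection goes the wrong way, and this breaks the argument in two places.

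First, the target poset is misidentified. You take $\cY$ to be proper subspaces $B$ with $W \subset B$, claiming this is $\titsdualrel{(\bF^n)^\vee}{W^\circ}$ up to dualization. But the paper's duality identifies $\titsdualrel{(\bF^n)^\vee}{W^\circ}$ with $\titsrel{\bF^n}{W}$, which is the poset of subspaces \emph{complementary} to $W$ (those $V$ with $V \cap W = 0$), not those \emph{containing} $W$. Your actual poset is either contractible (if $B = W$ is allowed, since $W$ is then an initial element) or the absolute Tits building $\cT(\bF^n/W)$ (if $B = W$ is excluded), which is only $(n-\dim W-2)$-spherical. Theorem \ref{thm:nerve-spherical} would then output that $\cX$ is contractible or $(n-\dim W-2)$-spherical, both of which contradict the claimed $(n-\dim W-1)$-sphericality of the relative split building.

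Second, and more seriously, the fibres of the projection $(A,B) \mapsto B$ do not cut down to lower rank. If $\cY$ is ordered by inclusion, then $f_{\leq U}$ consists of pairs $(A,B)$ with $W \subset B \subset U$ and $A$ a complement of $B$ \emph{inside all of $\bF^n$}; the $A$-coordinate is not constrained to a smaller ambient space, so this is not a split building of $U$ or $U/W$, and Lemma \ref{lem:cutting-down} (which requires the $A$-coordinate to lie in a fixed $V$) does not apply. If $\cY$ is ordered by reverse inclusion, then $f_{\leq U}$ becomes $\cS^{E_1}(\sdash, U \subset \sdash | \bF^n)$, a relative split building of the \emph{same} rank $n$, so the induction on $n$ cannot close. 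The paper instead projects to the $A$-coordinate, with target $\cY = \titsrel{\bF^n}{\bF^w}$ (which does have the needed $(n-w-1)$-sphericality). The fibres are then $\cS^{E_1}(\sdash \subset V, \bF^w \subset \sdash | \bF^n)$, with $A$ constrained from above by $V$; now Lemma \ref{lem:cutting-down} applies with a complement $C$ to $\bF^w$ containing $V$, giving $\cS^{E_1}(\sdash \subset V, \sdash | C)$ with $\dim C = n - w < n$. Dualizing yields $\cS^{E_1}(\sdash, V^\circ \subset \sdash | C^\vee)$, a relative split building of strictly smaller rank, closing the induction. The asymmetry is forced: the cutting-down lemma cuts out $W$ from the $B$-coordinate only when the $A$-coordinate is already pinned inside a complement of $W$. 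You need to fix the projection to $A$ before the rest of your spectral-sequence outline (which is otherwise on the right track) can proceed.
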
  

\begin{proof} 
When $w = n-1$, $\cS^{E_1}(\sdash,\bF^w \subset \sdash|\bF^n)$ is isomorphic to $\titsrel{\bF^n}{\bF^w}$ and we already know the result. So let us assume that $w \leq n-2$. 

Consider the map  of posets
\begin{align*}\cX \coloneqq \cS^{E_1}(\sdash,\bF^w \subset \sdash|\bF^n) &\overset{f}\lra \cY \coloneqq \titsrel{\bF^n}{\bF^w} \\
(A,B) &\longmapsto A, \end{align*}
and take $t_\cY(V) = n-\dim V-w$. We verify the assumptions of \cref{thm:nerve-spherical}:
\begin{enumerate}[(i)]
	\item $\titsrel{\bF^n}{\bF^w}$ is $(n-w-1)$-spherical by (\ref{enum.reltitsspherical}).

	\item $f_{\leq V}$ is $\cS^{E_1}(\sdash \subset V,\bF^w \subset \sdash|\bF^n)$, which we claim is $(\dim V-1)= (n-w-1-t_\cY(V))$-spherical.
	
	There are two cases: the first case is that $V \oplus \bF^w = \bF^n$, and then the complex is contractible (the splitting $(V,\bF^w)$ is terminal) and hence is a wedge of no $(n-w-1-t_\cY(V))$-spheres. The second case is that $V \oplus \bF^w \neq \bF^n$. Taking a complement $C$ to $\bF^w$, we can apply the cutting down lemma to see it is isomorphic to $\cS^{E_1}(\sdash \subset V,\sdash|C)$. Dualizing makes this isomorphic to $\cS^{E_1}(\sdash,V^\circ \subset \sdash|C^\vee)$, which is $(n-w-(w-\dim V)-1)=(\dim (V)-1)$-spherical by induction.

	\item $\cY_{>V}$ is isomorphic to $\titsrel{\bF^n/V}{\bF^w/V}$, which is $(n - \dim V - w - 1) = (t_\cY(V)-1)$-spherical by (\ref{enum.reltitsspherical}). Here and below we write $\bF^w/V$ for the image of $\bF^w$ under the projection map $\bF^n \to \bF^n/V$, and since $\bF^w \cap V = 0$ the vector space $\bF^w/V$ remains $w$-dimensional.
\end{enumerate} 
\cref{thm:nerve-spherical} then implies that $\cX = \cS^{E_1}(\sdash,\bF^w \subset \sdash|\bF^n)$ is $(n-w-1)$-spherical, and gives a filtration of the $\bk[\mr{GL}(\bF^n, \text{ fix }\bF^w)]$-module $\steonerel{\bF^n}{\bF^w}$ with $F_{-1}/F_0 = \strel{\bF^n}{\bF^w} \otimes \bk$ and further filtration quotients $F_{q-2}/F_{q-1}$ given by
\begin{align*}&\bigoplus_{\substack{0 < V < \bF^n \\ V \cap \bF^w = 0 \\ \dim V=q-1}} \strel{\bF^n/V}{\bF^w/V} \otimes \widetilde{H}_{q-2}(\cS^{E_1}(\sdash \subset V,\bF^w \subset \sdash|\bF^n)) \otimes \bk \\
&\qquad \cong \mr{Ind}^{\mr{GL}(\bF^n,\text{ fix $\bF^w$})}_{\mr{GL}(\bF^n,\text{ fix $\bF^w$, pres $V$})} \left(\shortstack{$\strel{\bF^n/V}{\bF^w/V} \otimes$ \\
	$\widetilde{H}_{q-2}(\cS^{E_1}(\sdash \subset V,\bF^w \subset \sdash|\bF^n)) \otimes \bk$}\right).\end{align*}
for $V$ a choice of $(q-1)$-dimensional subspace not intersecting $\bF^w$ (for example, the span of the last $q-1$ basis vectors).
	
The filtration quotient $F_{-1}/F_0 = \strel{\bF^m}{\bF^w} \otimes \bk$ has trivial $\mr{GL}(\bF^m,\text{ fix $\bF^w$})$-homology in degrees $i \leq c$ by (\ref{enum.relsteinbergcoinv})$^\mr{St}$. For the remaining filtration quotients, we may assume that $V \oplus \bF^w \neq \bF^n$ (otherwise the complex is contractible). Then we use Shapiro's lemma to rewrite $H_*(\mr{GL}(\bF^m,\text{ fix $\bF^w$}) ; F_{q-2}/F_{q-1})$ as the homology of the action of $\mr{GL}(\bF^m, \text{ fix $\bF^w$, pres $V$})$ on
	\[ \strel{\bF^n/V}{\bF^w/V} \otimes \widetilde{H}_{q-2}(\cS^{E_1}(\sdash \subset V,\bF^w \subset \sdash|\bF^n)) \otimes \bk.\] 
To show that this vanishes in a range of degrees, define a group $K$ by the extension
	\[0 \lra K \lra \mr{GL}(\bF^n, \text{fix $\bF^w$, pres $V$}) \lra \mr{GL}(\bF^n/V, \text{fix $\bF^w/V$}) \lra 0,\]
that is, $K=\mr{GL}(\bF^n, \text{fix $\bF^w$, pres $V$, fix $\bF^n/V$})$ (see \cref{fig:k-matrix} for an example). This groups acts trivially on $\strel{\bF^n/V}{\bF^w/V}$, so using the spectral sequence for this group extension it is enough to show vanishing for $i \leq c$ of
\[H_i\left(K ; \widetilde{H}_{\dim V-1}(\cS^{E_1}(\sdash \subset V,\bF^w \subset \sdash|\bF^n))\otimes \bk\right).\]
	
If $C$ is a complement to $\bF^w$ which contains $V$, then each element of $K$ has to preserve $C$ because $K$ acts as the identity on $\bF^n/V$. Thus  \cref{lem:cutting-down} gives a $\bk[K]$-module isomorphism
	\[\widetilde{H}_{\dim V-1}(\cS^{E_1}(\sdash \subset V,\bF^w \subset \sdash|\bF^n)) \otimes \bk \cong \widetilde{H}_{\dim V-1}(\cS^{E_1}(\sdash \subset V,\sdash|C)) \otimes \bk.\]
Furthermore the natural map $K \to \mr{GL}(C)$ is an isomorphism onto its image $\mr{GL}(C, \text{ pres $V$, fix $C/V$})$. Thus we are reduced to showing the vanishing of
	\[H_i\left(\mr{GL}(C, \text{pres $V$, fix $C/V$}) ; \widetilde{H}_{\dim V-1}(\cS^{E_1}(\sdash \subset V,\sdash|C)) \otimes \bk\right).\]
Dualising identifies this with
	\[H_i\left(\mr{GL}(C^\vee, \text{fix $V^\circ$}) ; \steonerel{C^\vee}{V^\circ} \otimes \bk\right)\]
which vanishes in the required range by induction as $\dim C < n$ because $C$ is a complement to the non-trivial $\bF^w$. To see that the hypothesis $\dim C = n-w \geq 2$ holds, recall that we assumed $w \leq n-2$ at the beginning of this proof.
\end{proof}

\begin{figure}
	\[\begin{bmatrix}
	1 & 0 & 0 & 0 & 0 \\
	0 & 1 & 0 & 0 & 0 \\
	0 & 0 & 1 & 0 & 0 \\
	0 & 0 & \ast & \ast & \ast \\
	0 & 0 & \ast & \ast & \ast \\
	\end{bmatrix}\]
	\caption{An element of $K$ as in the proof of \cref{prop:second-reduction-general} with $n=5$, $w=2$, and $V=\langle e_4, e_5\rangle$.}
	\label{fig:k-matrix}
\end{figure}

\section{General linear groups of finite fields except $\bF_2$} 
Throughout this section, we consider the field $\bF_{q}$ with $q = p^r \neq 2$.

\subsection{$E_1$-Steinberg homology}\label{sec:e1-steinberg-fp} 

Our first goal is to establish the following vanishing theorem for the homology of the $E_1$-Steinberg module.

\begin{theorem}\label{thm:gl-finite-coinvariants-simple}
$H_d(\mr{GL}_n(\bF_q);\mr{St}^{E_1}(\bF_q^n) \otimes \bF_p) =0$ for $d < r(p-1)-1$ and $n \geq 2$.
\end{theorem}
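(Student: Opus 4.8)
The plan is to apply Theorem~\ref{thm.steinbergcoinv} with $\bk = \bF_p$ and $\bF = \bF_q$, which reduces the claim to checking its two hypotheses: the vanishing of $H_i(\mr{GL}(\bF_q^n);\mr{St}(\bF_q^n)\otimes\bF_p)$ and of $H_i(\mr{GL}(\bF_q^n,\text{ fix }W);\strel{\bF_q^n}{W}\otimes\bF_p)$ for all $n\geq 2$ and all $i \leq c$, with $c \coloneqq r(p-1)-2$. The first of these is classical: $\mr{St}(\bF_q^n)\otimes\bF_p$ is the Steinberg module in the defining characteristic, which is projective as an $\bF_p[\mr{GL}_n(\bF_q)]$-module, so $H_i(\mr{GL}_n(\bF_q);\mr{St}(\bF_q^n)\otimes\bF_p)=0$ for all $i>0$, and in fact we must also handle $i=0$: the coinvariants $(\mr{St}(\bF_q^n)\otimes\bF_p)_{\mr{GL}_n(\bF_q)}$ vanish because $\mr{St}(\bF_q^n)\otimes\bF_p$ is irreducible and nontrivial for $n\geq 2$ (so has no trivial quotient). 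Hence hypothesis (1)$^{\mr{St}}$ holds for \emph{all} $i$, in particular for $i\leq c$.

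The substantive point is hypothesis (2)$^{\mr{St}}$: controlling $H_i(\mr{GL}(\bF_q^n,\text{ fix }W);\strel{\bF_q^n}{W}\otimes\bF_p)$. Here I would use Quillen's decomposition of the relative Tits building. Recall that dualizing identifies $\strel{\bF_q^n}{W}$ with the top homology of $\tdualrel{(\bF_q^n)^\vee}{W^\circ}$, and Lemma~\ref{lem:p-decomposition} gives, for a line $L\subset W^\circ$, a $\mr{GL}(P,\text{ pres }L,\text{ pres }W)$-equivariant homotopy equivalence $\tdualrel{P}{W^\circ}\simeq \tdualrel{P/L}{W^\circ/L}\ast\tdualrel{P}{L}$, hence a Künneth-type splitting of the Steinberg module as a tensor product, with one factor being $\mr{St}^{E_1}$-like relative to a \emph{line}. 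Iterating this over a full flag in $W^\circ$ expresses $\strel{\bF_q^n}{W}$ (up to a shift and induction) as an iterated join, whose top homology is built from copies of ordinary Steinberg modules $\mr{St}(\bF_q/L\text{-quotients})$ and line-relative pieces $\strel{\bF_q^n}{L}$. The group $\mr{GL}(\bF_q^n,\text{ fix }W)$ contains the unipotent radical $U\cong W\otimes(\bF_q^n/W)$, a $p$-group, acting, and the quotient acts on the various quotient buildings; one pushes the computation through a Serre/LHS spectral sequence for the extension $U\to\mr{GL}(\bF_q^n,\text{ fix }W)\to\mr{GL}(\bF_q^n/W)$, reducing to the homology of the $p$-group $U$ with coefficients in these Steinberg factors. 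Because $\dim_{\bF_p}\bF_q = r$, the abelianized unipotent group $W\otimes(\bF_q^n/W)$ has rank divisible by $r$, and its $\bF_p$-homology — hence that of $U$ — vanishes below degree $r$ (for a rank-$\geq 1$ situation), and more precisely below $r(p-1)$ after accounting for the full $p$-group structure; combined with the fact that the Steinberg factors contribute no trivial quotient, one gets vanishing for $i\leq r(p-1)-2$. The bookkeeping of exactly how the offset $r(p-1)-2$ emerges from the join decomposition and the $p$-group homology is the delicate part.

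The main obstacle I anticipate is precisely pinning down the numerics in hypothesis~(2)$^{\mr{St}}$: one needs a vanishing statement for $H_i(\mr{GL}(\bF_q^n,\text{ fix }W);\strel{\bF_q^n}{W}\otimes\bF_p)$ that is \emph{uniform in $n$ and $W$} and sharp enough to give the offset $r(p-1)-2$, which means the argument cannot simply say ``$U$ is a $p$-group so its positive-degree $\bF_p$-homology is whatever it is'' — one needs to know $H_i(U;\bF_p)=0$ for $0<i<r(p-1)$ when $U = \bF_q\otimes(\text{something of rank}\geq 1)$, or rather needs the coefficient modules to kill the low-degree contributions via their lack of trivial sub/quotients. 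Most likely the paper isolates this as a separate lemma about the $\bF_p$-homology of $\mr{GL}(\bF_q^n,\text{ fix }W)$ with relative-Steinberg coefficients (an induction on $\dim W$ using Lemma~\ref{lem:p-decomposition}, with the line case $\dim W=1$ as the engine, where $U\cong \mr{Hom}(\bF_q^n/L,L)\cong\bF_q^{\,n-1}$ is elementary abelian of rank $r(n-1)$ and its reduced $\bF_p$-homology starts in degree $r$), and then Theorem~\ref{thm:gl-finite-coinvariants-simple} follows by feeding that lemma and the projectivity of the Steinberg module into Theorem~\ref{thm.steinbergcoinv}.
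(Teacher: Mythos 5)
Your high-level strategy matches the paper: apply Theorem~\ref{thm.steinbergcoinv} with $c=r(p-1)-2$, dispose of hypothesis (1)$^{\mr{St}}$ via projectivity and irreducibility of $\mr{St}(\bF_q^n)\otimes\bF_p$, and reduce hypothesis (2)$^{\mr{St}}$ to a lemma about relative Steinberg homology proved via Quillen's decomposition (Lemma~\ref{lem:p-decomposition}). However, the numerical engine you propose is wrong, and it is precisely the part you flag as the ``delicate'' step. You assert that for $U\cong\bF_q^{\,n-1}$ elementary abelian of rank $r(n-1)$, ``its reduced $\bF_p$-homology starts in degree $r$.'' This is false: any nontrivial elementary abelian $p$-group has $H_1(-;\bF_p)\neq 0$, so the reduced $\bF_p$-homology starts in degree~$1$, independent of $r$. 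If this claim were needed, the whole argument would collapse. The actual input (Lemma~\ref{lem:finite-rel-steinberg}) is a vanishing of \emph{coinvariants}: the key computation is $H_{d+1}(\bF_q^{n-1};\bF_p)_{\bF_q^\times}=0$ for $d+1<r(p-1)$, which is Quillen's Lemma 16 from \cite{quillenfinite}. It is the twisting by the torus $\bF_q^\times$ (acting by scaling on the line $L$) that produces the offset $r(p-1)$; the homology of $U$ itself does not vanish in low degrees.

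Two further structural points you miss. First, the paper does not iterate Quillen's decomposition over a full flag in $W$; it applies it once along a single line $L\subset W$, after which a Serre spectral sequence for the extension $K'\to K\to\mr{GL}(\bF_q^n/L)$ handles the $\tdualrel{\bF_q^n/L}{\bF_q^w/L}$ factor as a trivial $K'$-module, with no further decomposition needed. Second, there is a preliminary transfer step: the group $\mr{GL}(\bF_q^n,\text{pres }\bF_q^w,\text{fix }\bF_q^n/\bF_q^w)$ is replaced by the line-stabilizer $K$, using that $[\,\cdot\,:K]=\frac{q^w-1}{q-1}\equiv 1\pmod p$. Relatedly, the $\bF_p$-acyclicity of the stabilizer $\bF_q^\times$ (of a complementary hyperplane) is what turns the short exact sequence of coefficient modules into the isomorphism $H_{d+1}(K';\bF_p)\cong H_d(K';\tilde H_0(\tdualrel{\bF_q^n}{L};\bF_p))$, and this acyclicity holds if and only if $q\neq 2$. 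This is the real reason the hypothesis $q\neq 2$ enters — not the rank count you suggest. Your proposal correctly identifies the shape of the argument but locates the source of the $r(p-1)-1$ bound in the wrong place and omits the role of the torus, the transfer, and the $q\neq 2$ hypothesis.
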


To prove \cref{thm:gl-finite-coinvariants-simple}, we use \cref{thm.steinbergcoinv}. The required input is a vanishing result for the homology of $\mr{GL}_n(\bF_q)$ with coefficients in the Steinberg module, and relative variations thereof. We shall provide these now.

For a finite field $\bF_q$ and $n\geq 2$, $\mr{St}(\bF_q^n) \otimes \bF_p$ is a nontrivial irreducible projective $\bF_p[\mr{GL}_n(\bF_q)]$-module \cite[Sections 2 and 3]{Humphreys} and thus we have
\[H_*(\mr{GL}_n(\bF_q);\mr{St}(\bF_q^n) \otimes \bF_p) = 0\]
for all $n \geq 2$. More precisely, the higher homology vanishes because it is projective, and the coinvariants vanish because, if not, they would provide a nontrivial quotient of a nontrivial irreducible module. 

The similar looking groups $H_d(\mr{GL}_n(\bF_q);\mr{St}^{E_1}(\bF_q^n) \otimes \bF_p)$ cannot vanish for all $d\geq 0$ and all $n \geq 2$: if they did, the $E_1$-homology of $\smash{\gR_{\bF_p}}$ would vanish for $n \geq 2$, implying that $\gR_{\bF_p} \simeq \gE_1(S^{0,1})$, which has the wrong homology. Consequently, by \cref{thm.steinbergcoinv}, the homology of the relative Steinberg modules $\strel{\bF_q^n}{W} \otimes \bF_p$ cannot vanish in all degrees for $n \geq 2$ either. However, we will show that their homology \emph{does} vanish in a range of degrees.

To do so, we will use the dual relative Tits building $\titsdualrel{P}{W}$ of \cref{def:rel-tits-dual}. Given a subspace $W \subset P$, this is the subposet of $\cT(P)$ of non-zero proper subspaces $V$ such that $V+W = P$. By taking duals, it is isomorphic to the ordinary relative Tits building $\titsrel{P^\vee}{W^\circ}$. The group $\mr{GL}(P,\text{ pres $W$, fix $P/W$})$ acts on $\tdualrel{P}{W}$, and under dualization this goes to action of the isomorphic group $\mr{GL}(P^\vee, \text{ fix $W^{\circ}$})$. As before, without loss of generality $W = \bF_q^w$.

\begin{lemma}\label{lem:finite-rel-steinberg} 
Let $n \geq 2$ and $0<w<n$, then 
	\[H_d\left(\mr{GL}(\bF_q^n, \text{ pres $\bF_q^w$, fix $\bF_q^n/\bF_q^w$}) ; \widetilde{H}_{w-1}(\tdualrel{\bF_q^n}{\bF_q^w}) \otimes \bF_p\right)=0\]
	for $d < r(p-1)-1$.
\end{lemma}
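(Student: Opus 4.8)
The plan is to reduce to a statement about the homology of a Young-type parabolic subgroup acting on a relative Steinberg module, and then to induct on $w$ using Quillen's join decomposition (Lemma~\ref{lem:p-decomposition}).

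First I would unwind the group $G \coloneqq \mr{GL}(\bF_q^n, \text{ pres }\bF_q^w, \text{ fix }\bF_q^n/\bF_q^w)$: it fixes the quotient $\bF_q^n/\bF_q^w$, hence acts as the identity on $\bF_q^w \cong \ker$, wait — it preserves $\bF_q^w$ and fixes the quotient, so in block form with respect to the decomposition $\bF_q^n = \bF_q^w \oplus \bF_q^{n-w}$ it consists of matrices $\begin{bmatrix} A & B \\ 0 & \mr{id} \end{bmatrix}$ with $A \in \mr{GL}_w(\bF_q)$ and $B$ arbitrary. Thus $G$ is an extension of $\mr{GL}_w(\bF_q)$ by the abelian group $M_{w,n-w}(\bF_q)$, which as an $\bF_p$-vector space has dimension $w(n-w)r$. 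Via the dualization of Definition~\ref{def:rel-tits-dual}, $\tdualrel{\bF_q^n}{\bF_q^w} \cong \trel{(\bF_q^n)^\vee}{(\bF_q^w)^\circ}$, and $(\bF_q^w)^\circ$ is an $(n-w)$-dimensional subspace; the group $G$ is carried to $\mr{GL}((\bF_q^n)^\vee, \text{ fix }(\bF_q^w)^\circ)$, so $\tilde H_{w-1}(\tdualrel{\bF_q^n}{\bF_q^w}) \cong \strel{(\bF_q^n)^\vee}{(\bF_q^w)^\circ}$ and the statement becomes: $H_d(\mr{GL}(\bF_q^n, \text{ fix }\bF_q^{n-w}) ; \strel{\bF_q^n}{\bF_q^{n-w}} \otimes \bF_p) = 0$ for $d < r(p-1)-1$. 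I would then run an induction on $w$ (equivalently on $\dim W$). For $w=1$, i.e.\ $W$ a line, the unipotent radical is $\bF_q^{n-1}$, dimension $(n-1)r$ over $\bF_p$, and $\mr{GL}_1(\bF_q) = \bF_q^\times$ acts; I would analyze $H_*(\bF_q^{n-1} \rtimes \bF_q^\times; \strel{\bF_q^n}{L} \otimes \bF_p)$ directly, using that $\trel{\bF_q^n}{L}$ is $(n-2)$-spherical (Lemma~\ref{lem:rel-tits-connectivity}) and that the low-degree homology of the unipotent group $\bF_q^{n-1}$ with $\bF_p$-coefficients is concentrated starting in degree $0$, with $H_d$ a sum of copies of exterior/divided-power classes of internal degree $\geq d$; the first nontrivial contribution beyond coinvariants appears in degree $\geq$ the relevant slope, giving the bound $r(p-1)-1$ as in the absolute finite-field case (this is where the $p-1$ and the $r$ enter, via the structure of $H_*(\bZ/p;\bF_p)$ and a $r$-fold Künneth / restriction of scalars).

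For the inductive step $w \geq 2$, I would apply Lemma~\ref{lem:p-decomposition} with $L \subset W$ a line: $\tdualrel{P}{W} \simeq \tdualrel{P/L}{W/L} \ast \tdualrel{P}{L}$, equivariantly for $\mr{GL}(P, \text{ pres }L, \text{ pres }W)$ and in particular for the relevant fixing subgroup. A join induces a smash on reduced chains, so $\tilde H_{w-1}(\tdualrel{P}{W}) \cong \tilde H_{w-2}(\tdualrel{P/L}{W/L}) \otimes \tilde H_{0}(\tdualrel{P}{L})$ as modules over that subgroup. The factor $\tilde H_0(\tdualrel{P}{L})$ is the (reduced) homology of a discrete set — the complements to $L$ — and is an induced/permutation module, so Shapiro's lemma lets me replace $\mr{GL}(P, \text{ fix }P/W)$-homology of the tensor product by the homology of a smaller group (one preserving an extra line) with coefficients in $\tilde H_{w-2}(\tdualrel{P/L}{W/L}) \otimes \bF_p$ — this is a relative dual Tits building of a vector space of the same dimension $n$ but with $W/L$ of dimension $w-1$, so the inductive hypothesis (applied to the pair $(n, w-1)$, plus possibly a further Serre-spectral-sequence step to move between "fix" and "pres" subgroups as in the Remark after Theorem~\ref{thm.steinbergcoinv}) gives vanishing in the same range $d < r(p-1)-1$. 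I would be careful that splitting off the line $L$ does not change the vanishing range: the key point is that the relevant slope-$0$ vanishing line $d < r(p-1)-1$ depends only on $q$, not on $n$ or $w$, so the induction closes.

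The main obstacle I expect is the base case $w=1$: controlling $H_*(\bF_q^{n-1} \rtimes \bF_q^\times; \strel{\bF_q^n}{L} \otimes \bF_p)$ requires understanding the low-degree group homology of an elementary abelian $p$-group of large rank with coefficients in the Steinberg module, and extracting exactly the bound $r(p-1)-1$ — this is precisely where the arithmetic of $q = p^r$ enters and where one must use (a relative form of) the input that $H_*(\mr{GL}_n(\bF_q); \mr{St}(\bF_q^n)\otimes\bF_p) = 0$ together with a careful bookkeeping of internal degrees, e.g.\ via a May-type or Koszul-dual spectral sequence for $H_*(\bF_q^{n-1};\bF_p)$. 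The inductive step, by contrast, is essentially formal once the join decomposition and Shapiro's lemma are in place; the only subtlety there is matching up the "fix $P/W$" versus "pres $L$, fix $P/W$" subgroups, which is handled by the Serre spectral sequence exactly as in the Remark following Theorem~\ref{thm.steinbergcoinv}.
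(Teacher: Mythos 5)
You have the right cast of characters — Quillen's join decomposition (Lemma~\ref{lem:p-decomposition}), dualization, the group $\bF_q^{n-1}\rtimes\bF_q^\times$, and the arithmetic of $q=p^r$ — but both your reduction mechanism and your base case have genuine gaps, and you have missed a step that the paper's argument cannot do without.

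First, the proof is not an induction on $w$, and your proposed inductive step does not go through as stated. The paper does a single pass: it picks a line $L\subset\bF_q^w$, reduces from the full group $G$ to the subgroup $K$ preserving $L$ by a \emph{transfer argument} (the index is $|\bP(\bF_q^w)|=\frac{q^w-1}{q-1}\equiv 1\bmod p$, so restriction followed by transfer is the identity on $\bF_p$-homology) — you omit this step entirely, and the join decomposition is only equivariant for groups preserving $L$, so you need it. After the join $\tilde H_{w-1}\cong\tilde H_{w-2}(\tdualrel{\bF_q^n/L}{\bF_q^w/L};\bF_p)\otimes\tilde H_0(\tdualrel{\bF_q^n}{L};\bF_p)$, the key point is that the normal subgroup $K'=\mr{GL}(\bF_q^n,\text{pres }L,\text{fix }\bF_q^n/L)\cong\bF_q^{n-1}\rtimes\bF_q^\times$ acts \emph{trivially} on the first tensor factor, so the Serre spectral sequence for $K'\triangleleft K$ reduces the question, for any $w$, to showing $H_i(K';\tilde H_0(\tdualrel{\bF_q^n}{L};\bF_p))=0$. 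That replaces your induction. Your alternative — ``$\tilde H_0(\tdualrel{P}{L})$ is an induced/permutation module, so Shapiro'' — is not correct: it is the \emph{kernel} of the augmentation $\bF_p\{\text{hyperplanes}\}\to\bF_p$, not a permutation module, so Shapiro does not apply to it directly. (You also slip on dimensions: $\tdualrel{P/L}{W/L}$ lives in $P/L$ of dimension $n-1$, so your inductive hypothesis would have to be for the pair $(n-1,w-1)$, not $(n,w-1)$.)

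Second, your base case is where the theorem actually lives, and it remains unproved in your proposal. The paper's argument is short and does not use a May-type or Koszul spectral sequence, nor does it invoke the vanishing of $H_*(\mr{GL}_n(\bF_q);\mr{St}(\bF_q^n)\otimes\bF_p)$. Instead: $\tdualrel{\bF_q^n}{L}$ is the discrete set of hyperplanes complementary to $L$, on which $K'\cong\bF_q^{n-1}\rtimes\bF_q^\times$ acts transitively with stabiliser $\bF_q^\times$; since $|\bF_q^\times|$ is prime to $p$ this stabiliser is $\bF_p$-acyclic, so the long exact sequence of
\[0\lra\tilde H_0(\tdualrel{\bF_q^n}{L};\bF_p)\lra\bF_p\{\text{hyperplanes}\}\lra\bF_p\lra 0\]
identifies $H_d(K';\tilde H_0)\cong H_{d+1}(K';\bF_p)\cong H_{d+1}(\bF_q^{n-1};\bF_p)_{\bF_q^\times}$; the latter vanishes for $d+1<r(p-1)$ by Quillen's Lemma~16 of \cite{quillenfinite}. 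That computation is what produces the bound $r(p-1)-1$, and your proposal never pins it down.
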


\begin{proof}
Let $L \subset \bF_q^w$ be a 1-dimensional subspace, and let $K$ be the subgroup of $\mr{GL}(\bF_q^n, \text{ pres $\bF_q^w$, fix $\bF_q^n/\bF_q^w$})$ given by 
	\[K \coloneqq \mr{GL}(\bF_q^n, \text{pres $L$, pres $\bF_q^w$, fix $\bF_q^n/\bF_q^w$}).\] 
The group $\mr{GL}(\bF_q^n, \text{pres $\bF_q^w$, fix $\bF_q^n/\bF_q^w$})$ acts transitively on the set of lines $\bP(\bF_q^w)$ in $W$ with $K$ the stabiliser of $L$. Thus this subgroup has index 
	\[\vert \bP(\bF_q^w) \vert = \frac{q^w-1}{q-1} \equiv 1 \mod p.\]
	It follows by a transfer argument that the map
	\[\begin{tikzcd}H_*\left(\mr{GL}(\bF_q^n \text{, pres $L$, pres $\bF_q^w$, fix $\bF_q^n/\bF_q^w$}); \widetilde{H}_{w-1}(\tdualrel{\bF_q^w}{\bF_q^n};\bF_p)\right) \dar{i_*} \\[-5pt] H_*\left(\mr{GL}(\bF_q^n, \text{ pres $\bF_q^w$, fix $\bF_q^n/\bF_q^w$}); \widetilde{H}_{w-1}(\tdualrel{\bF_q^w}{\bF_q^n};\bF_p)\right)\end{tikzcd}\]
	is surjective, so it is enough to show the vanishing of the source.
	
	By \cref{lem:p-decomposition} we have a weak equivalence
	\[\tdualrel{\bF_q^n}{\bF_q^w} \simeq \tdualrel{\bF_q^n/L}{\bF_q^w/L} * \tdualrel{\bF_q^n}{L},\]
	equivariant for the subgroup $K$ defined above. Thus we get an isomorphism
	\[\widetilde{H}_{w-1}(\tdualrel{\bF_q^n}{\bF_q^w}\;\bF_p) \cong \widetilde{H}_{w-2}(\tdualrel{\bF_q^n/L}{\bF_q^w/L};\bF_p) \otimes \widetilde{H}_0(\tdualrel{\bF_q^n}{L};\bF_p)\]
	of $\bF_p[K]$-modules. To deal with the source of $i_*$, we consider the subgroup of $K$ which acts trivially on $\bF_q^n/L$, i.e.\ 
	\[K' \coloneqq \mr{GL}(\bF_q^n,\text{ pres $L$, fix $\bF_q^n/L$}) \subset K.\]
	This is the kernel of the natural surjective homomorphism $K \to \mr{GL}(\bF_q^n/L)$ (see \cref{fig:kprime-matrix} for an example). It is isomorphic to the semidirect product $\bF^{n-1}_q \rtimes \bF_q^\times$.
	
	The group $K'$ acts trivially on $\widetilde{H}_{w-2}(\tdualrel{\bF_q^n/L}{\bF_q^w/L};\bF_p)$, so using the Serre spectral sequence it is enough to show vanishing of $H_i(K';\widetilde{H}_0(\tdualrel{\bF_q^n}{L};\bF_p))$ in the claimed range. Now $\tdualrel{\bF_q^n}{L}$ is the set of hyperplanes in $\bF_q^n$ complementary to $L$, and $K'$ acts transitively on these with stabiliser $\bF_q^\times$. Because this stabiliser is $\bF_p$-acyclic the exact sequence
	\[0 \lra \widetilde{H}_0(\tdualrel{\bF_q^n}{L};\bF_p) \lra \bF_p\{\titsdualrel{\bF_q^n}{L}\} \lra \bF_p \lra 0\]
implies that the connecting homomorphism
	\[H_{d+1}(\bF_q^{n-1};\bF_p)_{\bF_q^\times} = H_{d+1}(K';\bF_p) \overset{\sim}\lra H_d(K'; \widetilde{H}_0(\tdualrel{\bF_q^n}{L};\bF_p))\]
	is an isomorphism for $d \geq 0$. By the description of the $\bF_p$-homology of finitely-generated abelian $p$-groups, \cite[\S 11, esp.\ Lemma 16]{quillenfinite}, this vanishes for $d+1 < r(p-1)$.
\end{proof}

\begin{figure}
	\[\begin{bmatrix}
	\ast & \ast & \ast & \ast & \ast \\
	0 & 1 & 0 & 0 & 0 \\
	0 & 0 & 1 & 0 & 0 \\
	0 & 0 & 0 & 1 & 0 \\
	0 & 0 & 0 & 0 & 0 \\
	\end{bmatrix}\]
	\caption{An element of $K'$ when $n=5$ and $L = \langle e_1\rangle$.}
	\label{fig:kprime-matrix}
\end{figure}

\begin{remark}
Stripped down, \cref{lem:finite-rel-steinberg} implies that relative Steinberg coinvariants vanish for any field $\bF$ satisfying $\bF_{\bF^\times} = 0$. This is satisfied by all fields except $\bF_2$.
\end{remark}

\subsection{The proof of \cref{thm:main-fp}} In \cref{sec:einfty-algebra}, we defined a non-unital $E_\infty$-algebra $\gR_{\bF_p} \in \Alg_{E_\infty}(\cat{sMod}_{\bF_p}^\bN)$ satisfying
\[H_*(\gR_{\bF_p}(n)) \cong \begin{cases} 0 & \text{if $n = 0$,} \\
H_*(\mr{GL}_n(\bF_q);\bF_p) & \text{if $n>0$}, \end{cases}\]
and by \eqref{eqn:e1homology-ste1} we have $H_{d-n+1}(\mr{GL}_n(\bF_q);\mr{St}^{E_1}(\bF_q^n) \otimes \bF_p) \cong H^{E_1}_{n,d}(\gR_{\bF_p})$. Thus \cref{thm:gl-finite-coinvariants-simple} implies that as long as $n \geq 2$ and $r(p-1)-2 \geq 0$ (i.e.\ $q \neq 2$) we have $H^{E_1}_{n,d}(\gR_{\bF_p})=0$ in degrees $d < n+r(p-1)-2$. 

Let $\gN_{\bF_p} \in \cat{sMod}_{\bF_p}^\bN$ be given by
\[\gN_{\bF_p}(n) \coloneqq \begin{cases}
0 & \text{if $n=0$,}\\
\bF_p & \text{if $n > 0$.}
\end{cases}\]
This has the structure of a non-unital commutative monoid in $\cat{sMod}_{\bF_p}^\bN$, so in particular of a non-unital $E_\infty$-algebra. The augmentations $\epsilon \colon H_0(\mr{GL}_n(\bF_q);\bF_p) \to \bF_p$ assemble to a map of $E_\infty$-algebras
\[f \colon \gR_{\bF_p} \lra \gN_{\bF_p}.\]
We shall use $H^{E_\infty}_{n,d}(f)$ to denote the relative $E_\infty$-homology of the map $f$, that is, the homology of the homotopy cofiber of the induced map $Q^{E_\infty}_\bL(\gR_{\bF_p}) \to Q^{E_\infty}_\bL(\gN_{\bF_p})$.

\begin{lemma}$H^{E_\infty}_{n,d}(f) = 0$ for $d<\max(r(2p-3)+1,n+r(p-1)-1)$.\end{lemma}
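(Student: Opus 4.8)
The plan is to compute $H^{E_\infty}_{n,d}(f)$ by comparing the $E_\infty$-homology of $\gR_{\bF_p}$, of $\gN_{\bF_p}$, and the relative term, using the long exact sequence
\[\cdots \lra H^{E_\infty}_{n,d}(\gR_{\bF_p}) \lra H^{E_\infty}_{n,d}(\gN_{\bF_p}) \lra H^{E_\infty}_{n,d}(f) \lra H^{E_\infty}_{n,d-1}(\gR_{\bF_p}) \lra \cdots\]
(this is the $E_\infty$-homology long exact sequence of the cofibre of $f$, in the sense of Section $E_k$.11). So I would split the estimate into two pieces: first bound $H^{E_\infty}_{n,d}(\gR_{\bF_p})$, and second compute $H^{E_\infty}_{n,d}(\gN_{\bF_p})$ outright, then combine.

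First I would treat $\gR_{\bF_p}$. We have the $E_1$-homology vanishing line $H^{E_1}_{n,d}(\gR_{\bF_p})=0$ for $d<n+r(p-1)-2$ when $n\geq 2$ (and $H^{E_1}_{n,d}(\gR_{\bF_p})=0$ for $n=0$, while for $n=1$ the $E_1$-homology is concentrated in degree $1$), coming from Theorem \ref{thm:gl-finite-coinvariants-simple} via \eqref{eqn:e1homology-ste1}. I would feed this into the bar spectral sequence of Theorem $E_k$.14.2, which computes $E_\infty$-homology from $E_1$-homology; since $\gR_{\bF_p}$ is concentrated in ranks $\geq 1$, the contributions to $H^{E_\infty}_{n,d}$ from $k$-fold bar constructions land in rank $\geq k$ and get a degree shift, so the worst non-vanishing contributions in rank $n$ come from the two-fold (and higher) bar terms built out of the rank-$1$ class in degree $1$ together with the $E_1$-homology in ranks $\geq 2$. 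This is exactly the kind of bookkeeping done in \cite{e2cellsI} and \cite{e2cellsII}: one gets $H^{E_\infty}_{n,d}(\gR_{\bF_p})=0$ for $d<n+r(p-1)-2$ as well, except possibly for a correction in low rank coming from products of the degree-$1$ generator, which accounts for the $\max$ with $r(2p-3)+1$ in the statement — the term $r(2p-3)+1 = 2(r(p-1)-1) + (1+r)$ is precisely what one expects from squaring the rank-$1$, degree-$1$ class against itself or against the first nontrivial $E_1$-class.

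Second, I would compute $H^{E_\infty}_{n,d}(\gN_{\bF_p})$. Since $\gN_{\bF_p}$ is the non-unital commutative monoid which is $\bF_p$ in each positive rank, it is $\overline{\bF_p[\bN_{>0}]}$, i.e.\ the free non-unital commutative algebra on a single rank-$1$ generator in degree $0$ is $\bigoplus_{n\geq 1}\bF_p$ but that is the \emph{polynomial} algebra, whereas $\gN_{\bF_p}$ has a one-dimensional piece in each rank, so $\gN_{\bF_p} \cong \bF_p[x]/(x^2-x)$-type object; concretely it is the free $E_\infty$-algebra on $\bF_p$ in bidegree $(1,0)$ modulo the relation identifying $x^2$ with $x$, and its $E_\infty$-homology is computed in $E_k$ (this is the standard computation: $Q^{E_\infty}_{\bL}(\gN_{\bF_p})$ has homology $\bF_p$ in bidegree $(1,0)$ and the reduced homology of configuration-type spaces in higher rank). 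The upshot, as in the analogous computation in \cite{e2cellsI}, is that $H^{E_\infty}_{n,d}(\gN_{\bF_p})$ vanishes for $d<$ something comfortably above the required bound for $n\geq 2$ — in fact it is concentrated along a steeper line — so it contributes nothing below degree $\max(r(2p-3)+1, n+r(p-1)-1)$. Then the long exact sequence gives the claim: in the relevant range both $H^{E_\infty}_{n,d}(\gN_{\bF_p})$ and $H^{E_\infty}_{n,d-1}(\gR_{\bF_p})$ vanish, hence so does $H^{E_\infty}_{n,d}(f)$.

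The main obstacle is the first step: translating the $E_1$-vanishing line into an $E_\infty$-vanishing line with the correct offset, keeping careful track of the low-rank correction terms through the bar spectral sequence. The subtlety is that the rank-$1$ part of $\gR_{\bF_p}$ is $\bF_p[B\mr{GL}_1(\bF_q)]$, which has more homology than just $\bF_p$ in degree $0$ (it has the homology of $\bF_q^\times$, nontrivial mod $p$ in degree $r(p-1)$ and above by \cite[Lemma 16]{quillenfinite}), so one must check that the extra rank-$1$ classes and their products do not produce $E_\infty$-homology below the stated bound; this is where the $r(2p-3)+1$ term genuinely comes from, and getting its value exactly right (rather than a weaker bound) is the delicate point.
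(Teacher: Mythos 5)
Your general architecture — combine the $E_1$-homology vanishing line with the bar/transfer spectral sequence, and handle $\gN_{\bF_p}$ separately via a long exact sequence — is in the right neighbourhood, but there are several concrete gaps, and in one place the explanation is simply wrong.

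First, your account of where the $r(2p-3)+1$ term comes from is incorrect. There is no "rank-$1$, degree-$1$ class" in $\gR_{\bF_p}$: since $\mr{GL}_1(\bF_q) = \bF_q^\times$ has order $q-1 = p^r - 1$, which is coprime to $p$, we have $\gR_{\bF_p}(1) \simeq \bF_p$ concentrated in degree $0$. So there are no low-degree rank-$1$ classes to square, and the offset cannot be accounted for that way. The actual source of $r(2p-3)+1$ is the Friedlander--Parshall theorem (Lemma A.1 of their paper) that $H_{n,d}(\gR_{\bF_p}) = 0$ for $0 < d < r(2p-3)$, which gives $H_{n,d}(f)=0$ for $d < r(2p-3)+1$ on ordinary homology, and this is then promoted to $E_1$-homology via the Hurewicz theorem (Corollary $E_k$.11.12). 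You do not mention Friedlander--Parshall or the Hurewicz step at all, and without them the $\max$ in the statement cannot be produced — the $E_1$-vanishing line from Theorem \ref{thm:gl-finite-coinvariants-simple} alone only gives $d \geq n+r(p-1)-2$ for $n \geq 2$, with no information at low rank other than $n=1$.

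Second, the claim that $H^{E_\infty}_{n,d}(\gN_{\bF_p})$ "vanishes comfortably above the required bound" is unsubstantiated, and this is not a safe assumption. As an $E_1$-algebra $\gN_{\bF_p}$ is free on a rank-$1$ degree-$0$ generator, so its $E_1$-homology is trivially concentrated in bidegree $(1,0)$; but as an $E_\infty$-algebra it is far from free (the free object $\gE_\infty(S^{1,0})$ has $\gE_\infty(S^{1,0})(n) \simeq \bF_p[B\fS_n]$), so $H^{E_\infty}_{n,d}(\gN_{\bF_p})$ has plenty of higher cells needed to kill the homology of symmetric groups. Computing this is a genuine derived computation that your sketch does not perform and cannot simply wave away.

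The paper avoids both problems by doing the long exact sequence at the level of $E_1$-homology of the map $f$, where $\gN_{\bF_p}$'s contribution is trivially a single class at $(1,0)$ (cancelling against the corresponding class of $\gR_{\bF_p}$), and where the Friedlander--Parshall plus Hurewicz input gives the stronger low-rank bound. Only then is the vanishing line transferred from $E_1$ to $E_\infty$ for the map $f$ as a whole, after verifying that $\rho(n) = \max(r(2p-3)+1, n+r(p-1)-1)$ is superadditive (which is what makes the transfer theorem $E_k$.14.2.1 applicable). Doing the transfer for $\gR_{\bF_p}$ alone, as you propose, is awkward precisely because of the exceptional class at $(1,0)$, whereas for $f$ the vanishing function has no exceptions and is cleanly lax monoidal. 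So the key reorganization you are missing is: work relatively (with $f$) at the $E_1$ level, then transfer, rather than computing $E_\infty$-homology of each side.
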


\begin{proof}
Let $S^{1,0}_{\bF_p}$ be the object of $\cat{sMod}^\bN_{\bF_p}$ given by 
\[S^{1,0}_{\bF_p}(n) = \begin{cases} \bF_p[D^1/\partial D^1] & \text{if $n=1$,} \\
0 & \text{otherwise,}\end{cases}\]
where $D^1/\partial D^1$ is considered as a \emph{based} simplicial set. Then on the one hand, $\gN_{\bF_p}$ is weakly equivalent to $\gE_1(S^{1,0}_{\bF_p})$ as an $E_1$-algebra, so its $E_1$-homology vanishes in all bidegrees except $(1,0)$. On the other hand, it is a consequence of \cref{thm:gl-finite-coinvariants-simple} that $H^{E_1}_{n,d}(\gR_{\bF_p})=0$ when $d < n+r(p-1)-2$ and $n \geq 2$.

By \cite[Lemma A.1]{FriedlanderParshall}, $H_{n,d}(\gR_{\bF_p})=0$ for $0 < d < r(2p-3)$, so $H_{n,d}(f) = 0$ for $d < r(2p-3)+1$. Using the Hurewicz theorem for $E_1$-homology (Corollary $E_k$.11.14) and the long exact sequence on $E_1$-homology (equation (10.1) in Section $E_k$.10.1.6), we conclude that $H^{E_1}_{n,d}(f) = 0$ for $d<\max(r(2p-3)+1,n+r(p-1)-1)$.

We wish to deduce the same vanishing line for the $E_\infty$-homology of $f$. The functions 
\[\rho(n) \coloneqq n \qquad \text{and} \qquad 	\sigma(n) \coloneqq \max(r(2p-3)+1,n+r(p-1)-1)\]
can be considered as functors $\bN \to [-\infty,\infty]_{\geq}$, i.e.~an \emph{abstract connectivity} as discussed in Section $E_k$.11.1. Letting $\ast$ denote the Day convolution of functors as in formula (11.1) of Section $E_k$.11.1, these satisfy $\rho \ast \rho \geq \rho$ and $\rho \ast \sigma \geq \sigma$. We now apply Proposition $E_k$.14.5 to the map $f$. Its hypotheses are that $\smash{H^{E_1}_{n,d}}(\gR_{\bF_p}) = 0 =\smash{H^{E_1}_{n,d}}(\gN_{\bF_p})$ for $d<\rho(n)-1$ and that $\smash{H^{E_1}_{n,d}}(f)=0$ for $d<\sigma(n)$, which were verified above, and its conclusion is that for all $k \geq 1$ we have that $\smash{H_{n,d}^{E_k}}(f)=0$ for $d < \sigma(n)$. Letting $k \to \infty$, we conclude that $H^{E_\infty}_{n,d}(f) = 0$ for $d < \sigma(n)$, i.e.\ for $d<\max(r(2p-3)+1,n+r(p-1)-1)$, as required.
\end{proof}

From $\gR_{\bF_p}$ we may construct a unital strictly associative algebra $\overline{\gR}_{\bF_p}$, which is equivalent to the unitalization $\gR_{\bF_p}^+$. Picking a representative $\sigma \colon \smash{S^{1,0}_{\bF_p}} \to \smash{\gR_{\bF_p}}$, we get a corresponding map
\[\sigma \cdot - \colon S^{1,0}_{\bF_p} \otimes \overline{\gR}_{\bF_p} \lra \overline{\gR}_{\bF_p}\]
and we may form its homotopy cofibre $\overline{\gR}_{\bF_p}/\sigma$. The operation $S^{1,0}_{\bF_p} \otimes -$ shifts the rank grading by $1$, and the homology groups of this homotopy cofibre may be identified with the relative homology groups of the stabilisation map
\[H_{n,d}(\overline{\gR}_{\bF_p}/\sigma) \cong H_{d}(\mr{GL}_{n}(\bF_q),\mr{GL}_{n-1}(\bF_q);\bF_p).\]
To prove \cref{thm:main-fp} it thus suffices to prove the following vanishing range for the groups $H_{n,d}(\overline{\gR}_{\bF_p}/\sigma)$.

\begin{theorem}\label{thm:VanishingNot2}
$H_{n,d}(\overline{\gR}_{\bF_p}/\sigma) = 0$ for $d < \max(r(2p-3),n+r(p-1)-2)$.
\end{theorem}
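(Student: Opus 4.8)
The plan is to deduce this from the previous lemma about $H^{E_\infty}_{n,d}(f)$ together with the structure of the target $E_\infty$-algebra $\gN_{\bF_p}$, which is the commutative monoid $\underline{\bF_p}_{>0}$ on $\bN_{>0}$. First I would recall that $\gN_{\bF_p}$ is (as a non-unital $E_\infty$-algebra) equivalent to $\gE_\infty(S^{1,0}_{\bF_p})$ modulo higher cells --- indeed its $E_\infty$-homology is concentrated in bidegree $(1,0)$ --- so that its unitalization $\gN_{\bF_p}^+$ is the free commutative algebra on a single generator $\sigma$ in bidegree $(1,0)$, namely $\bF_p[\sigma]$ with $\sigma$ of rank $1$. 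In particular $\gN_{\bF_p}^+/\sigma \simeq S^{0,0}_{\bF_p} = \bF_p$ concentrated in bidegree $(0,0)$, so $H_{n,d}(\gN^+_{\bF_p}/\sigma) = 0$ for $n > 0$.

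Next I would pass to unitalizations and to the mapping cone on $\sigma$: the map $f\colon \gR_{\bF_p} \to \gN_{\bF_p}$ unitalizes to $f^+\colon \gR^+_{\bF_p} \to \gN^+_{\bF_p}$, compatibly with the self-maps $\sigma\cdot -$ on source and target (since $f$ carries the chosen class $\sigma$ to the generator of $\gN_{\bF_p}$ up to homotopy). Taking vertical homotopy cofibres of $\sigma\cdot-$ therefore gives an induced map $\overline{f}\colon \overline{\gR}_{\bF_p}/\sigma \to \gN^+_{\bF_p}/\sigma \simeq \bF_p$. Since the target is trivial in positive rank, for $n \geq 1$ the group $H_{n,d}(\overline{\gR}_{\bF_p}/\sigma)$ is identified with $H_{n,d}$ of the homotopy fibre of $\overline{f}$, equivalently (up to the usual shift) with the relative homology $H_{n,d}(\overline{f})$. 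So it suffices to bound $H_{n,d}(\overline{f})$.

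The key step is then to relate $H_{n,d}(\overline{f})$ to $H^{E_\infty}_{n,d}(f)$, which we have shown vanishes for $d < \rho(n) := \max(r(2p-3)+1, n+r(p-1)-1)$. Here I would invoke the relationship between $E_\infty$-homology and the homology of the cofibre-of-stabilization, as in the CW-approximation machinery of Section $E_k$.11 and the ``module'' / bar-construction arguments of $E_k$.14--15 (this is the same mechanism that underlies homological stability from $E_\infty$-cells, cf. the treatment in \cite{e2cellsI} and the mapping-class-group applications in \cite{e2cellsII}): a vanishing line $H^{E_\infty}_{n,d}(f) = 0$ for $d < \rho(n)$ with $\rho$ lax monoidal translates, after quotienting by the stabilizing class $\sigma$, into a vanishing line for $H_{n,d}(\overline{\gR}_{\bF_p}/\sigma)$ of the form $d < \rho(n) - 1$. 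Subtracting $1$ from $\rho(n)$ gives exactly $d < \max(r(2p-3), n + r(p-1) - 2)$, which is the claimed range.

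The main obstacle is the last step: making precise the passage from the $E_\infty$-homology vanishing line for $f$ to the vanishing line for $H_{*,*}(\overline{\gR}_{\bF_p}/\sigma)$, i.e.\ correctly accounting for the degree shift of $1$ and verifying that the hypotheses of the relevant transfer/comparison theorem ($E_k$.14.2 and its consequences for modules over $E_\infty$-algebras) are met --- in particular that $\rho$ is lax monoidal so the bar spectral sequence comparison applies, and that the class $\sigma$ one quotients by is indeed the Hurewicz image of the generating $E_\infty$-cell in bidegree $(1,0)$. Once that bookkeeping is in place, the rest is formal.
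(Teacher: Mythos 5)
Your setup (unitalize, pass to the homotopy cofibre of $\sigma\cdot-$, reduce to bounding $H_{n,d}(\overline{f})$) matches the paper's opening moves, and you correctly identify the lax-monoidality of $\rho$ as a necessary ingredient. But your key step --- ``a vanishing line $H^{E_\infty}_{n,d}(f) = 0$ for $d < \rho(n)$ with $\rho$ lax monoidal translates, after quotienting by $\sigma$, into $H_{n,d}(\overline{\gR}_{\bF_p}/\sigma) = 0$ for $d < \rho(n) - 1$'' --- is not a theorem; there is no such off-by-one transfer result in $E_k$.14, which concerns transferring vanishing lines between $E_1$- and $E_\infty$-homology, not between relative $E_\infty$-homology of a map and homology of the cofibre of stabilization.

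The actual content of the proof lives precisely in the ``bookkeeping'' you postpone. After building a relative CW approximation $g\colon \gR_{\bF_p}\to\gC\simeq\gN_{\bF_p}$ with cells in bidegrees $(n_i,d_i)$ satisfying $d_i \geq \max(r(2p-3)+1, n_i+r(p-1)-1)$, one must analyse the cell-attachment spectral sequence of Corollary $E_k$.10.3.5 whose $E^1$-page is $H_{*,*,0}(\overline{\gR}_{\bF_p}/\sigma)\otimes\bigotimes_i W_\infty(\bF_p\{x_i\})$, where attaching an $E_\infty$-cell contributes not just $x_i$ but all its Dyer--Lashof images $Q^I x_i$. The crucial, non-formal lemma is that applying $Q^s$ (resp.\ $\beta Q^s$ when $p$ odd) to an element in bidegree $(n,d)$ with $d \geq n+r(p-1)-1$ produces an element still satisfying that inequality; this verification uses $r(p-1)-1 > 0$, i.e.\ the hypothesis $q\neq 2$, in an essential way, and is false if the offset were negative. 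Without this, the $W_\infty$-algebra factors could contribute classes below the line and the spectral-sequence cancellation argument collapses. Finally, the paper finishes with a proof by contradiction tracking tridegrees: if $H_{n,d}(\overline{\gR}_{\bF_p}/\sigma)\neq 0$ in a minimal bad degree $d$, convergence of the module spectral sequence to the trivial $H_{*,*}(\overline{\gN}_{\bF_p}/\sigma)$ forces a nonzero product of a lower-degree class in $H(\overline{\gR}_{\bF_p}/\sigma)$ with a $W_\infty$-class, and one derives a contradiction from the minimality of $d$ and the inequality above. You need to carry out both the Dyer--Lashof analysis and the contradiction argument; ``the rest is formal'' is exactly where the proof is.
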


\begin{proof} The strategy shall be to understand which $E_\infty$-cells need to be attached to $\gR_{\bF_p}$ to build $\gN_{\bF_p}$. We of course know the homology of $\gN_{\bF_p}/\sigma$, and by understanding the effect of the $E_\infty$-cells that have been added to $\gR_{\bF_p}$ we shall draw the stated conclusion about $\gR_{\bF_p}/\sigma$.
		
By Theorem $E_k$.11.21 we can extend the map $f$ to a diagram of $E_\infty$-algebras
\[\begin{tikzcd} \gR_{\bF_p} \rar{f} \dar{g} & \gN_{\bF_p} \\
\gC \arrow{ru}[swap]{\simeq}, & \end{tikzcd}\]
where $g$ is a relative CW object obtained by attaching $E_\infty$-cells of bidegrees $(n,d)$ satisfying $d \geq \max(r(2p-3)+1,n+r(p-1)-1)$.

The relative CW object $\gC$ is equipped with a skeletal filtration $\mr{sk}(\gC)$ and Theorem $E_k$.10.10 constructs from the filtered object $\overline{\mr{sk}(\gC)}$ a spectral sequence
\[E^1_{n,p,q}(\overline{\gN}_{\bF_p}) = H_{n,p+q,p}\left(\overline{0_* \gR_{\bF_p} \vee^{E_\infty} \bigvee^{E_\infty}_{i \in I} S^{n_i,d_i,d_i}_{\bF_p} x_i}\right) \Longrightarrow H_{n,p+q}(\overline{\gN}_{\bF_p}),\]
where $(n_i,d_i)$ satisfy $n_i \geq 1$ and $d_i \geq \max(r(2p-3)+1,n_i +r(p-1)-1)$. The $E^1$-page can be expressed quite concretely. By the calculation of the homology of free $E^+_\infty$-spaces there is a monad $W_\infty$ on the category of (possibly multiply) graded vector spaces such that $H_*(\gE^+_\infty(X);\bF_p) = W_\infty(H_{*}(X;\bF_p))$. It follows that the homology of any $E^+_\infty$-algebra has the structure of a $W_\infty$-algebra. Explicitly, $W_\infty$ is given by the ``free unstable algebra over the Dyer--Lashof algebra", and the elements of $W_\infty(V)$ are products of certain Dyer--Lashof operations $Q^I$ applied to elements of $V$; a detailed exposition is given in Section $E_k$.16. Furthermore, the homology of a coproduct of $E^+_\infty$-algebras is given by the tensor product of their homologies (Corollary $E_k$.16.5). Using this, we may express the $E^1$-page as
\[E^1_{*,*,*}(\overline{\gN}_{\bF_p}) = H_{*,0,*}(\overline{\gR}_{\bF_p}) \otimes \bigotimes_{i \in I} W_\infty(\bF_p \{ x_i\}[n_i, d_i, d_i]).\]
In this expression, $H_{*,0,*}(\overline{\gR}_{\bF_p})$ means we consider the bigraded group $H_{*,*}(\overline{\gR}_{\bF_p})$ as trigraded by concentrating it in middle grading $0$. We will now analyse the degrees in which classes from the second tensor factor may lie, using the description of $W_\infty(\bF_p \{ x_i\}[n_i, d_i, d_i])$ as products of classes $Q^I (x_i)$ for certain $I$'s. The proof requires detailed information about which Dyer--Lashof operations occur, and how they affect degrees; we refer to Section $E_k$.16 for all the required background.

\vspace{.5em}

\noindent\textbf{Claim.} All elements of $W_\infty(\bF_p \{x_i\}[n_i,d_i,d_i])$ lie in tridegrees $(n,p,q)$ satisfying $p+q \geq \max(r(2p-3)+1,n+r(p-1)-1)$, and so all elements in their tensor products do too. 

\begin{proof}[Proof of claim]

We may forget the filtration degree $q$ for the purposes of this claim, and work with the bigrading $(n,d) = (n,p+q)$. The generator $x_i$ lies in bidegree $(n_i, d_i)$, so by the inequality $d_i \geq \max(r(2p-3)+1,n_i +r(p-1)-1)$ it satisfies the required condition. Let us investigate how the operations $Q^I$ affect bidegrees.

\vspace{1ex}

\noindent\textbf{The case $p$ odd}. Applying a Dyer--Lashof operation $Q^s$ changes bidegrees by $(n,d) \mapsto (pn,d+2s(p-1))$ with $s \geq d$, and $\beta Q^s$ by $(n,d) \mapsto (pn,d+2s(p-1)-1)$ with $s>d$. This does not decrease $d$. We claim it also preserves the property of satisfying the inequality $d \geq n+r(p-1)-1$. It suffices to verify this in the following cases, as these are the smallest $s$ such that $Q^s(x)$ can be non-zero:
\begin{enumerate}[(i)]
		\item the element $Q^{k}(x)$ if $|x|=(n,2k-\epsilon)$ with $\epsilon=0,1$: if $2k-\epsilon \geq n+r(p-1)-1$ then 
\begin{align*}
	(2k-\epsilon)+2k(p-1) &= p(2k-\epsilon)+\epsilon(p-1) \\
	&\geq p(2k-\epsilon) \\
	&\geq p(n+r(p-1)-1) \\
	&\geq pn +r(p-1)-1 \quad \text{as $r(p-1)-1 > 0$.}
\end{align*}

		\item the element $\beta Q^{k+1}(x)$ if $|x|=(n,2k+2-\epsilon)$ with $\epsilon=0,1$: if $2k+2-\epsilon \geq n+r(p-1)-1$ then similarly \
		\begin{align*}
	(2k+2-\epsilon)+2(k+1)(p-1)-1 &= p(2k+2-\epsilon)+\epsilon(p-1)-1 \\
	&\geq p(2k+2-\epsilon)-1\\
	&\geq p(n+r(p-1)-1)-1\\
	&\geq pn+r(p-1)-1.
\end{align*}
\end{enumerate}

\vspace{1ex}

\noindent\textbf{The case $p=2$}. The operation $Q^s$ changes bidegrees by $(n,d) \mapsto (2n,d+s)$ for $s>d$. Again it does not decrease $d$ and we claim it preserves the property of satisfying the inequality $d \geq n+r-1$. As above it suffices to verify this for the smallest $s$ which can be non-zero, $Q^{d}(x)$ for $|x|=(n,d)$ (which equals squaring). Assuming $d \geq n+r-1$, we get 
	\[2d \geq 2(n+r-1) \geq 2n+r-1.\]

\vspace{1ex}

It follows that the bidegrees of all classes $Q^I x_i$ satisfy the required condition, but then all products of such classes do too.
\end{proof}

From the quotient of the filtered object $\overline{\mr{sk}(\gC)}$ by $\sigma$, we obtain the spectral sequence
\[E^1_{n,p,q}(\overline{\gN}_{\bF_p}/\sigma) = {H}_{n,p+q,p}\left(\left(\overline{0_* \gR_{\bF_p} \vee^{E_\infty} \bigvee^{E_\infty}_{i \in I} S^{n_i,d_i,d_i}_{\bF_p} x_i}\right)/\sigma\right) \Rightarrow H_{n,p+q}(\overline{\gN}_{\bF_p}/\sigma),\]
which converges to $0$ for $(n,p+q) \neq (0,0)$ and is a module over the spectral sequence $\{E^r_{n,p,q}\}$. As above we may express its $E^1$-page as
\[E^1_{*,*,*}(\overline{\gN}_{\bF_p}/\sigma) = H_{*,0,*}(\overline{\gR}_{\bF_p}/\sigma) \otimes \bigotimes_{i \in I} W_\infty(\bF_p \{ x_i\}[n_i, d_i, d_i]).\]

Our goal is to show that $H_{n,d}(\overline{\gR}_{\bF_p}/\sigma) = 0$ for $d < \max(r(2p-3),n+r(p-1)-2)$, with the exception of $H_{0,0}(\overline{\gR}_{\bF_p}/\sigma) = \bF_p$. For $d < r(2p-3)$ this follows from the theorem of Friedlander--Parshall \cite[Lemma A.1]{FriedlanderParshall}.

For $d \geq r(2p-3)$ we give a proof by contradiction. Suppose that $d$ is the smallest non-zero degree in which there is an $n$ such that $H_{n,d}(\overline{\gR}_{\bF_p}/\sigma) \neq 0$ for $d < n+r(p-1)-2$. Then $E^1_{n,0,d}(\overline{\gN}_{\bF_p}/\sigma) \neq 0$, and since the spectral sequence $\{E^r_{n,p,q}(\overline{\gN}_{\bF_p}/\sigma)\}$ has to converge to 0 with the exception of tridegree $(0,0,0)$ and the $d^r$-differential has tridegree  $(0,-r,r-1)$, it must be that $E^1_{n, \ell,d+1-\ell}(\overline{\gN}_{\bF_p}/\sigma) \neq 0$ for some $\ell \geq 1$. The group $E^1_{n,\ell,d+1-\ell}(\overline{\gN}_{\bF_p}/\sigma)$ is spanned by products of an element of $H_{n',0,d'}(\overline{\gR}_{\bF_p}/\sigma)$ with an element of the tensor product of $W_\infty$-algebras of tridegree $(n'',\ell,d'')$, which must satisfy $d'' \geq \ell \geq 1$.

If $(n',d') = (0,0)$ then we have $(n'',\ell,d'') = (n,\ell,d+1)$, which is impossible because then $d'' = d+1 < n+r(p-1)-1 = n''+r(p-1)-1$, but all elements in the tensor product of $W_\infty$-algebras satisfy $d'' \geq n''+r(p-1)-1$. 

On the other hand if $(n',d') \neq (0,0)$ then, as $d'' \geq 1$ we have $d' < d$ and so as $d$ was assumed to be minimal such that there is an $n$ such that $H_{n,d}(\overline{\gR}_{\bF_p}/\sigma) \neq 0$ for $d < n+r(p-1)-2$, we must have that $d' \geq n'+r(p-1)-2$. As $d'+d'' < n'+n'' + r(p-1)-2$ it follows that $d'' < n''$, but this contradicts $d'' \geq n'' + r(p-1)-1$.
\end{proof}

\subsection{Sharpness} \label{sec:fp-sharpness} In this section we explain in what sense \cref{thm:main-fp} and its consequences in \cref{cor:vanishing} are optimal.

We have already pointed out that $H_{n,d}(\gR_{\bF_p})=0$ for $0 < d < r(2p-3)$, by \cite[Lemma A.1]{FriedlanderParshall}, which can be added to \cref{cor:vanishing} to obtain stronger vanishing results. Just beyond this range, Sprehn \cite[Theorem 1]{SprehnPaper} has shown that \[H_{n, r(2p-3)}(\gR_{\bF_p}) \neq 0 \qquad \text{ for $2 \leq n \leq p$.}\]
When $r=1$ these groups are one-dimensional and $H_{n,2p-3}(\gR_{\bF_p})$ vanishes for $n > p$ \cite[Theorem 3.1]{SprehnThesis}. Thus it is known that the last rank in which $H_{n,2p-3}(\gR_{\bF_p})$ does not vanish is $p$, and our \cref{cor:vanishing} says that $H_{n,2p-3}(\gR_{\bF_p})$ vanishes for $n+p-2>2p-2$ or equivalently $n>p$. Thus for $r=1$, in \cref{thm:main-fp} neither the offset nor the slope can be improved while keeping the other fixed.

This also shows our vanishing results for the homology of $\mr{St}^{E_1}(\bF_p^n) \otimes \bF_p$ are sharp. For the stabilisation of the non-zero class in $H_{p,2p-3}(\gR_{\bF_p})$ to vanish in $H_{p+1,2p-3}(\gR_{\bF_p})$, there must be an $E_1$-cell in bidegree $(p+1,2p-2)$ and hence $\smash{H^{E_1}_{p+1,2p-2}}(\gR_{\bF_p}) \neq 0$. Thus $H_{p-2}(\mr{GL}_{p+1}(\bF_p);\smash{\mr{St}^{E_1}}(\bF_p^{p+1})\otimes \bF_p) \neq 0$, which is just outside our vanishing line of $d<p-2$ for $H_d(\mr{GL}_{n}(\bF_p);\mr{St}^{E_1}(\bF_p^{n})\otimes \bF_p)$.

\begin{remark}When $r=1$, following Sprehn's proof that $H_{p, 2p-3}(\gR_{\bF_p}) = \bF_p$, one can show that this group is generated by $\beta Q^1(\sigma)$. This implies that $\smash{H_{p, 2p-3}^{E_\infty}}(\gR_{\bF_p}) = 0$, which is \emph{not} a consequence of our vanishing line $\smash{H_{n,d}^{E_1}}(\gR_{\bF_p})=0$ for $d < n+(p-1)-2$.
\end{remark}

\begin{remark}
Further unstable classes in bidegrees $(n,d)=(p^N, r(2p^N-2p^{N-1}-1))$ for $N \geq 2$ were produced by Lahtinen--Sprehn \cite{Lahtinen-Sprehn}. These classes are outside the vanishing range of \cref{cor:vanishing} because $r(2p^N-2p^{N-1}-1) \geq p^N+r(p-1)-3$. For example, taking $r=1$, $p=3$, and $N=2$, their class is in bidegree $(9,11)$ and our vanishing range for $n=9$ is $d<9$. In general their classes lie above a line of slope $\frac{2rp-2r}{p}$.
\end{remark}

\section{General linear group of the field $\bF_2$} 

The proof of \cref{thm:main-f2} for $\mr{GL}_n(\bF_2)$ is substantially different. It only uses that $S^{E_1}(P)$ is $(\dim(P)-2)$-spherical and low rank computations. It does not involve any computations of homology with coefficients in the $E_1$-Steinberg module.

\subsection{Low rank computations} \label{sec:f2-low-rank} We will do some computations in low rank and low homological degree of the homology groups $H_{n,d}(\gR_{\bF_2}) \cong H_d(\mr{GL}_n(\bF_2);\bF_2)$, taking particular care to describe these in terms of Dyer-Lashof operations. Before doing so we give a variation of a result of Quillen \cite[Corollary 1]{quillenfinite}.

\begin{lemma}The iterated stabilisation maps 
	\begin{align*}\sigma^{k+1} \colon \mr{GL}_{n-1}(\bF_{2}) &\longrightarrow \mr{GL}_{k+n}(\bF_{2}) \\
	\sigma^{k} \colon \mr{GL}_n(\bF_{2}) &\longrightarrow \mr{GL}_{k+n}(\bF_{2})\end{align*}
	have the same image on $\bF_2$-homology in degrees $d<k+1$.\end{lemma}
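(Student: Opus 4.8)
Since $\sigma^{k+1}=\sigma^k\circ\sigma$ with $\sigma\colon\mr{GL}_{n-1}(\bF_2)\to\mr{GL}_n(\bF_2)$ the stabilisation map, the image of $\sigma^{k+1}_*$ is contained in that of $\sigma^k_*$ in all degrees, so only the reverse inclusion in degrees $d<k+1$ requires an argument. The plan is to run a finite version of Quillen's argument that $\mr{GL}_\infty(\bF_2)$ is $\bF_2$-acyclic \cite[Corollary 1]{quillenfinite}, where infinitely many stabilisations kill every positive-degree class; here $k$ stabilisations will do so in degrees $\le k$, the absorption being carried out along a single coordinate (the ``first row'') rather than via a field extension. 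First I would pass to an affine subgroup by a transfer: the stabiliser $\mr{Aff}_{n-1}(\bF_2):=\mr{GL}_{n-1}(\bF_2)\ltimes\bF_2^{n-1}$ of the basis vector $e_n$ in $\mr{GL}_n(\bF_2)$ has odd index $2^n-1$, so $H_*(\mr{Aff}_{n-1}(\bF_2);\bF_2)\to H_*(\mr{GL}_n(\bF_2);\bF_2)$ is surjective, and hence $\mathrm{Im}(\sigma^k_*\colon H_d(\mr{GL}_n(\bF_2);\bF_2)\to H_d(\mr{GL}_{n+k}(\bF_2);\bF_2))$ equals the image of $H_d(\mr{Aff}_{n-1}(\bF_2);\bF_2)$ under the induced map to $H_d(\mr{GL}_{n+k}(\bF_2);\bF_2)$. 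Under $\sigma^k$ the Levi factor $\mr{GL}_{n-1}(\bF_2)$ of $\mr{Aff}_{n-1}(\bF_2)$ maps in by $\sigma^{k+1}$, while the unipotent radical $\bF_2^{n-1}$ maps to the abelian subgroup $U^{(0)}\le\mr{GL}_{n+k}(\bF_2)$ of matrices differing from the identity only in the entries $(n,1),\dots,(n,n-1)$. Thus the statement reduces to: the images of $H_d(\mr{Aff}_{n-1}(\bF_2);\bF_2)$ and of $H_d(\mr{GL}_{n-1}(\bF_2);\bF_2)$ in $H_d(\mr{GL}_{n+k}(\bF_2);\bF_2)$ coincide for $d<k+1$.

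For this absorption I would use the $k+1$ pairwise commuting abelian subgroups $U^{(0)},\dots,U^{(k)}\le\mr{GL}_{n+k}(\bF_2)$, where $U^{(j)}$ consists of matrices differing from the identity only in the entries $(n+j,1),\dots,(n+j,n-1)$: each $U^{(j)}$ is conjugate to $U^{(0)}$ by a permutation of the coordinates $n,\dots,n+k$ centralising $\sigma^{k+1}(\mr{GL}_{n-1}(\bF_2))$, and $\sigma^{k+1}(\mr{GL}_{n-1}(\bF_2))$ normalises each $U^{(j)}$, acting diagonally on $U^{(0)}\times\cdots\times U^{(k)}$. A block matrix $\diag(I_{n-1},P)$ with $P\in\mr{GL}_{k+1}(\bF_2)$ sending $e_1$ to $(1,\dots,1)$ centralises $\sigma^{k+1}(\mr{GL}_{n-1}(\bF_2))$ and conjugates the inclusion $\mr{Aff}_{n-1}(\bF_2)=\mr{GL}_{n-1}(\bF_2)\ltimes U^{(0)}\hookrightarrow\mr{GL}_{n+k}(\bF_2)$ to the inclusion of $\mr{GL}_{n-1}(\bF_2)\ltimes(U^{(0)}\times\cdots\times U^{(k)})$ restricted to the diagonal copy of $U^{(0)}$; so both induce the same map on $\bF_2$-homology. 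Now the Eilenberg swindle enters: in the Serre spectral sequence of $\bF_2^{n-1}\rtimes\mr{GL}_{n-1}(\bF_2)$ a class not already carried by the bottom row has leading term in $H_s(\mr{GL}_{n-1}(\bF_2);H_t(\bF_2^{n-1};\bF_2))$ with $1\le t\le d\le k$, and the $(k+1)$-fold coproduct of any positive-degree class of degree $\le k$ in $H_*(\bF_2^{n-1};\bF_2)$ is a sum of terms each with a tensor factor in degree $0$; pushed along the diagonal such a term lands in the image of a subgroup obtained by deleting one of the coordinates $n,\dots,n+k$, and these are all conjugate by coordinate permutations centralising $\sigma^{k+1}(\mr{GL}_{n-1}(\bF_2))$, hence have equal images in $\bF_2$-homology, and iterating the reduction drives the class down to the Levi. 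This is the ``first-row'' counterpart of Quillen's absorption, with $k+1$ parallel copies in place of infinitely many Frobenius-conjugates, which is why the conclusion only holds through degree $k$.

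The hard part is precisely this last step: one must make the coproduct argument work in the relative, spectral-sequence setting, simultaneously tracking the diagonal $\mr{GL}_{n-1}(\bF_2)$-action on the $U^{(j)}$ and the coordinate-permutation symmetry that becomes available only after mapping into $\mr{GL}_{n+k}(\bF_2)$, and verifying that their interplay really does collapse the Serre filtration onto the bottom row throughout the range $d<k+1$. The transfer reduction and the explicit conjugacies among subgroups of $\mr{GL}_{n+k}(\bF_2)$ are routine.
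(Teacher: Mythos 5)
Your approach is genuinely different from the paper's, but the ``absorption'' step is where it breaks, and I don't think the gap you flag is fillable as stated. The paper passes to the $2$-Sylow subgroup $B_n(\bF_2)$, embeds it in the group $H \subset B_n(\bF_{2^{k+1}})$ of matrices with entries below the first row in $\bF_2$, and then invokes Quillen's Theorem 6 from \cite{quillenfinite}: the kernel $\bF_{2^{k+1}}^\times \ltimes \bF_{2^{k+1}}^{n-1}$ of $H \twoheadrightarrow B_{n-1}(\bF_2)$ has vanishing reduced $\bF_2$-homology in degrees $< k+1$, precisely because $\bF_{2^{k+1}}^\times$ (cyclic of odd order) acts freely on the nonzero vectors and kills the coinvariants. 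This multiplicative structure is what annihilates the positive Serre filtration; your setup has nothing playing the role of $\bF_{2^{k+1}}^\times$.

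The coproduct pigeonhole, as you set it up, is circular once you use the conjugation symmetry. Take the smallest nontrivial case $n=2$ ($\mr{GL}_{n-1}$ trivial, $\mr{Aff}_1 = \bF_2$), $k=2$, $d=2$. Then $\Delta^{(2)}_*(x^2) = x_0^2 + x_1^2 + x_2^2$ in $H_2\bigl((\bF_2)^3\bigr)$, each term carried by one $U^{(j)}$. Pushing to $H_2(\mr{GL}_4(\bF_2);\bF_2)$, the permutation conjugacy of the $U^{(j)}$ makes all three terms equal, so the image of $\Delta_*(x^2)$ is $3\cdot(\text{image of }x^2 \text{ via } U^{(0)}) = \text{image of }x^2$; but the $\diag(I,P)$-conjugacy also says the image of $\Delta_*(x^2)$ \emph{is} the image of $x^2$ via $U^{(0)}$. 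You get $y=y$. More structurally: the pigeonhole shows the image lands in $\sum_i \mr{im}\bigl(\mr{GL}_{n-1}\ltimes\prod_{j\neq i}U^{(j)}\bigr)$, which (after a coordinate permutation) lies in the image of a copy of $\mr{GL}_{n-1}\ltimes(\bF_2^{n-1})^{k}$ inside $\mr{GL}_{n+k-1}$; but now you have only $k$ unipotent factors and the degree can still be as large as $k$, so the pigeonhole no longer applies and there is no induction to close --- the ``iterating the reduction'' step has no base to stand on. The true content of the statement in the range (e.g.\ that $\sigma^2 Q^2(\sigma)=0$ in $H_2(\mr{GL}_4(\bF_2);\bF_2)$) is exactly the output of Quillen's Theorem 6, and I don't see how to recover it from the diagonal alone. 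You would need a genuine extra input --- something acting freely on the unipotent radical --- to make the affine/diagonal route go through.
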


\begin{proof}
	The $2$-Sylow subgroup of $\mr{GL}_n(\bF_2)$ is given by the group $B_n(\bF_2) \subset \mr{GL}_n(\bF_2)$ of upper triangular matrices. We shall define a larger group $H$ containing this group. Choosing an inclusion $\bF_2 \hookrightarrow \bF_{2^{k+1}}$ we may consider $\bF_2$ as a subfield of $\bF_{2^{k+1}}$. Then $H$ is defined to be the subgroup of $B_n(\bF_{2^{k+1}})$ of matrices such that all entries below the first row lie in $\bF_2 \subset \bF_{2^{k+1}}$, that is
	\[H \coloneqq \left[\begin{array}{@{}c|c@{}}\bF_{2^{k+1}}^\times & (\bF_{2^{k+1}})^{n-1} \\ \hline
	0 & B_{n-1}(\bF_2)\end{array}\right].\]
	This contains $B_n(\bF_2)$ as the subgroup of matrices all of whose entries lie in $\bF_2 \subset \bF_{2^{k+1}}$.
	
	By considering $\bF_{2^{k+1}}$ as a $(k+1)$-dimensional $\bF_2$-vector space, once we pick a basis we get a homomorphism $\phi \colon H \to \mr{GL}_{k+n}(\bF_2)$. If we pick this basis such that $1 \in \bF_{2^{k+1}}$ is the last basis vector, then because $1$ is the only unit in $\bF_2$ the following diagram commutes:
	\begin{equation}\label{eq:stabdiag} \begin{tikzcd} B_{n-1}(\bF_2) \rar \dar[hook] & B_n(\bF_2) \rar \dar[hook] & H \dar[hook]{\phi} \\[-2pt]
	\mr{GL}_{n-1}(\bF_2) \rar{\sigma} & \mr{GL}_{n}(\bF_2) \rar{\sigma^{k}} & \mr{GL}_{n+k}(\bF_2). \end{tikzcd} \end{equation}
	
	We claim that the inclusion $B_{n-1}(\bF_2) \hookrightarrow B_n(\bF_2) \hookrightarrow H$ is an isomorphism on $\bF_2$-homology in degrees $d<k+1$. To do so, we use the commutative diagram
	\[\begin{tikzcd} \{e\} \dar \rar & \bF_{2^{k+1}}^\times \ltimes \bF_{2^{k+1}}^{n-1} \dar \\[-3pt]
	B_{n-1}(\bF_2) \rar[hook] \dar[equals] & H \dar[two heads] \\[-3pt]
	B_{n-1}(\bF_2) \rar[equals] & B_{n-1}(\bF_2).\end{tikzcd}\]
	Thus the claim follows by comparing the Serre spectral sequences for the two vertical sequences, as soon as we recall that the $\bF_2$-homology of the group $\bF_{2^{k+1}}^\times \ltimes \bF_{2^{k+1}}^{n-1}$ vanishes for $d<k+1$ by \cite[Theorem 6]{quillenfinite}.
	
	Thus in \eqref{eq:stabdiag}, the first two vertical maps are surjective on homology with $\bF_2$-coefficients, while the composite of the top two arrows is an $\bF_2$-homology isomorphism in degrees $d<k+1$.	The result follows by a diagram chase.
\end{proof}

\begin{corollary}\label{cor.stabilisation-vanishing} $\sigma^k \cdot - \colon H_{n,d}(\gR_{\bF_2}) \to H_{n+k,d}(\gR_{\bF_2})$ vanishes for $0<d<k+1$.
\end{corollary}

\begin{proof}This is a proof by contradiction. Let $x$ be an element of degree $0<d<k+1$ and of minimal rank $n$ such that $\sigma^k x \neq 0$. As $d>0$ we must have $n \geq 1$, as $H_{0,d}(\gR_{\bF_2})=0$. By the previous lemma there is an element $y$ in rank $n-1 \geq 0$ such that $\sigma^k x = \sigma^{k+1} y$, but $\sigma^k y = 0$ as $n$ was minimal.
\end{proof}

\begin{figure}[h]
	\begin{tikzpicture}
	\begin{scope}
	\draw (-1,0)--(9.5,0);
	\draw (0,-1) -- (0,2.5);
	
	\foreach \s in {0,...,2}
	{
		\draw [dotted] (-.5,\s)--(9.5,\s);
		\node [fill=white] at (-.25,\s) [left] {\tiny $\s$};
	}
	
	\foreach \s in {1,...,4}
	{
		\draw [dotted] ({2*\s},-0.5)--({2*\s},2.5);
		\node [fill=white] at ({2*\s},-.5) {\tiny $\s$};
	}
	\node [fill=white] at (0,-.5) {\tiny 0};
	\node [fill=white] at ({2*1},0) {$\sigma$};
	\node [fill=white] at ({2*2},0) {$\sigma^2$};
	\node [fill=white] at ({2*3},0) {$\sigma^3$};
	\node [fill=white] at ({2*4},0) {$\sigma^4$};
	\node [fill=white] at ({2*2},1) {$Q^1(\sigma)$};
	\node [fill=white] at ({2*2},2) {$Q^2(\sigma)$};
	\node [fill=white] at ({2*3},2) {$\sigma Q^2(\sigma)$};
	\node [fill=white] at ({2*4},2) {$(Q^1(\sigma))^2$};

	\node [fill=white] at (-.5,-.5) {$\nicefrac{d}{g}$};
	\end{scope}
	\end{tikzpicture}
	\caption{The additive generators of the $\bF_2$-homology of $\mr{GL}_n(\bF_2)$ for low degree and low rank.}
	\label{fig:f2lowrank}
\end{figure}

\begin{lemma}\label{lem:f2-low-rank}See \cref{fig:f2lowrank} for a depiction of the following computations of $H_{n,d}(\gR_{\bF_2}) = H_d(\mr{GL}_n(\bF_2);\bF_2)$:
	\begin{enumerate}[(i)]
		\item \label{enum:f2-1d} $H_{1,d}(\gR_{\bF_2}) = 0$ for all $d >0$.
		\item \label{enum:f2-2d} $H_{2,d}(\gR_{\bF_2}) = \bF_2$ for all $d>0$, generated by $Q^d(\sigma)$.
		\item \label{enum:f2-3d} $H_{3,1}(\gR_{\bF_2}) = 0$, and $H_{3,2}(\gR_{\bF_2}) = \bF_2$, generated by $\sigma Q^2(\sigma)$.
		\item \label{enum:f2-4d} $H_{4,1}(\gR_{\bF_2}) = 0$, and $H_{4,2}(\gR_{\bF_2}) = \bF_2$, generated by $(Q^1(\sigma))^2$.
		\item \label{enum:f2-stab} The map $\sigma \cdot - \colon H_{3,2}(\gR_{\bF_2}) \to H_{4,2}(\gR_{\bF_2})$ is zero. 
	\end{enumerate}
\end{lemma}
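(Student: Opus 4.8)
The plan is to work throughout with the identification $H_{n,d}(\gR_{\bF_2})\cong H_d(\mr{GL}_n(\bF_2);\bF_2)$ for $n>0$, writing $\sigma\in H_{1,0}(\gR_{\bF_2})$ for the generator, and to keep track of the Dyer--Lashof classes via the permutation-matrix subgroups $\fS_n\hookrightarrow\mr{GL}_n(\bF_2)$: by construction $Q^d\sigma\in H_{2,d}(\gR_{\bF_2})$ is the image of the generator of $H_d(\fS_2;\bF_2)$ under $B\fS_2\to B\mr{GL}_2(\bF_2)$, and products of such classes are images of external products under $B(\fS_a\times\fS_b)\to B\mr{GL}_a(\bF_2)\times B\mr{GL}_b(\bF_2)\to B\mr{GL}_{a+b}(\bF_2)$. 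Part (i) is immediate since $\mr{GL}_1(\bF_2)$ is trivial. For (ii), $\mr{GL}_2(\bF_2)\cong\fS_3$, and the Lyndon--Hochschild--Serre spectral sequence of $1\to C_3\to\fS_3\to C_2\to1$ collapses because the $\bF_2$-homology of $C_3$ vanishes in positive degrees; hence the section $\fS_2=C_2\hookrightarrow\fS_3$ induces an isomorphism on $\bF_2$-homology, so $H_{2,d}(\gR_{\bF_2})\cong H_d(\fS_2;\bF_2)=\bF_2\{Q^d\sigma\}$.

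The substance is in (iii) and (iv). In each case $\mr{GL}_n(\bF_2)$ is perfect, so $H_{n,1}=0$; moreover its Schur multiplier $H_2(\mr{GL}_n(\bF_2);\Z)$ is $\Z/2$ (via the exceptional isomorphisms $\mr{GL}_3(\bF_2)\cong\mr{PSL}_2(\bF_7)$ and $\mr{GL}_4(\bF_2)\cong A_8$), so $H_2(\mr{GL}_n(\bF_2);\bF_2)\cong\bF_2$ and the dual group $H^2(\mr{GL}_n(\bF_2);\bF_2)$ is generated by the mod-$2$ class $e$ of the universal central extension. It then remains to show the displayed monomial is nonzero, which I would check by restricting $e$ to a suitable elementary abelian $2$-subgroup. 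For (iii): by (ii), $\sigma Q^2\sigma$ is the image of the generator of $H_{2,2}(\gR_{\bF_2})=H_2(\mr{GL}_2(\bF_2);\bF_2)$ under the stabilization $\mr{GL}_2(\bF_2)\hookrightarrow\mr{GL}_3(\bF_2)$, and that generator is the image of the generator of $H_2(C_2;\bF_2)$ for the permutation subgroup $C_2=\fS_2\subset\mr{GL}_2(\bF_2)$; so $\sigma Q^2\sigma$ is the image of $H_2(C_2;\bF_2)$ under the inclusion of a transvection subgroup $C_2\subset\mr{GL}_3(\bF_2)$. A transvection is an involution $\bar g$ of $\mr{PSL}_2(\bF_7)$; any preimage $g$ in $\mr{SL}_2(\bF_7)$ satisfies $g^2=\pm I$, and $g^2=I$ forces $g=\pm I$, so the preimage of $\langle\bar g\rangle$ is cyclic of order $4$. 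Hence $e$ restricts nontrivially to $C_2$, and therefore $\sigma Q^2\sigma\neq0$.

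For (iv): by (ii) again, $(Q^1\sigma)^2$ is the image of the external square $u\times u\in H_2(C_2^2;\bF_2)$ of the generator $u\in H_1(C_2;\bF_2)$ under $C_2\times C_2=\fS_2\times\fS_2\hookrightarrow\mr{GL}_2(\bF_2)\times\mr{GL}_2(\bF_2)\to\mr{GL}_4(\bF_2)$, whose image $\langle\tau_1,\tau_2\rangle\cong C_2^2$ has $\tau_1,\tau_2$ transvections and $\mr{rank}(\tau_1\tau_2-I)=2$. Under $\mr{GL}_4(\bF_2)\cong A_8$, comparison of conjugacy-class sizes ($105$ versus $210$) forces the transvections to correspond to permutations of cycle type $2^4$ and the rank-$2$ involution to type $2^2$; in the universal central extension $2.A_8$ a $2^4$-element lifts to an involution while a $2^2$-element lifts to an element of order $4$, so the lifts $\tilde\tau_1,\tilde\tau_2$ cannot commute --- otherwise $(\tilde\tau_1\tilde\tau_2)^2=\tilde\tau_1^2\tilde\tau_2^2=1$ would contradict that $\tilde\tau_1\tilde\tau_2$ lifts the $2^2$-element $\tau_1\tau_2$. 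Therefore the pullback of the universal central extension to $C_2^2$ is non-abelian, i.e.\ its class in $H^2(C_2^2;\bF_2)=\bF_2\{a^2,ab,b^2\}$ is $ab$, which is K\"unneth-dual to $u\times u$; so $(Q^1\sigma)^2\neq0$, and it generates $H_{4,2}(\gR_{\bF_2})$.

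Finally, (v) is formal given (iii): $H_{3,2}(\gR_{\bF_2})$ is spanned by $\sigma Q^2\sigma$, and $\sigma\cdot(\sigma Q^2\sigma)=\sigma^2\cdot Q^2\sigma$ is the image of the generator $Q^2\sigma$ of $H_{2,2}(\gR_{\bF_2})$ under $\sigma^2\cdot-\colon H_{2,2}(\gR_{\bF_2})\to H_{4,2}(\gR_{\bF_2})$, which vanishes by Corollary \ref{cor.stabilization-vanishing} since $0<2<2+1$. The main obstacle is the nonvanishing step in (iii) and (iv): identifying the abstract generator of $H_2(\mr{GL}_n(\bF_2);\bF_2)$ with a Dyer--Lashof monomial forces one to invoke the exceptional isomorphisms $\mr{GL}_3(\bF_2)\cong\mr{PSL}_2(\bF_7)$ and $\mr{GL}_4(\bF_2)\cong A_8$, to recognize the relevant permutation-matrix subgroups as transvections and pin down their cycle types in $A_8$, and to compute how these subgroups lift to the universal central extensions $\mr{SL}_2(\bF_7)$ and $2.A_8$.
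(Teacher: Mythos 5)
Your proof is correct and takes a genuinely different route from the paper's for the substantive parts (iii) and (iv). The paper determines the groups $H_2(\mr{GL}_3(\bF_2);\bF_2)$ and $H_2(\mr{GL}_4(\bF_2);\bF_2)$ by citing cohomology-ring computations (Tezuka--Yagita for $\mr{GL}_3(\bF_2)$ and Adem--Milgram for $A_8$), and then identifies the Dyer--Lashof monomials by computing Stiefel--Whitney classes of the natural permutation representation $\rho_n$ restricted to $\fS_2$ (resp.\ $\fS_2 \times \fS_2$) and pairing with $w_2$. You instead pin down the abstract groups by perfectness together with the Schur multipliers $H_2(\mr{GL}_3(\bF_2);\bZ) \cong H_2(\mr{GL}_4(\bF_2);\bZ) \cong \bZ/2$ via the exceptional isomorphisms with $\mr{PSL}_2(\bF_7)$ and $A_8$, and then detect non-vanishing of the monomial by restricting the universal-central-extension class $e$ to the elementary abelian subgroup carrying it: in (iii) via the elementary observation that square roots of $\pm I$ in $\mr{SL}_2(\bF_7)$ are constrained by the determinant, and in (iv) via the lifting behavior of cycle types $2^4$ and $2^2$ in $2.A_8$ together with the commutator criterion for the $ab$-component of a class in $H^2(C_2^2;\bF_2)$. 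The Stiefel--Whitney route has the advantage that the relevant characteristic classes are computed uniformly by one short Whitney-sum calculation and require no case-by-case input about covers; your route has the advantage of bypassing the citations to the cohomology rings of $\mr{GL}_3(\bF_2)$ and $A_8$ entirely. The one genuine input you assert without proof is that in $2.A_8$ a $2^4$-element lifts to an involution while a $2^2$-element lifts to an element of order $4$; this is correct (it follows from the Clifford-algebra computation $(e_1\cdots e_k)^2 = (-1)^{k(k+1)/2}$ in $\mr{Spin}(8)$, giving order $2$ for $k=4$ and order $4$ for $k=2$), but since your contradiction in (iv) hinges on this specific assignment of orders rather than merely on the two classes lifting differently, it would be worth including a sentence of justification. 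Parts (i), (ii), and (v) match the paper's argument.
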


\begin{proof}Item (\ref{enum:f2-1d}) follows from the observation that $\mr{GL}_1(\bF_2)$ is the trivial group.
	
	For (\ref{enum:f2-2d}), we use that $\mr{GL}_2(\bF_2)$ is isomorphic to the symmetric group $\mathfrak{S}_3$, by acting on the set $\bF^2_2 \setminus \{0\}$ of non-zero vectors, so in particular the inclusion $\fS_2 \to \mr{GL}_2(\bF_2)$ of permutation matrices is an isomorphism on $\bF_2$-homology. The inclusion of permutation matrices forms part of an $E_\infty$-map, and the $\bF_2$-homology of $\fS_2$ is well-known to be generated by $Q^d(\sigma)$ in degree $d$.
	
	 For (\ref{enum:f2-3d}), the cohomology ring of $\mr{GL}_3(\bF_2)$ with $\bZ$-coefficients was computed in \cite{tezukayagita}, and from it the $\bF_2$-homology is easily deduced using the Universal Coefficients Theorem. 
	 
	 To see that the generator of $H_2$ is $\sigma Q^2(\sigma)$, we use Stiefel--Whitney classes. The action of $\mr{GL}_3(\bF_2)$ on the set $\bF_2^3\setminus \{0\}$ gives a 7-dimensional real $\mr{GL}_3(\bF_2)$-representation $\rho_3$. Composing with the inclusion $\iota_2 \colon \fS_2 \hookrightarrow \mr{GL}_2(\bF_2) \hookrightarrow \mr{GL}_3(\bF_2)$ gives a real $\fS_2$-representation, which is the sum of 2 copies of the sign representation and 5 copies of the trivial representation. Thus its total Stiefel--Whitney class is $w(\rho_3 \circ \iota_2) = (1+w_1(\mr{sign}))^2 = 1+w_1(\mr{sign})^2$. In the usual identification $H^*(\fS_2;\bF_2) = \bF_2[x]$ the Stiefel--Whitney class $w_1(\mr{sign})$ is $x$. As $Q^2(\sigma) \in H_2(\fS_2;\bF_2)$ is non-zero, it must pair to 1 with $x^2 = w_1(\mr{sign})^2$, from which it follows that $\sigma Q^2(\sigma) \in H_2(\mr{GL}_3(\bF_2);\bF_2)$ pairs to 1 with $w_2(\rho_3)$, and is hence non-zero.
 
	  For (\ref{enum:f2-4d}), we use the exceptional isomorphism $A_8 \cong \mr{GL}_4(\bF_2)$ and the computation of the cohomology ring of $A_8$ with $\bF_2$-coefficients by Adem--Milgram \cite[Theorem VI.6.5]{adem-milgram}. To describe the generator in degree 2, we again use Stiefel--Whitney classes. Acting on the set of non-zero vectors $\bF_2^4 \setminus \{0\}$ gives a 15-dimensional real $\mr{GL}_4(\bF_2)$-representation $\rho_4$. By composing with the block permutation matrix inclusion $\iota_{2 \times 2} \colon \fS_2 \times \fS_2 \to \mr{GL}_4(\bF_2)$ we obtain a 15-dimensional real $\fS_2 \times \fS_2$-representation $\rho_4 \circ \iota_{2 \times 2}$. This splits as one copy of $\mr{sign} \boxtimes \mr{sign}$, $3$ copies of $\mr{triv} \boxtimes \mr{sign}$, $3$ copies of $\mr{sign} \boxtimes \mr{triv}$, and 8 copies of $\mr{triv} \boxtimes \mr{triv}$, where $\boxtimes$ denotes the external tensor product of representations. Thus, under the identification $H^*(\fS_2 \times \fS_2;\bF_2) = \bF_2[x_1, x_2]$ given by the K{\"u}nneth theorem,  its total Stiefel--Whitney class is
\begin{align*}w(\rho_4 \circ \iota_{2 \times 2}) &= (1+x_1 + x_2) \cdot (1+x_1)^3 \cdot (1+x_2)^3 \\
&= 1 + x_1 x_2 + \text{ classes of degree $\geq 3$.}
\end{align*}	
	  The cohomology class $x_1 x_2$ pairs to 1 with the homology class $Q^1(\sigma) \times Q^1(\sigma) \in H_2(\fS_2 \times \fS_2;\bF_2)$, which under $\iota_{2 \times 2}$ maps to $Q^1(\sigma)^2$.
  
 	For item \eqref{enum:f2-stab}, it is a consequence of \cref{cor.stabilisation-vanishing} that stabilising twice vanishes in degrees $< 3$, so in particular $\sigma^2 Q^2(\sigma) = 0$. An alternative proof may extracted from \cite{vanderkallenschur}.
\end{proof}

\subsection{The proof of \cref{thm:main-f2}} 

The canonical generator $\sigma$ of $H_0(\mr{GL}_1(\bF_2);\bF_2)$ may be represented by a map $\sigma \colon S^{1,0} \to \gR_{\bF_2}$.  We further pick a $0$- and a $1$-simplex in $\bF_2[\cC_\infty(2)] \in \cat{sMod}_{\bF_2}$ representing the product and the operation $Q^1(-)$. These may be used to first produce a representative $S^{1,1} \to \gR_{\bF_2}$ for $Q^1(\sigma)$, and then one for $\sigma Q^1(\sigma)$. As we proved that $\sigma Q^1(\sigma) = 0$, we may pick a null-homotopy $D^{3,2} \to \gR_{\bF_2}$ for the latter. Out of this data, we may build a map in $\cat{Alg}_{E_\infty}(\cat{sMod}_{\bF_2}^\bN)$
\begin{align*}
      f \colon \gA   &\longrightarrow \gR_{\bF_2}\\
      \text{with } \gA & \coloneqq \gE_\infty(S^{1,0}_{\bF_2} \sigma) \cup^{E_\infty}_{\sigma Q^1(\sigma)} D^{3,2}_{\bF_2} \tau.
\end{align*}
By \cref{lem:f2-low-rank}, this satisfies $H_{n,d}(f) = 0$ for $n \leq 3$ and $d \leq 2$. Using the Hurewicz theorem in $E_\infty$-homology (Corollary $E_k$.11.14) we see that $\smash{H_{n,d}^{E_\infty}}(f) = 0$ for $n \leq 3$ and $d \leq 2$. On the other hand, we know that $\smash{H_{n,d}^{E_\infty}}(\gA)$ is concentrated in bidegrees $(n,d) = (1,0),(3,2)$, while $H_{n,d}^{E_\infty}(\gR_{\bF_2}) = 0$ for $d<n-1$. The conclusion is that $\smash{H_{n,d}^{E_\infty}(f) = 0}$ for $d < n$ if $n \leq 3$, and for $d < n-1$ if $n > 3$.

As before, from $\gR_{\bF_2}$ we may construct a unital strictly associative algebra $\smash{\overline{\gR}_{\bF_2}} \simeq \smash{\gR_{\bF_2}^+}$. From $\sigma$ we get a corresponding map $\sigma \cdot - \colon \smash{\overline{\gR}_{\bF_2}} \to \smash{\overline{\gR}_{\bF_2}}$ whose homotopy cofibre $\overline{\gR}_{\bF_2}/\sigma$ satisfies
	\[H_{n,d}(\overline{\gR}_{\bF_2}/\sigma) \cong H_{d}(\mr{GL}_{n}(\bF_2),\mr{GL}_{n-1}(\bF_2);\bF_2),\]
and to prove \cref{thm:main-f2} it thus suffices to prove a vanishing range for $H_{n,d}(\overline{\gR}_{\bF_2}/\sigma)$.

\begin{proposition}$H_{n,d}(\overline{\gR}_{\bF_2}/\sigma) = 0$ for $d<\frac{2(n-1)}{3}$.\end{proposition}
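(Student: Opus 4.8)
The plan is to compare $\overline{\gR}_{\bF_2}/\sigma$ with $\gA/\sigma$ via the map $f$ using the same cell-attachment spectral sequence strategy as in the proof of the corresponding theorem for $q \neq 2$. First I would invoke Theorem $E_k$.11.5.3 to extend $f \colon \gA \to \gR_{\bF_2}$ to a factorization $\gA \to \gC \overset{\sim}\to \gR_{\bF_2}$ where $\gC$ is obtained from $\gA$ by attaching $E_\infty$-cells in bidegrees $(n_i,d_i)$ with $d_i \geq \rho(n_i)$, where $\rho(n) = n$ for $n \leq 3$ and $\rho(n) = n-1$ for $n > 3$ — this is exactly the vanishing range for $H^{E_\infty}_{n,d}(f)$ established just above. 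One checks that $\rho$ is lax monoidal (i.e.\ $\rho \ast \rho \geq \rho$), which is the condition needed to run the bar spectral sequence comparison; here the only subtlety is the jump in slope at $n=3$, but since $\rho(a) + \rho(b) \geq (a-1) + (b-1) \geq (a+b) - 1 \geq \rho(a+b)$ whenever $a+b > 3$, and the small cases are checked by hand, this holds.

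Next, from Corollary $E_k$.10.3.5 I would extract the spectral sequence
\[
E^1_{n,p,q} = \tilde{H}_{n,p+q,q}\Bigl(\overline{0_* \gR_{\bF_2} \vee^{E_\infty} \bigvee^{E_\infty}_{i \in I} S^{n_i,d_i,d_i}_{\bF_2} x_i}\Bigr) \Longrightarrow H_{n,p+q}(\overline{\gA})
\]
whose $E^1$-page is the tensor product of $H_{*,*}(\overline{\gR}_{\bF_2})$ (in filtration $0$) with the $W_\infty$-algebras $W_\infty(\bF_2\{x_i\})$, each a free graded-commutative algebra on the Dyer--Lashof words $Q^I x_i$. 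The key bidegree bookkeeping: for $p=2$ the operation $Q^s$ sends $(n,d) \mapsto (2n, d+s)$ for $s > d$, so starting from $d_i \geq n_i - 1$ (valid once $n_i > 3$, and the cells we attach have $n_i \geq 4$ except possibly for the $\gA$-generators themselves), squaring gives $|x| + d = 2d \geq 2(n-1) \geq 2n - 1 \geq (2n) - 1$, so the inequality $d \geq n - 1$ is preserved, hence all $\bF_2$-algebra generators of the $W_\infty$-part lie above the line $d = n-1$. Then I would tensor the whole setup with the cofibre of $\sigma \cdot -$, obtaining a module spectral sequence $F^1$ converging to $H_{n,p+q}(\overline{\gA}/\sigma)$, and observe that $\gA/\sigma$ has $H^{E_\infty}$ concentrated in the two bidegrees $(1,0)$ and $(3,2)$ coming from the two cells of $\gA$, so $H_{n,d}(\overline{\gA}/\sigma)$ is small and explicit in the relevant range.

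The final step is the induction-by-contradiction on the lowest degree $d$ in which $H_{n,d}(\overline{\gR}_{\bF_2}/\sigma) \neq 0$ with $d < \tfrac{2(n-1)}{3}$. The $F^1$-page is spanned by products of an element of $H_{n',d',0}(\overline{\gR}_{\bF_2}/\sigma)$ with an element of the $W_\infty$-tensor factor in tridegree $(n'',d'',q)$ with $d'' \geq q \geq 1$; since $F^r$ converges to (almost) zero and the differential shifts tridegree by $(0, r-1, -r)$, a nonzero $F^1_{n,d,0}$ forces some nonzero $F^1_{n, d+1-q, q}$ with $q \geq 1$. One then case-splits on whether $(n',d') = (0,0)$ (where the contradiction comes from the $W_\infty$-generators satisfying $d'' \geq n'' - 1$) or $(n',d') \neq (0,0)$ (where minimality of $d$ gives $d' \geq \tfrac{2(n'-1)}{3}$, and combining with $d'' \geq n'' - 1$ and the arithmetic of the slope-$2/3$ line yields $d' + d'' \geq \tfrac{2(n'+n''-1)}{3}$, contradicting $d'+d'' = d < \tfrac{2(n-1)}{3}$ with $n = n' + n''$). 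I expect the main obstacle to be getting the arithmetic of the $2/3$-slope line exactly right across the two regimes of $\rho$ — in particular verifying that the Dyer--Lashof operations do not merely preserve $d \geq n-1$ but that, combined with the contributions of degree-$q$ filtration, the products land strictly above the target line $d < \tfrac{2(n-1)}{3}$ — together with carefully tracking the low-rank exceptional classes (the $(3,2)$-cell $\tau$ of $\gA$ and its Dyer--Lashof descendants) to be sure none of them sneak below the claimed range.
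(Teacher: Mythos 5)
Your setup has the spectral sequence running the wrong way. In the $q=2$ case the map $f\colon \gA \to \gR_{\bF_2}$ goes \emph{from} the small algebra $\gA$ \emph{to} $\gR_{\bF_2}$, so the CW approximation attaches cells to $\gA$ to reach $\gR_{\bF_2}$, and the resulting skeletal spectral sequence has $\overline{\gA}/\sigma$ on the $E^1$-page and converges to $H_{n,p+q}(\overline{\gR}_{\bF_2}/\sigma)$ — not, as you wrote, $\gR_{\bF_2}$ on the $E^1$-page converging to $\overline{\gA}$. This matters because your final step imports the contradiction-by-minimality argument from the $q\neq 2$ proof, which relies on the target of the spectral sequence ($\overline{\gN}_{\bF_p}/\sigma$) being almost zero. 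Here the target is $\overline{\gR}_{\bF_2}/\sigma$, which is precisely what we are trying to understand, so there is no ``converges to zero'' lever to pull. The correct strategy is the opposite: one shows directly that the $E^1$-page vanishes for $d < \frac{2(n-1)}{3}$, which reduces the problem to showing $H_{n,d}(\overline{\gA}/\sigma)=0$ in that range.

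That reduction is the real content, and your proposal dismisses it as ``small and explicit.'' It is not: $\gA$ has a cell in bidegree $(3,2)$, so $\overline{\gA}/\sigma$ has classes below the slope-$1$ line (e.g.\ $\tau$, $\tau^j Q^1(\sigma)^i$, and their Dyer--Lashof descendants), and the arithmetic in your case-split uses $d'' \geq n'' - 1$ throughout, which these classes violate — so the inequality chain breaks down. Establishing the vanishing $H_{n,d}(\overline{\gA}/\sigma)=0$ for $d < \frac{2(n-1)}{3}$ requires analyzing the cell-attachment filtration spectral sequence on $\overline{\gA}$: computing $d^1(\tau) = \sigma Q^1(\sigma)$, the inductive formula for $d^{2^i}(\tau^{2^i})$ via the Cartan formula and the interaction of Dyer--Lashof operations with the differentials, and then running the module spectral sequence on $\overline{\gA}/\sigma$ to conclude that its $E^3$-page is a free module over the subalgebra $W^{\neq\varnothing}$ on the classes $Q^1(\sigma)^i \tau^j$ with $i\in\{0,1,2\}$ and $j\equiv 0,1 \pmod 4$, all of which sit on or above the slope-$2/3$ line. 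A minor additional issue: ``$\gA/\sigma$ has $H^{E_\infty}$ concentrated in $(1,0)$ and $(3,2)$'' conflates the $E_\infty$-cell structure of $\gA$ with the ordinary homology of the module $\overline{\gA}/\sigma$; the former is by construction but does not by itself give the latter.
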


\begin{proof} We apply the CW approximation theorem (Theorem $E_k$.11.21) to extend the map $f$ to a commutative diagram
\[\begin{tikzcd}\gA \rar{f} \dar[swap]{g} & \gR_{\bF_2} \\
\gC \arrow{ru}[swap]{\simeq} & \end{tikzcd}\]
with $g \colon \gA \to \gC$ a relative CW approximation with only cells of bidegrees $(n,d)$ satisfying $d \geq n$ if $n \leq 3$ and $d \geq n-1$ if $n>3$.
The relative CW object $\gC$ is equipped a skeletal filtration $\mr{sk}(\gC)$ which is constantly $\gA$ and using Theorem $E_k$.10.10 we obtain from the filtered object $\overline{\mr{sk}(\gC)}/\sigma$ a corresponding skeletal spectral sequence
\begin{equation}\label{eqn:ga-skel-ss} 
	E^1_{n,p,q} = H_{n,p+q,p}\left (\left(\overline{0_* \gA \vee^{E_\infty} \bigvee^{E_\infty}_{i \in I} S^{n_i,d_i,d_i}_{\bF_2} x_i} \right)/\sigma \right) \Longrightarrow H_{n,p+q}(\overline{\gR}_{\bF_2}/\sigma),
\end{equation}
with $n_i$ and $d_i$ satisfying the bounds given above. Just as in the proof of \cref{thm:VanishingNot2} we identify the $E^1$-page as
	\[E^1_{*,*,*} = H_{*,0,*}(\overline{\gA}/\sigma) \otimes \bigotimes_{i \in I} W_\infty(\bF_2 \{ x_i\}[n_i, d_i, d_i]).\]
Here $H_{*,0,*}(\overline{\gA}/\sigma)$ denotes the bigraded object $H_{*,*}(\overline{\gA}/\sigma)$ made trigraded by concentrating it in middle grading $0$. Since $d_i \geq n_i$ if $n_i \leq 3$ and $d_i \geq n_i-1$ if $n_i>3$, for all $i \in I$, just as the proof of \cref{thm:VanishingNot2} all non-zero elements of $\bigotimes_{i \in I} W_\infty(\bF_2 \{ x_i\}[n_i, d_i, d_i])$ satisfy these inequalities too. Thus it remains to show that $H_{n,d}(\overline{\gA}/\sigma)=0$ for $d<\frac{2(n-1)}{3}$.

Computing the homology of $\overline{\gA}/\sigma$ outright seems to be complicated, so we shall be content with computing enough of it to establish the required vanishing. To do so, we first investigate the spectral sequence for the cell-attachment filtration on $\overline{\gA}$, as described in Corollary $E_k$.10.17. This spectral sequence interacts well with Dyer--Lashof operations as described in Section $E_k$.16.6, converges to $H_{d,p+q}(\overline{\gA})$, and has $E^1$-page given by
\begin{equation}\label{eqn:ga-alg-ss} 
	E^1_{n,p,q}(\overline{\gA}) = H_{n,p+q,p}\left(\overline{\gE_\infty\left(S^{1,0,0}_{\bF_2} \sigma \vee D^{3,2,2}_{\bF_2} \tau \right)}\right) \cong W_\infty(\bF_2  \{\sigma,\tau\}).
\end{equation}
This is the free graded commutative algebra on the tri-graded $\bF_2$-vector space $L$ with basis given by iterated $Q^i$'s applied to $\sigma$ and $\tau$. See \cref{fig:lgens} for a table of these generators in a range; we point out that the only generators of slope $< 3/4$ are given by $\sigma$, $Q^1(\sigma)$, and $\tau$.
\begin{figure}[h]
	\begin{tikzpicture}
	\begin{scope}
	\draw (-1,0)--(8,0);
	\draw (0,-1) -- (0,5.5);
	
	\foreach \s in {0,...,5}
	{
		\draw [dotted] (-.5,\s)--(8,\s);
		\node [fill=white] at (-.25,\s) [left] {\tiny $\s$};
	}
	
	\foreach \s in {1,...,6}
	{
		\draw [dotted] ({1.25*\s},-0.5)--({1.25*\s},5.5);
		\node [fill=white] at ({1.25*\s},-.5) {\tiny $\s$};
	}
	\node [fill=white] at (0,-.5) {\tiny 0};
	\node [fill=white] at ({1.25*1},0) {$\sigma$};
	\node [fill=white] at ({1.25*2},1) {$Q^1(\sigma)$};
	\node [fill=white] at ({1.25*2},2) {$Q^2(\sigma)$};
	\node [fill=white] at ({1.25*2},3) {$Q^3(\sigma)$};
	\node [fill=white] at ({1.25*2},4) {$Q^4(\sigma)$};
	\node [fill=white] at ({1.25*2},5) {$Q^5(\sigma)$};
	\node [fill=white] at ({1.25*4},3) {$Q^{2,1}(\sigma)$};
	\node [fill=white] at ({1.25*4},4) {$Q^{3,1}(\sigma)$};
	\node [fill=white] at ({1.25*4},5) {$Q^{4,1}(\sigma)$, $Q^{3,2}(\sigma)$};
	\node [fill=white] at ({1.25*3},2) {$\tau$};
	\node [fill=white] at ({1.25*6},5) {$Q^3(\tau)$};
	
		\draw [very thick,Mahogany,densely dotted] (0,0) -- ({1.7*4*1.25},{1.7*3}) node [right] {\tiny $d = \frac{3n}{4}$};
	
	\node [fill=white] at (-.5,-.5) {$\nicefrac{d}{n}$};
	\end{scope}
	\end{tikzpicture}
	\caption{The additive generators of $L$ in the range $n \leq 6$, $d \leq 5$, with filtration degree suppressed.}
	\label{fig:lgens}
\end{figure}

\vspace{.5em}

\noindent\textbf{Claim.} The differentials of \eqref{eqn:ga-alg-ss} have the following properties:
	\begin{enumerate}[\indent (i)]
		\item \label{enum:claim1} All differentials vanish on $Q^I(\sigma)$ for any admissible $I$.
		\item \label{enum:claim2} $d^1(\tau) = \sigma Q^1(\sigma)$. For $i \geq 1$, $d^{j}(\tau^{2^i}) = 0$ for $j<2^i$ and 
		\[d^{2^i}(\tau^{2^i}) = (Q^1(\sigma))^{2^i} Q^{2^{i-1},2^{i-2},\ldots,1}(\sigma) + \sigma^{2^i}Q^{2^i,2^{i-1},\ldots,1}(\sigma).\]
		\item \label{enum:claim3} $d^1(Q^I(\tau)) = d^2(Q^I(\tau)) = 0$ for admissible $I \neq \varnothing$ and $e(I)>2$.
	\end{enumerate}

\begin{proof}[Proof of claim] Part (i) follows from the map $\overline{\gE_\infty(S^{1,0,0}_{\bF_2}\sigma)} \to \overline{\gA}$ and naturality. 

We prove part (ii) by induction. As the cell $\tau$, of filtration 1, is attached along $\sigma Q^1(\sigma)$, of filtration 0, we have $d^1(\tau) = \sigma Q^1(\sigma)$. By the derivation property we have $d^1(\tau^2) = 2 d^1(\tau) = 0$, and we may then compute $d^2(\tau^2)$ using the fact that $\tau^2 = Q^2(\tau)$ so by Theorem $E_k$.16.8 the class $\tau^2$ survives to the $E^{2}$-page and the differential here satisfies
	\begin{align*}d^2(\tau^2) &= d^2(Q^2(\tau)) = Q^2(d^1(\tau)) 	= Q^2(\sigma Q^1(\sigma)) \\
	&= Q^1(\sigma)^3+\sigma^2 Q^{2,1}(\sigma)\end{align*}
	using the Cartan formula.
		
Suppose now that the result holds for $\tau^{2^i}$. Then the class $\tau^{2^i}$ survives until the $E^{2^i}$-page and on this page $\tau^{2^{i+1}}$ is the square of $\tau^{2^i}$, so is $Q^{2^{i+1}}(\tau^{2^i})$ (recall that $\tau$ has degree 2). It then follows from Theorem $E_k$.16.7 that $\tau^{2^{i+1}}$ survives until the $E^{2^{i+1}}$-page, and the differential here satisfies
	\begin{align*}d^{2^{i+1}}(\tau^{2^{i+1}}) &= d^{2^{i+1}}(Q^{2^{i+1}}(\tau^{2^i})) \\
	&= Q^{2^{i+1}}(d^{2^i}(\tau^{2^i})) \\
	&= Q^{2^{i+1}}((Q^1(\sigma))^{2^i} Q^{2^{i-1},2^{i-2},\ldots,1}(\sigma) + \sigma^{2^i}Q^{2^i,2^{i-1},\ldots,1}(\sigma)).\end{align*}
Let us consider the terms separately: $Q^{2^{i+1}}((Q^1(\sigma))^{2^i} Q^{2^{i-1},2^{i-2},\ldots,1}(\sigma))$ may be computed using the Cartan formula. Since $Q^s$ vanishes on elements of degree $d<s$, the only options are (a) splitting $Q^{2^{i+1}}$ into $2^i$ terms $Q^1$ and a single $Q^{2^i}$, resulting in $(Q^1(\sigma))^{2^{i+1}} Q^{2^i,2^{i-1},2^{i-2},\ldots,1}(\sigma)$ because $Q^s$ acts as squaring on elements of degree $s$, or (b) splitting $Q^{2^{i+1}}$ into $2^i-1$ terms $Q^1$, a single term $Q^2$ and a single $Q^{2^i-1}$, resulting in $2^i$ contributions $(Q^1(\sigma))^{2^{i+1}-2} Q^{2,1}(\sigma) (Q^{2^{i-1},2^{i-2},\ldots,1}(\sigma))^2$.

Similarly, for $Q^{2^{i+1}}(\sigma^{2^i}Q^{2^i,2^{i-1},\ldots,1}(\sigma))$ the only non-zero contribution to the Cartan formula splits $Q^{2^{i+1}}$ into $2^i$ terms $Q^0$, and a single $Q^{2^{i+1}}$, resulting in $\sigma^{2^{i+1}}Q^{2^{i+1},2^i,\ldots,1}(\sigma)$.
	
Part (iii) follows from Theorem $E_k$.16.8.
\end{proof}	

\vspace{.5em}

We will use this to analyse the behaviour of the cell-attachment filtration spectral sequence on $\overline{\gA}/\sigma$, using the map $q \colon \overline{\gA} \to \overline{\gA}/\sigma$. This spectral sequence converges to $H_{d, p+q}(\overline{\gA}/\sigma)$ and has $E^1$-page
\begin{equation} \label{eqn:ga-mod-ss} 
	E^1_{n,p,q}(\overline{\gA}/\sigma) = H_{n,p+q,p}\left(\overline{\gE_\infty\left(S^{1,0,0}_{\bF_2} \sigma \vee D^{3,2,2}_{\bF_2} \tau \right)}/\sigma \right).\end{equation}

In terms of our description of $E^1_{*,*,*}(\overline{\gA})$ this is given by dividing out by the ideal $(\sigma)$. That is, it is the free graded commutative algebra with generators given by iterated $Q^i$'s applied to $\sigma$ and $\tau$, \emph{except} for the generator $\sigma$ itself. Let us write these generators as $q_*(Q^I(\sigma)) \coloneqq \{Q^I(\sigma)\}$ and so on. We shall compute differentials in this spectral sequence using the fact that they commute with the map $q_*$ of spectral sequences, and more generally using the module structure of \eqref{eqn:ga-mod-ss} over \eqref{eqn:ga-alg-ss},  
\[- \cdot - \colon E^2_{n,p,q}(\overline{\gA}) \otimes E^2_{n',p', q'}(\overline{\gA}/\sigma) \lra E^2_{n+n',p+p', q+q'}(\overline{\gA}/\sigma),\]
given by $\overline{\gA}/\sigma$ having the structure of a filtered module over the filtered ring $\overline{\gA}$.

\vspace{.5em}

\noindent\textbf{Claim. } $E^3_{n,p,q}(\overline{\gA}/\sigma)=0$  for $p+q < \frac{2(n-1)}{3}$.

\begin{proof}[Proof of claim] The first differentials we compute using naturality are
\begin{align*}
d^j(\{Q^I(\sigma)\}) &= 0 \text{ for $j \geq 1$} && \text{using \eqref{enum:claim1}}\\
d^1(\{\tau^i\}) &= 0 \text{ for $i \geq 1$} && \text{using \eqref{enum:claim2}}\\
d^1(\{Q^I(\tau)\}) = d^2(\{Q^I(\tau)\}) &=0 \text{ for $I \neq \varnothing$ and $e(I)>2$} && \text{using \eqref{enum:claim3}}.\end{align*}

Because $\tau^{2^k}$ survives to $E^{2}(\overline{\gA})$, we can determine $d^2(\{\tau^{2k}\})$ using \eqref{enum:claim2}:
\begin{align*}d^2(\{\tau^{2k}\}) &= d^2(\tau^{2k} \cdot \{1\}) \\
&=d^2(\tau^{2k}) \cdot \{1\} + \tau^{2k} \cdot d^2(\{1\}) \\
&=(k\tau^{2k-2}(Q^1(\sigma)^3+\sigma^2Q^{2,1}(\sigma)) \cdot \{1\} \\
&=\{kQ^1(\sigma)^3\tau^{2k-2}\}.\end{align*}
In particular, $d^2(\{\tau^{4r}\}) = 0$ while $d^2(\{\tau^{4r+2}\}) = \{Q^1(\sigma)^3\tau^{4r}\}$. Using the module structure we deduce from this that for all $i$ and $r$
\begin{equation}\label{eqn:d2-even}\begin{aligned} d^2(\{Q^1(\sigma)^i \tau^{4r}\}) &= 0, \\
d^2(\{Q^1(\sigma)^i \tau^{4r+2}\}) &= \{Q^1(\sigma)^{i+3} \tau^{4r}\}.\end{aligned}\end{equation}

On the other hand, we cannot compute $d^2(\{\tau^{2k+1}\})$ by naturality, as $\tau^{2k+1}$ does not survive to $E^2(\overline{\gA})$. However, $\{\tau\}$ has internal degree 1, so $d^2(\{\tau\})$ has internal degree $-1$ and hence vanishes. To compute $d^2(\{\tau^{2k+1}\})$ for $k \geq 1$, we use the module structure:
\begin{align*}d^2(\{\tau^{2k+1}\}) &= d^2(\tau^{2k} \cdot \{\tau\}) \\
&=(k \tau^{2k-2} (Q^1(\sigma)^3+\sigma^2 Q^{2,1}(\sigma))) \cdot \{\tau\} + \tau^{2k}  \cdot d^2(\{\tau\}) \\
&= \{k Q^1(\sigma)^{3} \tau^{2k-1} \}.\end{align*}
In particular, $d^2(\{\tau^{4r+1}\}) = 0$, while $d^2(\{\tau^{4r+3}\}) = \{Q^1(\sigma)^3\tau^{4r+1}\}$. Using the module structure  we deduce from this that for all $i$ and $r$
\begin{equation}\label{eqn:d2-odd}\begin{aligned} d^2(\{Q^1(\sigma)^i \tau^{4r+1}\}) &= 0, \\
d^2(\{Q^1(\sigma)^i \tau^{4r+3}\}) &= \{Q^1(\sigma)^{i+3} \tau^{4r+1}\}.\end{aligned}\end{equation}

The conclusion of this discussion is as follows. Firstly $E^1_{*,*,*}(\overline{\gA}/\sigma)=E^2_{*,*,*}(\overline{\gA}/\sigma)$. Secondly, $E^2_{*,*,*}(\overline{\gA}/\sigma)$ is a free module over the subalgebra $W^{\neq \varnothing}$ of $W_{\infty}(\bF_2  \{\sigma,\tau\})$ generated by those $Q^I(\sigma)$ and $Q^I(\tau)$ with $I \neq \varnothing$, and the $d^2$-differential is $W^{\neq \varnothing}$-linear. As a $W^{\neq \varnothing}$-module, $E^2_{*,*,*}(\overline{\gA}/\sigma)$ is given by
\[W^{\neq \varnothing} \otimes \left(\bF_2[Q^1(\sigma),\tau],d\right),\]
with differential $d$ given by \eqref{eqn:d2-even} and \eqref{eqn:d2-odd}. Thus we have that $E^3_{*,*,*}(\overline{\gA}/\sigma)$ is a free $W^{\neq \varnothing}$-module on the generators $Q^1(\sigma)^i \tau^j$ with $i \in \{0,1,2\}$ and $j \equiv 0,1 \pmod 4$. These all are in bidegrees $(n,d)$ satisfying $d \geq \frac{2(n-1)}{3}$. As the additive generators of $W^{\neq \varnothing}$ have slope at least $\frac{3}{4}$, we conclude that $E^3_{n,p,q}(\overline{\gA}/\sigma)=0$ for $p+q < \frac{2(n-1)}{3}$.\end{proof}

Now that $E^3_{n,p,q}(\overline{\gA}/\sigma)$ vanishes in this range, the vanishing result for its $E^\infty$-page and hence for $H_{n,p+q}(\overline{\gA}/\sigma)$ follow. As explained earlier, this gives the desired vanishing range for $H_{n,p+q}(\overline{\gR}_{\bF_2}/\sigma)$.\end{proof}

\begin{remark}One can not do better in the above proof. The class $\tau^4$ survives to $E^4_{12,4,4}(\overline{\gA})$ and satisfies $d^4(\tau^4) = Q^1(\sigma)^4 Q^{2,1}(\sigma) + \sigma^4 Q^{4,2,1}(\sigma)$, so we have $d^4(\{\tau^4\}) = \{Q^1(\sigma)^4 Q^{2,1}(\sigma)\}$. But on the other hand we have
\[d^2(\{Q^{2,1}(\sigma) Q^1(\sigma) \tau^2\}) = \{Q^1(\sigma)^4 Q^{2,1}(\sigma)\}\]
by \eqref{eqn:d2-even} and the module structure, so $d^4(\{\tau^4\}) = 0 \in E^4_{12, 7, 0}(\overline{\gA}/\sigma)$. Now all higher differentials on $\{\tau^4\}$ would land in negative internal degree, so $\{\tau^4\}$ is a permanent cycle. There is no reason for it to be a boundary.
\end{remark}

\subsection{Sharpness}\label{sec:sharpnessf2} We saw in \cref{sec:f2-low-rank} the map
\[H_d(\fS_{n},\fS_{n-1};\bF_2) \lra H_d(\mr{GL}_n(\bF_2),\mr{GL}_{n-1}(\bF_2);\bF_2)\]
is an isomorphism for $(n,d) = (2,1),(4,2)$; both $Q^1(\sigma)$ and $Q^1(\sigma)^2$ do not destabilise. The class in bidegree $(n,d) = (4,2)$ exactly fails to satisfy the bound $d < \frac{2(n-1)}{3}$.

\begin{remark}This rules out some potential improvements of homological stability ranges for other automorphism groups. Suppose we have a ring $R$ with homomorphism to $\bF_2$ (e.g.\ $\bZ$), then upon picking a point $\sigma \in B\rm{GL}_1(R)$ we get a map $\gE_\infty^+(S^{1,0}_{\bF_2}\sigma) \to \bigsqcup_{n \geq 0} B\mr{GL}_n(R)$ of $E_\infty$-algebras. As a result of the above observation, the composition of the maps
\[\gE_\infty^+(S^{1,0}_{\bF_2} \sigma) \lra \bigsqcup_{n \geq 0} B\mr{GL}_n(R) \lra \bigsqcup_{n \geq 0} B\mr{GL}_n(\bF_2)\]
will induce an isomorphism on relative homology with $\bF_2$-coefficients of the stabilisation map in bidegree $(n,d) = (2,1),(4,2)$. Thus $H_d(\mr{GL}_n(R),\mr{GL}_{n-1}(R);\bF_2) \neq 0$ for $(n,d) = (2,1),(4,2)$. 

Similarly, the sequence of maps
\[\gE_\infty^+(S^{1,0}_{\bF_2} \sigma) \lra \bigsqcup_{n \geq 0} B\mr{Aut}(F_n) \lra \bigsqcup_{n \geq 0} B\mr{GL}_n(\bF_2),\]
the right map given by the action of a based homotopy automorphism of $\bigvee_n S^1$ on $H_1(\bigvee_n S^1;\bF_2)$, implies $H_d(\mr{Aut}(F_n),\mr{Aut}(F_{n-1});\bF_2) \neq 0$ for $(n,d) = (2,1),(4,2)$. \end{remark}

More recently Szymik \cite{Szymik} has shown that $H_3(\mr{GL}_6(\bF_2);\bF_2) \neq 0$, based on computer calculations and a theorem of Webb. Combining this with Theorem G of \cite{hepworth} it follows that this group must be one-dimensional and generated by $Q^1(\sigma)^3$. In the following lemma we give our own proof of the latter result.

\begin{lemma}
$H_{6,3}(\gR_{\bF_2})$ is at most 1-dimensional, spanned by $Q^1(\sigma)^3$.
\end{lemma}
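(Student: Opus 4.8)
The plan is to run the same cell-attachment filtration machinery one step further than in the proof of Theorem~\ref{thm:main-f2}, but now keeping track of rank $n=6$ and degree $d=3$ exactly rather than just the vanishing line. First I would recall from the previous proof that $\gR_{\bF_2}$ receives a map $f\colon\gA\to\gR_{\bF_2}$ from $\gA = \gE_\infty(S^{1,0}_{\bF_2}\sigma)\cup^{E_\infty}_{\sigma Q^1(\sigma)}D^{3,2}_{\bF_2}\tau$ which is a $\smash{H^{E_\infty}}$-isomorphism for $d<n$ when $n\le 3$ and for $d<n-1$ when $n>3$; extend it to a relative CW approximation $g\colon\gA\to\gC\xrightarrow{\sim}\gR_{\bF_2}$ with cells only in bidegrees $(n,d)$ with $d\ge n$ for $n\le 3$ and $d\ge n-1$ for $n>3$. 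The skeletal spectral sequence then shows $H_{6,3}(\gR_{\bF_2})$ is a subquotient of $H_{6,3,*}$ of a tensor product $H_{*,*,0}(\overline{\gA})\otimes\bigotimes_i W_\infty(\bF_2\{x_i\})$ (or rather, its non-divided-by-$\sigma$ analogue), with all the $x_i$ in bidegrees on the correct side of the vanishing line. In bidegree $(6,3)$, the only contributions can come from $H_{*,*,0}(\overline{\gA})$ itself together with products with $W_\infty$-classes in low total degree; since the cells $x_i$ have $d_i\ge n_i-1$ with $d_i\ge 3$ forced, the only one that can appear in a nontrivial product landing in $(6,3)$ is $\sigma$, so I expect it suffices to analyse $E^1_{6,p,q}(\overline{\gA})$ with $p+q=3$ together with the $\sigma$-multiplication coming from the strictly associative algebra structure.

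Next I would compute $H_{6,3}(\overline{\gA})$ (or enough of it) using the cell-attachment filtration spectral sequence \eqref{eqn:ga-alg-ss}, whose $E^1$-page is $W_\infty(\bF_2\{\sigma,\tau\})$, the free graded-commutative algebra on iterated Dyer--Lashof operations applied to $\sigma$ (degree $0$, rank $1$) and $\tau$ (degree $2$, rank $3$). I would list the monomials in bidegree $(n,d)=(6,3)$: these are products of basis elements of $L$ (from Figure~\ref{fig:lgens}) with total rank $6$ and total degree $3$. The relevant generators of low slope are $\sigma$, $Q^1(\sigma)$ (rank $2$, degree $1$), and $\tau$ (rank $3$, degree $2$); in rank $6$ degree $3$ one gets monomials like $Q^1(\sigma)^3$, $\sigma\tau Q^1(\sigma)$ (rank $1+3+2=6$, degree $0+2+1=3$), $\sigma^2 Q^3(\sigma)$ (rank $1+1+2=4$, wrong), and I would enumerate these carefully. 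Then I apply the Claim from the proof of Theorem~\ref{thm:main-f2}: the differentials vanish on all $Q^I(\sigma)$, and $d^1(\tau)=\sigma Q^1(\sigma)$, so by the Leibniz rule $d^1(\sigma\tau Q^1(\sigma)) = \sigma^2 Q^1(\sigma)^2 \ne 0$ in $E^1_{6,*,*}(\overline{\gA})$, killing that monomial; any further monomials involving $\tau$ to an odd power will similarly be hit or will support differentials. This should leave $Q^1(\sigma)^3$ as essentially the only survivor in bidegree $(6,3)$, giving $\dim H_{6,3}(\overline{\gA})\le$ a small explicit number, with $Q^1(\sigma)^3$ the candidate generator; passing through the skeletal spectral sequence and dividing by the contribution of $\sigma$-multiplication from lower rank then yields $\dim H_{6,3}(\gR_{\bF_2})\le 1$, spanned by (the image of) $Q^1(\sigma)^3$.

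The main obstacle I anticipate is bookkeeping: one must be sure that (a) no $W_\infty$-class on a CW cell $x_i$ of $\gC$ can combine with a class of $\overline{\gA}$ to land in bidegree $(6,3)$ other than via $\sigma$-multiplication already accounted for — this needs the bound $d_i\ge n_i-1$, $d_i\ge 3$ and a short case check on $(n_i,d_i)$ with $n_i\le 6$; and (b) that in the $E^1$-page $W_\infty(\bF_2\{\sigma,\tau\})$ there are no monomials of bidegree $(6,3)$ other than the ones I expect (this is a finite but somewhat fiddly enumeration of partitions of rank $6$ into parts from $\{1,2,3,\dots\}$ with compatible degree), and that the $d^1$- and $d^2$-differentials really do kill all of them except $Q^1(\sigma)^3$. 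A secondary subtlety is that the spectral sequence computes $H_{6,3}(\overline{\gA})$ only up to the associated graded, so one must check there is no room for extension problems to produce extra dimensions — but since we only want an upper bound, the associated graded suffices and extensions can only collapse dimension, so this is harmless. Concretely I would organize the argument as: (1) reduce via the CW approximation to $\overline{\gA}/\sigma$ (or to $\overline{\gA}$ together with $\sigma$-multiplication) in bidegree $(6,3)$; (2) enumerate $E^1_{6,3}(\overline{\gA})=W_\infty(\bF_2\{\sigma,\tau\})_{6,3}$; (3) compute $d^1,d^2$ on these monomials using the Claim and the Cartan formula; (4) conclude $H_{6,3}(\gR_{\bF_2})$ has dimension $\le 1$, spanned by $Q^1(\sigma)^3$.
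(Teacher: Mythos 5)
The paper takes a genuinely different and much more economical route. It does not try to compute $H_{6,3}$ directly from a spectral sequence. Instead it invokes the long exact sequence of the cofibre $\overline{\gR}_{\bF_2}\to\overline{\gR}_{\bF_2}\to\overline{\gR}_{\bF_2}/\sigma$, together with the already-established vanishing $H_{7,3}(\overline{\gR}_{\bF_2})=0$ from Corollary~\ref{cor:vanishing}, to conclude that the connecting map $\delta\colon H_{7,4}(\overline{\gR}_{\bF_2}/\sigma)\to H_{6,3}(\overline{\gR}_{\bF_2})$ is onto. It then observes that $H_{7,4}(\overline{\gA}/\sigma)\to H_{7,4}(\overline{\gR}_{\bF_2}/\sigma)$ is surjective (by the skeletal spectral sequence \eqref{eqn:ga-skel-ss}) and that $H_{7,4}(\overline{\gA}/\sigma)=\bF_2\{\tau Q^1(\sigma)^2\}$ (already computed in the proof of Theorem~\ref{thm:main-f2}), with $\delta(\tau Q^1(\sigma)^2)=Q^1(\sigma)^3$. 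The only analysis of bidegree $(7,4)$ is therefore needed, not $(6,3)$, and no fresh monomial enumeration is required.

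Your direct approach has a concrete gap. A careful enumeration of $W_\infty(\bF_2\{\sigma,\tau\})$ in bidegree $(6,3)$ gives \emph{five} monomials, not the two you list: $Q^1(\sigma)^3$, $\sigma Q^1(\sigma)\tau$, $\sigma^2 Q^1(\sigma)Q^2(\sigma)$, $\sigma^2 Q^{2,1}(\sigma)$, and $\sigma^4 Q^3(\sigma)$. Running the differentials of \eqref{eqn:ga-alg-ss}: $d^1(\sigma Q^2(\sigma)\tau)=\sigma^2 Q^1(\sigma)Q^2(\sigma)$ kills the third, $d^2(\tau^2)=Q^1(\sigma)^3+\sigma^2 Q^{2,1}(\sigma)$ identifies the first and fourth, and $\sigma Q^1(\sigma)\tau$ supports a $d^1$ to degree $2$. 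But $\sigma^4 Q^3(\sigma)$ is a permanent cycle in this spectral sequence, and no differential inside $\overline{\gA}$ can touch it. So $H_{6,3}(\overline{\gA})$ is \emph{two}-dimensional, spanned by $Q^1(\sigma)^3$ and $\sigma^4 Q^3(\sigma)$, not one-dimensional as your outline suggests. To finish your way you would then have to explain how $\sigma^4 Q^3(\sigma)$ dies in the passage to $\overline{\gR}_{\bF_2}$ via the skeletal spectral sequence. That requires identifying a differential supported on some cell $x_i$ of the CW approximation $\gC$, whose precise bidegrees are not determined by the vanishing line alone; one cannot rule out cells such as $(2,2)$, $(2,3)$, $(3,3)$, $(4,3)$, each of which can multiply by a power of $\sigma$ into bidegree $(6,3)$. (Its death in $\gR_{\bF_2}$ itself is known from Corollary~\ref{cor.stabilization-vanishing}, but translating that into a specific differential in your filtration is exactly the bookkeeping you are trying to avoid.) The paper's argument sidesteps this entirely: the vanishing of $H_{7,3}(\overline{\gR}_{\bF_2})$ absorbs the contribution of $\sigma^4 Q^3(\sigma)$ automatically, since that class is in the image of $\sigma\cdot-$ and the cofibre sequence picks out precisely the cokernel.

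In short: your route is viable in principle but requires resolving both the fate of $\sigma^4 Q^3(\sigma)$ and the positive-filtration terms of the skeletal spectral sequence for $\overline{\gR}_{\bF_2}$, neither of which is accessible without more information about the cells $x_i$ than the vanishing line provides. The paper's approach bypasses both problems by working one degree higher and using results already in hand.
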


\begin{proof}
The proof of \cref{thm:main-f2} provides a map of exact sequences
	\[\begin{tikzcd} H_{7,4}(\overline{\gA}/\sigma) \rar{\delta} \dar & H_{6,3}(\overline{\gA}) \rar{\sigma} \dar & H_{7,3}(\overline{\gA}) \rar \dar & 0 \\
	H_{7,4}(\overline{\gR}_{\bF_2}/\sigma)  \rar{\delta} & H_{6,3}(\overline{\gR}_{\bF_2}) \rar{\sigma} & H_{7,3}(\overline{\gR}_{\bF_2}) \rar & 0.\end{tikzcd}\]
The calculations of $\overline{\gA}/\sigma$ in that proof give us part of the top row: $H_{7,4}(\overline{\gA}/\sigma) = \bF_2$ generated by $\tau Q^1(\sigma)^2$ which the connecting homomorphism sends to $Q^1(\sigma)^3$.	
	
From the spectral sequence \eqref{eqn:ga-skel-ss} it follows that the left vertical map is surjective. From the vanishing range of \cref{cor:vanishing} we see that $H_{7,3}(\overline{\gR}_{\bF_2}) = 0$. Thus the composite 
\[H_{7,4}(\overline{\gA}/\sigma)\lra H_{7,4}(\overline{\gR}_{\bF_2}/\sigma)  \lra H_{6,3}(\overline{\gR}_{\bF_2})\]
is a surjective map from a $1$-dimensional $\bF_2$-vector space. By naturality its image is generated by $Q^1(\sigma)^3$.
\end{proof}

Using Szymik's calculation that $H_{6,3}(\gR_{\bF_2}) \neq 0$, it follows that $H_{6,3}(\gR_{\bF_2}) = \bF_2\{Q^1(\sigma)^3\}$. Note that this class must destabilise (twice), as applying $Q^2$ to the relation $\sigma Q_1(\sigma) = 0$ gives the relation $\sigma^2 Q^{2,1}(\sigma)+Q^1(\sigma)^3 = 0$.

As announced in the introduction, the above discussion allows us to resolve the remaining case $n=3$ of the Milgram--Priddy question.

\begin{proposition}\label{prop:MP}
The class $\det_3 \in  H^3(M_{3,3}(\bF_2);\bF_2)^{\mr{GL}_3(\bF_2) \times \mr{GL}_3(\bF_3)}$ lies in the image of the restriction map from $H^3(\mr{GL}_6(\bF_2);\bF_2)$.
\end{proposition}

\begin{proof}
The class $Q^1(\sigma) \in H_1(\mr{GL}_2(\bF_2);\bF_2)$ is represented by the matrix $\left[\begin{smallmatrix} 0 & 1 \\ 1 & 0\end{smallmatrix}\right]$, or equivalently by the conjugate matrix $\left[\begin{smallmatrix} 1 & 1 \\ 0 & 1\end{smallmatrix}\right]$. Therefore the class $Q^1(\sigma)^3 \in H_3(\mr{GL}_6(\bF_2);\bF_2)$ is carried on the subgroup $G_1 \subset \mr{GL}_6(\bF_2)$ of matrices of the form
	\[\begin{bmatrix} 1 & \ast & 0 & 0 & 0 & 0 \\
	0 & 1 & 0 & 0 & 0 & 0  \\
	0 & 0 & 1 & \ast & 0 & 0  \\
	0 & 0 & 0 & 1 & 0 & 0    \\
	0 & 0 & 0 & 0 & 1 & \ast  \\
	0 & 0 & 0 & 0 & 0  & 1   \\
	\end{bmatrix}.\]
By conjugating with the permutation matrix in $\fS_6 \subset \mr{GL}_6(\bF_2)$ which interchanges $2$ and $5$, we see that the subgroup $G_1$ is conjugate to the subgroup $G_2$ consisting of matrices of the form
	\[\begin{bmatrix} 1 & 0 & 0 & 0 & \ast & 0 \\
0 & 1 & 0 & 0 & 0 & \ast  \\
0 & 0 & 1 & \ast & 0 & 0  \\
0 & 0 & 0 & 1 & 0 & 0    \\
0 & 0 & 0 & 0 & 1 & 0  \\
0 & 0 & 0 & 0 & 0  & 1   \\
\end{bmatrix},\]
so that $H_3(G_2;\bF_2) \to H_3(\mr{GL}_6(\bF_2);\bF_2)$ also has image containing $Q^1(\sigma)^3$.

The inclusion $G_2 \hookrightarrow \mr{GL}_6(\bF_2)$ factors over the subgroup $M_{3,3}(\bF_2)$. Letting $x \in H^3(\mr{GL}_6(\bF_2);\bF_2)$ denote the cohomology class such that $\langle x,Q^1(\sigma)^3 \rangle = 1$, we conclude that the pullback of $x$ to $H^3(M_{3,3}(\bF_2);\bF_2)$ is a non-zero $\mr{GL}_3(\bF_2) \times \mr{GL}_3(\bF_2)$-invariant cohomology class. $H^*(M_{3,3}(\bF_2);\bF_2)$ can be identified with the free commutative $\bF_2$-algebra on the entries of a $(3 \times 3)$-matrix, the two factors of $\mr{GL}_3(\bF_2) \times \mr{GL}_3(\bF_2)$ acting by left and right multiplication on these entries. This evidently contains the determinant $\det_3$, and it is an elementary but long computation that $\det_3$ is the only $\mr{GL}_3(\bF_2) \times \mr{GL}_3(\bF_2)$-invariant cohomology class in $H^3(M_{3,3}(\bF_2);\bF_2)$. In \cref{lem:invgl3} below we give a shorter proof relying on some results in the literature.\end{proof}

\section{Homology of Steinberg modules} \label{sec:homology-steinberg} 

In \cref{sec:e1-steinberg-fp} we saw that the Steinberg module $\mr{St}(\bF_q^n)$ has the property that $\mr{St}(\bF_q^n) \otimes \bF_p$ is a projective and irreducible $\bF_p[\mr{GL}_n(\bF_q)]$-module (we stated it there only for $q = p^r$ with $q \neq 2$, but it also holds for $q=2$ \cite[Sections 2 and 3]{Humphreys}). As before, it follows that $H_*(\mr{GL}_n(\bF_q); \mr{St}(\bF_q^n) \otimes \bF_p)=0$.

What happens when we replace $\bF_p$ with $\bF_\ell$? Using the methods of this paper, together with Quillen's calculation of $H_*(\mr{GL}_n(\bF_q); \bF_\ell)$ for $\ell \neq p$, we will calculate $H_*(\mr{GL}_n(\bF_q); \mr{St}(\bF_q^n) \otimes \bF_\ell)$ for all $\ell \neq p$. 

\begin{theorem}\label{thm:SteinbergHomology}
Let $q = p^r$, $\ell \neq p$, and $t>0$ be minimal such that $q^t \equiv 1 \mod \ell$.	There is an isomorphism of bigraded $\bF_\ell$-vector spaces
	\[\bigoplus_{n, d \geq 0} H_{d-n}(\mr{GL}_n(\bF_q); \mr{St}(\bF_q^n) \otimes \bF_\ell) \cong \Lambda[s\sigma, s\xi_1, s\xi_2, \ldots] \otimes \Gamma_{\bF_\ell}[s\eta_1, s\eta_2, \ldots]\]
with bidegrees $(n,d)$ of the generators of the right-hand side given by $|s\sigma| = (1,1)$, $|s\xi_i| = (t, 2it+1)$, and $|s\eta_i| = (t, 2it)$.
\end{theorem}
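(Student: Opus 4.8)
The approach is to mimic the strategy used throughout the paper: realize $\bigsqcup_n B\mr{GL}_n(\bF_q)$ as (the underlying space of) an $E_\infty$-algebra $\gR_{\bF_\ell} \coloneqq \bF_\ell[\gR]$, observe that by \eqref{eqn:e1homology-ste1} the groups $H_*(\mr{GL}_n(\bF_q); \mr{St}^{E_1}(\bF_q^n) \otimes \bF_\ell)$ compute the $E_1$-homology of $\gR_{\bF_\ell}$ up to a shift, and then transfer this to information about cells. The crucial simplification when $\ell \neq p$ is that Quillen's computation gives $H_*(\mr{GL}_n(\bF_q);\bF_\ell)$ completely, and in fact the stable homology $H_*(\mr{GL}_\infty(\bF_q);\bF_\ell)$ is a free graded-commutative algebra $\Lambda[s\xi_1,s\xi_2,\ldots] \otimes \Gamma_{\bF_\ell}[s\eta_1,s\eta_2,\ldots]$ on classes in the degrees appearing in the statement, with $\xi_i, \eta_i$ living in the image of the $J$-homomorphism / coming from $\mr{im}\,J$-type classes of period $t$. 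Concretely, the plan is: \textbf{(i)} show $\gR_{\bF_\ell}$, as an $E_\infty$-algebra, is equivalent to a \emph{free} graded-commutative (equivalently free $E_\infty$-) algebra on the bidegree $(1,1)$ class $\sigma$ together with the stable generators $\xi_i$ (bidegree $(t,2it+1)$) and the divided-power generators $\eta_i$ (bidegree $(t,2it)$) — more precisely, that the map from such a free algebra is an $E_\infty$-homology isomorphism, hence a weak equivalence after a cellular approximation; \textbf{(ii)} identify $\mr{St}^{E_1}$-homology with $E_1$-homology via \eqref{eqn:e1homology-ste1}, so that once we know the $E_\infty$-cell structure (and hence, via the bar/iterated-bar spectral sequences of $E_k$.14, the $E_1$-cell structure) we read off $H_*(\mr{GL}_n(\bF_q);\mr{St}^{E_1}(\bF_q^n)\otimes\bF_\ell)$; and \textbf{(iii)} run Theorem \ref{thm.steinbergcoinv} \emph{backwards}, or more precisely use the filtration it provides relating $\mr{St}^{E_1}$ to $\mr{St}$ and the relative Steinberg modules, to convert the $E_1$-homology computation into the stated answer for $\mr{St}$-homology.

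\textbf{Key steps, in order.}
First I would recall Quillen's theorem in the precise form that $H_*(\gR_{\bF_\ell})$, as a bigraded ring, is the free graded-commutative $\bF_\ell$-algebra on $\sigma$ in bidegree $(1,1)$ and classes detecting the $\mr{im}\,J$-pattern: the exterior generators $s\xi_i$ in bidegrees $(t,2it+1)$ and divided-power generators $s\eta_i$ in bidegrees $(t,2it)$ (these come from $BGL_1(\bF_{q^t})$-type summands; the divided powers appear because we are over $\bF_\ell$ and $\ell$ may be small). Second, I would promote this to an $E_\infty$-statement: build a map $\gE_\infty(\bigvee S^{1,1}_{\bF_\ell}\sigma \vee \bigvee_i S^{t,2it+1}_{\bF_\ell}\xi_i \vee \bigvee_i S^{t,2it}_{\bF_\ell}\eta_i) \to \gR_{\bF_\ell}$ realizing these classes, compute the homology of the source as a free $W_\infty$-algebra (Dyer–Lashof; but over $\bF_\ell$ with $\ell \neq p$ the relevant Dyer–Lashof operations are $\ell$-primary, and one checks the free $W_\infty$-algebra on these generators has exactly the Quillen homology — this is where the divided powers $\Gamma$ on the $\eta_i$ come from on the nose), and conclude the map is a homology iso hence a weak equivalence. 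Third, having a CW-$E_\infty$ structure with explicitly these cells, use the skeletal/cell-attachment spectral sequence (Corollary $E_k$.10.17) together with the Hurewicz-type comparison to compute $H^{E_\infty}_{n,d}(\gR_{\bF_\ell})$ — it is the bigraded $\bF_\ell$-vector space spanned by the cells, i.e. $\bF_\ell\{\sigma\}$ in $(1,1)$ plus $\bF_\ell\{\xi_i\}$ in $(t,2it+1)$ plus $\bF_\ell\{\eta_i\}$ in $(t,2it)$. Fourth, transfer from $E_\infty$ to $E_1$ using the bar spectral sequences of Section $E_k$.14 (the free $E_\infty$-algebra is the free $E_1$-algebra on its free $E_\infty$-generators' orbit — concretely the iterated-bar construction turns the $E_\infty$-cells into a larger but explicit set of $E_1$-cells). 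Fifth, feed the resulting $E_1$-homology into \eqref{eqn:e1homology-ste1} to get $H_*(\mr{GL}_n(\bF_q);\mr{St}^{E_1}(\bF_q^n)\otimes\bF_\ell)$, then use the filtration of $\mr{St}^{E_1}$ by $\mr{St}$ and relative Steinberg modules from the proof of Theorem \ref{thm.steinbergcoinv} (the spectral sequence in Proposition \ref{prop:first-reduction-general}) to solve for $H_*(\mr{GL}_n(\bF_q);\mr{St}(\bF_q^n)\otimes\bF_\ell)$. Assembling over all $n$ and bookkeeping the bigradings gives $\Lambda[s\sigma,s\xi_1,\ldots]\otimes\Gamma_{\bF_\ell}[s\eta_1,\ldots]$.

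\textbf{The main obstacle.}
The hard part will be step two and its interaction with step five: pinning down precisely which Dyer–Lashof (mod-$\ell$) operations act freely — so that the free $E_\infty$-algebra on the $\sigma,\xi_i,\eta_i$ really has Quillen's homology with no extra relations and no missing classes — and, dually, inverting the $\mr{St}^{E_1} \rightsquigarrow \mr{St}$ filtration. In the $\ell = p$ case worked out in the paper one only needs vanishing \emph{lines}, but here we want an exact computation, so every differential in the cell-attachment and bar spectral sequences must be shown to vanish (or accounted for), and the filtration relating $\mr{St}^{E_1}$, $\mr{St}$, and the relative Steinberg modules must be shown to split in the appropriate sense. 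I expect the cleanest route is to first establish the $E_\infty$-algebra splitting $\gR_{\bF_\ell} \simeq \gE_\infty(\ldots)$ directly from Quillen's ring computation plus knowledge of the Dyer–Lashof action on $H_*(BGL_1(\bF_{q^t});\bF_\ell)$ and $H_*(\mr{im}\,J;\bF_\ell)$, after which steps three through five are formal manipulations of spectral sequences whose differentials vanish for degree reasons (all cells lie on the two lines of slopes forced by $t$, and the operations only move classes up those lines).
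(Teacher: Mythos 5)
The central missing idea is the paper's Lemma preceding the theorem: for $\ell \neq p$, the map $\phi_*\colon H_*(\mr{GL}_n(\bF_q);\mr{St}^{E_1}(\bF_q^n)\otimes\bF_\ell)\to H_*(\mr{GL}_n(\bF_q);\mr{St}(\bF_q^n)\otimes\bF_\ell)$ induced by forgetting the complement is already an isomorphism. The proof is short and clean: after taking homotopy orbits, the map of semi-simplicial spaces from the split building to the Tits building is levelwise an $\bF_\ell$-homology equivalence, because the inclusion of block-diagonal into block-upper-triangular subgroups has a retraction whose kernel is a $p$-group (hence $\bF_\ell$-acyclic). Your step (iii)/(v), which proposes to ``run Theorem \ref{thm.steinbergcoinv} backwards'' and invert the filtration relating $\mr{St}^{E_1}$, $\mr{St}$, and the relative Steinberg modules, is going the wrong way around: that filtration is not designed to be inverted, and you correctly flag it as the main obstacle --- it is, in fact, the wrong obstacle to be facing. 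Once you have $\phi_*$ an isomorphism, equation \eqref{eqn:e1homology-ste1} identifies the desired groups with $E_1$-homology of $\gR_{\bF_\ell}$ on the nose, and no further work with the filtration from Section \ref{sec:e1-splitting-generic} is needed.

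Your step (i)/(ii) proposing a free $E_\infty$-algebra model and a full $E_\infty$-CW approximation is both more than you need and very likely false. First, the computation the paper performs is a pure $E_1$-computation: Theorem $E_k$.13.1.7 gives $S^1\wedge Q^{E_1}_\bL(\gR_{\bF_\ell}) \simeq B^{E_1}(\gR_{\bF_\ell})/S^{0,0}$, and the bar spectral sequence $E^2 = \mathrm{Tor}_{H_{*,*}(\gR_{\bF_\ell})}(\bF_\ell,\bF_\ell) \Rightarrow H_{*,*}(B^{E_1}(\gR_{\bF_\ell}))$ is computed directly from Quillen's ring $H_{*,*}(\gR_{\bF_\ell})\cong\bF_\ell[\sigma,\xi_i]\otimes\Lambda[\eta_i]$. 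Since this is free graded-commutative, the $E^2$-page is $\Lambda[s\sigma,s\xi_i]\otimes\Gamma[s\eta_i]$ (the $\Gamma$ comes from $\mathrm{Tor}$ of the exterior algebra on the $\eta_i$, \emph{not} from any $W_\infty$ structure), generated in $\mathrm{Tor}$-degree $1$, and multiplicativity (from the $E_2$-structure) forces collapse. Second, the assertion that $\gR_{\bF_\ell}$ is a free $E_\infty$-algebra on $\sigma$, the $\xi_i$, and the $\eta_i$ does not match Quillen's answer: a free $E_\infty$-algebra over $\bF_\ell$ on a class $\xi_i$ in positive even degree $2it$ and rank $t$ would contain nontrivial Dyer--Lashof classes $Q^s(\xi_i)$ and $\beta Q^s(\xi_i)$ in ranks $\ell t$, but $H_{*,*}(\gR_{\bF_\ell})$ has no room for these extra generators --- it is literally the free graded-commutative ring and nothing more. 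So the map you propose to build would not be a homology isomorphism, and your ``main obstacle'' paragraph, which worries about whether every Dyer--Lashof operation acts freely, is grappling with a false premise rather than a gap to be filled.

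In short: your instinct to relate $\mr{St}$-homology to $E_1$-homology via $\mr{St}^{E_1}$ is right, but you need the direct comparison lemma (which is specific to $\ell\neq p$) rather than the filtration, and you should stay at the level of the $E_1$-bar construction and $\mathrm{Tor}$ over the known homology ring, avoiding $E_\infty$-cells entirely.
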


One may extract the following vanishing theorem from this calculation (and slight extensions are possible for specific $n$ and $q$). It improves by a factor of 2 a recent vanishing result of Ash--Putman--Sam \cite[Theorem 1.1]{APS} in the case of general linear groups of finite fields.

\begin{corollary}
	$H_d(\mr{GL}_n(\bF_q); \mr{St}(\bF_q^n))=0$ for $d < n-1$.
\end{corollary}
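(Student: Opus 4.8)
The statement is vacuous for $n\leq 1$, so I will assume $n\geq 2$ throughout. The plan is to combine the $\bF_\ell$-coefficient calculation of Theorem~\ref{thm:SteinbergHomology} for all $\ell\neq p$ with the vanishing of $H_*(\mr{GL}_n(\bF_q);\mr{St}(\bF_q^n)\otimes\bF_p)$ in all degrees, and then to upgrade these mod-$\ell$ statements to an integral one by a Bockstein argument; this last step uses crucially that $\mr{St}(\bF_q^n)$ is free as an abelian group.

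The first task is to extract a vanishing range from Theorem~\ref{thm:SteinbergHomology}. In the bigraded algebra $\Lambda[s\sigma,s\xi_1,s\xi_2,\ldots]\otimes\Gamma_{\bF_\ell}[s\eta_1,s\eta_2,\ldots]$ the first grading is the rank $n$ and the second grading is the homological degree shifted up by $n$, so that $s\sigma\in H_0(\mr{GL}_1(\bF_q);\mr{St}(\bF_q^1)\otimes\bF_\ell)$ sits in bidegree $(1,1)$. Every algebra generator other than $s\sigma$ has second coordinate at least twice its first coordinate --- indeed $|s\xi_i|=(t,2it+1)$ and $|s\eta_i|=(t,2it)$ with $2it\geq 2t$ and $2it+1>2t$ for $i\geq 1$ --- while $s\sigma$ has bidegree $(1,1)$. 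A nonzero monomial of rank $n$ involves the exterior generator $s\sigma$ at most once; removing that factor, the remaining factors have ranks summing to $n$ or to $n-1$ and each has second degree at least twice its own rank, so the monomial has second degree at least $2n$ (if $s\sigma$ is absent) or at least $1+2(n-1)=2n-1$ (if $s\sigma$ is present), and in the ranks with $n\not\equiv 0,1\pmod t$ there are no monomials at all. Unwinding the shift by $n$, I conclude that for every $\ell\neq p$ and every $n\geq 2$ one has $H_d(\mr{GL}_n(\bF_q);\mr{St}(\bF_q^n)\otimes\bF_\ell)=0$ for $d<n-1$ (indeed for $d<n$ unless $n\equiv 1\pmod t$, which is the source of the ``slight extensions'' alluded to above).

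It remains to pass to integral coefficients. Since $\mr{St}(\bF_q^n)\otimes\bF_p$ is projective and irreducible over $\bF_p[\mr{GL}_n(\bF_q)]$ for $n\geq 2$, its group homology vanishes in all degrees, so combining with the previous paragraph we get $H_d(\mr{GL}_n(\bF_q);\mr{St}(\bF_q^n)\otimes\bF_\ell)=0$ for $d<n-1$ and \emph{every} prime $\ell$. As $\mr{St}(\bF_q^n)$ is $\bZ$-free, multiplication by $\ell$ is injective on it, and the short exact sequence $0\to\mr{St}(\bF_q^n)\xrightarrow{\ell}\mr{St}(\bF_q^n)\to\mr{St}(\bF_q^n)\otimes\bF_\ell\to 0$ yields a long exact sequence in $\mr{GL}_n(\bF_q)$-homology showing that multiplication by $\ell$ on $H_d(\mr{GL}_n(\bF_q);\mr{St}(\bF_q^n))$ is surjective whenever $H_d(\mr{GL}_n(\bF_q);\mr{St}(\bF_q^n)\otimes\bF_\ell)=0$, in particular whenever $d<n-1$. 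Thus $H_d(\mr{GL}_n(\bF_q);\mr{St}(\bF_q^n))$ is $\ell$-divisible for every prime $\ell$ when $d<n-1$; since it is a finitely generated abelian group (compute with the bar resolution, whose terms are finitely generated free $\bZ[\mr{GL}_n(\bF_q)]$-modules), it must be zero, which is the claim. I expect the whole argument to be routine; the only point that genuinely needs care is matching the rank-shift convention in Theorem~\ref{thm:SteinbergHomology} so that the minimal-degree count in the second step lands exactly at $n-1$.
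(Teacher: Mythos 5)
Your proof is correct and follows exactly the route the paper intends: the authors state this corollary immediately after Theorem~\ref{thm:SteinbergHomology} with only the remark that ``one may extract'' it, leaving the reader to supply precisely the three steps you give (the degree count in the bigraded algebra, the mod-$p$ vanishing from projectivity, and the Bockstein/finite-generation bootstrap to integral coefficients). You have handled the rank-shift convention correctly, since $H_d(\mathrm{GL}_n(\mathbb{F}_q);\mathrm{St}(\mathbb{F}_q^n)\otimes\mathbb{F}_\ell)$ sits in bidegree $(n,d+n)$ and every generator except $s\sigma$ has internal degree at least twice its rank, giving the sharp bound $d\geq n-1$ exactly when $s\sigma$ occurs.
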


To begin the proof of \cref{thm:SteinbergHomology}, recall that the Steinberg module is the reduced top homology of the Tits building $T(\bF_q^n)$ for $\bF_q^n$:
	\[\mr{St}(\bF_q^n) \coloneqq \widetilde{H}_{n-2}(T(\bF_q^n);\bZ).\]
On the other hand, the $E_1$-Steinberg module is the reduced top homology of the split Tits building:
\[\mr{St}^{E_1}(\bF_q^n) \coloneqq \widetilde{H}_{n-2}(S^{E_1}(\bF_q^n);\bZ).\]
Here $T(\bF_q^n)$ is the geometric realisation of the nerve $\cT_\bullet(\bF_q^n)$ of the poset of proper subspaces of $\bF_q^n$ and their inclusions, whereas $S^{E_1}(\bF_q^n)$ is the geometric realisation of the nerve $S^{E_1}_\bullet(\bF_q^n)$ of the poset $\cS^{E_1}(\bF_q^n)$ of ordered pairs $(V_0,V_1)$ of complementary subspaces of $\bF_q^n$, where the $V_0$'s are ordered by inclusion and the $V_1$'s by reverse inclusion. Forgetting the $V_1$'s defines a map of posets, and so a map
\[\phi \colon S^{E_1}(\bF_q^n) \lra T(\bF_q^n).\]
This induces in particular a map $\phi_* \colon \mr{St}^{E_1}(\bF_q^n) \to \mr{St}(\bF_q^n)$. This is far from being an isomorphism, but we do have the following comparison.

\begin{lemma}
	For $\ell \neq p$ the induced map
	\[\phi_* \colon H_*(\mr{GL}_n(\bF_q); \mr{St}^{E_1}(\bF_q^n) \otimes \bF_\ell) \lra H_*(\mr{GL}(\bF_q^n); \mr{St}(\bF_q^n) \otimes \bF_\ell)\]
	is an isomorphism.
\end{lemma}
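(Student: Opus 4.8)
The plan is to reinterpret both sides as reduced $\bF_\ell$-homology of homotopy orbit spaces and then compare $\phi$ cell-by-cell, exploiting that for $\ell\neq p$ the difference between a parabolic of $\mr{GL}_n(\bF_q)$ and its Levi subgroups is a $p$-group, hence invisible to $\bF_\ell$-homology.

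First I would record the isomorphism
\[H_d(\mr{GL}_n(\bF_q);\mr{St}^{E_1}(\bF_q^n)\otimes\bF_\ell)\;\cong\;\tilde H_{d+n-2}\big(S^{E_1}(\bF_q^n)_{h\mr{GL}_n(\bF_q)};\bF_\ell\big),\]
valid because $S^{E_1}(\bF_q^n)$ is $(n-2)$-spherical with $\bZ$-free top homology, so the homotopy-orbit spectral sequence $H_s(\mr{GL}_n(\bF_q);\tilde H_t(S^{E_1}(\bF_q^n);\bF_\ell))\Rightarrow \tilde H_{s+t}(S^{E_1}(\bF_q^n)_{h\mr{GL}_n(\bF_q)};\bF_\ell)$ is concentrated in the single row $t=n-2$; the analogous statement holds for $T(\bF_q^n)$ with $\mr{St}(\bF_q^n)$, and both identifications are natural in $\phi$. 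So it suffices to show that $\phi$ induces an $\bF_\ell$-homology isomorphism after applying $(-)_{h\mr{GL}_n(\bF_q)}$.

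Next I would present both spaces as thick realisations of semisimplicial sets with transitive $\mr{GL}_n(\bF_q)$-action in each degree: $S^{E_1}(\bF_q^n)=\|S^{E_1}_\bullet(\bF_q^n)\|$ by definition, and $T(\bF_q^n)$ is homotopy equivalent to the thick realisation of the semisimplicial set $\cT'_\bullet(\bF_q^n)$ of strict flags. The map $\phi$ refines to a semisimplicial map sending $(V_0,\dots,V_{p+1})$ to the flag $(V_0\subsetneq V_0\oplus V_1\subsetneq\cdots\subsetneq V_0\oplus\cdots\oplus V_p)$, compatibly with faces (merging adjacent summands on the left corresponds to deleting a subspace on the right). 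In simplicial degree $p$ the $\mr{GL}_n(\bF_q)$-orbit sets on both sides are identified with the set of compositions $(d_0,\dots,d_{p+1})$ of $n$ into $p+2$ positive parts, and $\phi$ is the identity on this indexing set; the stabiliser of a decomposition is a standard Levi $L\cong\prod_i\mr{GL}_{d_i}(\bF_q)$, the stabiliser of the associated flag is the parabolic $Q\supseteq L$ in which $L$ is a Levi complement, and $Q=L\ltimes U$ with $U$ its unipotent radical, a finite $p$-group.

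Finally, since homotopy orbits commute with realisation, $S^{E_1}(\bF_q^n)_{h\mr{GL}_n(\bF_q)}$ and $T(\bF_q^n)_{h\mr{GL}_n(\bF_q)}$ are thick realisations of semisimplicial spaces with $p$-simplices $\bigsqcup_{(d_i)}BL_{(d_i)}$ and $\bigsqcup_{(d_i)}BQ_{(d_i)}$ respectively, and $\phi$ is levelwise the disjoint union of the maps $BL_{(d_i)}\to BQ_{(d_i)}$. As $\ell\neq p$, the finite $p$-group $U_{(d_i)}$ is $\bF_\ell$-acyclic, so the fibration sequence $BU_{(d_i)}\to BQ_{(d_i)}\to BL_{(d_i)}$ shows each $BL_{(d_i)}\to BQ_{(d_i)}$ is an $\bF_\ell$-homology isomorphism. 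Hence $\phi$ is a levelwise $\bF_\ell$-homology equivalence of semisimplicial spaces, and therefore (the skeletal spectral sequence converging because $S^{E_1}(\bF_q^n)$ has finitely many simplices) an $\bF_\ell$-homology equivalence on thick realisations, which together with the first step proves the lemma. The only fiddly points are verifying that $\phi$ genuinely is a semisimplicial map with the claimed effect on orbits and stabilisers, and moving between the thin realisation of the nerve of $\cT(\bF_q^n)$ and the thick realisation of its flag complex; there is no essential obstacle here, the entire content being the $\ell\neq p$ acyclicity of unipotent radicals.
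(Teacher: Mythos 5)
Your argument is correct and follows essentially the same route as the paper's proof: both reduce to comparing levelwise homotopy orbits of the semisimplicial sets of splittings and flags, identify the orbit sets in each simplicial degree, and observe that the inclusion of a stabiliser (a Levi, i.e.\ block-diagonal subgroup) into the corresponding flag stabiliser (the parabolic, i.e.\ block-upper-triangular subgroup) is an $\bF_\ell$-homology isomorphism because its cokernel is the unipotent radical, a $p$-group. The only cosmetic difference is that the paper phrases the last step via the split surjection $BUT \twoheadrightarrow BD$ with $p$-group kernel rather than the fibration $BU \to BQ \to BL$; these are the same observation.
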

\begin{proof}
	As both the Steinberg and $E_1$-Steinberg modules are the reduced top homology of the spaces $T(\bF_q^n)$ and $S^{E_1}(\bF_q^n)$ respectively, which are each wedges of spheres, it suffices to prove that the induced map
	\[\phi \hcoker \mr{GL}_n(\bF_q) \colon S^{E_1}(\bF_q^n) \hcoker \mr{GL}_n(\bF_q) \lra T(\bF_q^n) \hcoker \mr{GL}_n(\bF_q)\]
	on homotopy orbits induces an isomorphism on homology with $\bF_\ell$-coefficients. The group $\mr{GL}(\bF_q^n)$ acts simplicially on the semi-simplicial sets $S^{E_1}_\bullet(\bF_q^n)$ and $T_\bullet(\bF_q^n)$, and taking levelwise homotopy orbits gives a map of semi-simplicial spaces. 
	
	The $p$-simplices of $\smash{S^{E_1}_\bullet(\bF_q^n)}$ are given by the set of splittings $V_0 \oplus V_1 \oplus \cdots \oplus V_{p+1}= \bF_q^n$ of $\bF_q^n$ into $p+2$ non-zero subspaces, whereas the $p$-simplices of $T_\bullet(\bF_q^n)$ is the set of proper flags
	\[0 < W_0 < W_1 < \cdots < W_{p} < \bF_q^n.\]
	The map $S^{E_1}_p(\bF_q^n) \to T_p(\bF_q^n)$ is a bijection on $\mr{GL}_n(\bF_q)$-orbits. The stabiliser of a decomposition $V_0 \oplus V_1 \oplus \cdots \oplus V_{p+1}$ is the subgroup $\mr{GL}(V_0) \times \cdots \times \mr{GL}(V_{p+1})$, which we may think of as block-diagonal matrices $BD \leq \mr{GL}_n(\bF_q)$ with certain block sizes, whereas the stabiliser of the associated flag with $W_i \coloneqq V_0 \oplus \cdots \oplus V_i$ is the associated subgroup of block-upper triangular matrices $BUT \leq \mr{GL}_n(\bF_q)$. The inclusion $i \colon BD \to BUT$ is split by the homomorphism $\rho \colon BUT \to BD$ which records the diagonal blocks. The kernel of $\rho$ is a $p$-group, so has vanishing $\bF_\ell$-homology: thus $\rho$ is a $\bF_\ell$-homology equivalence, and so $i$ is too.
	
	It follows that the semi-simplicial map $S^{E_1}_\bullet(\bF_q^n) \hcoker \mr{GL}_n(\bF_q) \to T_\bullet(\bF_q^n) \hcoker \mr{GL}_n(\bF_q)$ is a levelwise $\bF_\ell$-homology equivalence, so the map on geometric realisations is too.
\end{proof}

Using the identification in this lemma, \eqref{eqn:e1homology-ste1} says that
\[H_{n,d}(Q^{E_1}_\bL(\gR_{\bF_\ell}))= H^{E_1}_{n,d}(\gR_{\bF_\ell}) \cong H_{d-(n-1)}(\mr{GL}_n(\bF_q); \mr{St}(\bF_q^n) \otimes \bF_\ell).\]
On the other hand, Theorem $E_k$.13.7 gives a pointed weak equivalence 
\[S^{1} \wedge Q^{E_1}_\bL(\gR_{\bF_\ell}) \simeq {B}^{E_1}(\gR_{\bF_\ell})/S^{0,0}.\]
We may compute the homology of ${B}^{E_1}(\gR_{\bF_\ell})$ using the bar spectral sequence of Theorem $E_k$.14.1, which takes the form
\[E^2_{*, p, q} = \mathrm{Tor}^{p, q}_{H_{*,*}(\gR_{\bF_\ell})}(\bF_\ell[\bunit], \bF_\ell[\bunit]) \Longrightarrow H_{*,p+q}({B}^{E_1}(\gR_{\bF_\ell})).\]

At this point we invoke Quillen's calculation of the $\bF_\ell$-homology of the groups $\mr{GL}(\bF_q^n)$. Quillen shows \cite[Theorem 3]{quillenfinite} that there is a ring isomorphism
	\[H_{*,*}(\gR_{\bF_\ell}) \cong \bF_\ell[\sigma, \xi_1, \xi_2, \ldots] \otimes \Lambda[\eta_1, \eta_2, \ldots]\]
for classes of bidegrees $(n,d)$ given by $|\sigma|=(1,0)$, $|\xi_i| = (t, 2it)$, and $|\eta_i| = (t, 2it-1)$. The bar spectral sequence therefore begins with
	\[E^2_{n,p,q} = \Lambda[s\sigma, s\xi_1, s\xi_2, \ldots] \otimes \Gamma_{\bF_\ell}[s\eta_1, s\eta_2, \ldots]\]
for classes of tridegrees $(n,p,q)$ given by $|s\sigma|=(1,1,0)$, $|s\xi_i| = (t, 1, 2it)$, and $|s\eta_i| = (t, 1, 2it-1)$. As $\gR_{\bF_\ell}$ is an $E_2$-algebra (in fact an $E_\infty$-algebra), by Lemma $E_k$.14.3 the bar spectral sequence is a spectral sequence of $\bF_\ell$-algebras. As it is multiplicatively generated by classes of $p$-degree 1, it must collapse. Under the isomorphisms established above, this proves \cref{thm:SteinbergHomology}.

\appendix

\section{Some computations in modular invariant theory} 

In this appendix, we prove the following lemma, which is used in the proof of \cref{prop:MP}.

\begin{lemma}\label{lem:invgl3} $H^3(M_{3,3}(\bF_2);\bF_2)^{\mr{GL}_3(\bF_2) \times \mr{GL}_3(\bF_3)} = \bF_2\{{\det}_3\}$.
\end{lemma}

\begin{proof}We must prove that the determinant is the only non-zero homogeneous degree 3 polynomial $f$ with $\bF_2$-coefficients in the entries $x_{ij}$ of $(3 \times 3)$-matrices over $\bF_2$, which is invariant under both left and right multiplication by invertible matrices. 
	
	Suppose that $f \neq \det_3$ is invariant. We will rule out monomials which can have non-zero coefficients in $f$ on a case-by-case basis. Consider first the monomials $x_{i_1j_1}x_{i_2j_2}x_{i_3j_3}$ where $\# \{i_1,i_2,i_3\} = 3$ and $\# \{j_1,j_2,j_3\} = 3$. Since left and right multiplication by permutation matrices can be used to send $x_{ij}$ to $x_{\sigma(i)\tau(j)}$, all of these monomials have the same coefficient and by subtracting $\det_3$ if necessary we may assume this coefficient is $0$.
	
	Next suppose that $\#  \{i_1,i_2,i_3\} = 1$, then by applying permutations without loss of generality $i_1=i_2=i_3=1$. We may restrict $f$ to the matrices of the form
	\[\begin{bmatrix} * & 0 & 0 \\
	* & 0 & 0 \\
	* & 0 & 0 \end{bmatrix},\]
	to obtain a $\mr{GL}_3(\bF_2)$-invariant polynomial of degree 3 in the entries of vectors in $\bF_2^3$; no such non-zero polynomial exists by Dickson's theorem \cite{Dickson,Wilkerson} saying that the ring of invariants has generators in degrees $2^3-2^i$ for $0 \leq i \leq 2$. Hence each monomial with  $\#  \{i_1,i_2,i_3\} = 1$ must have zero coefficient. A similar argument tells us the same holds for monomials with $\# \{j_1,j_2,j_3\} = 1$.
	
	Next suppose that $\# \{i_1,i_2,i_3\} = 2$ and $\# \{j_1,j_2,j_3\} = 3$, then by applying permutations without loss of generality we may assume that $(i_3,j_3) = (3,3)$.  We may restrict $f$ to the matrices of the form
	\[\begin{bmatrix} * & * & 0 \\
	* & * & 0 \\
	0 & 0 & 1 \end{bmatrix},\]
	to obtain a $\mr{GL}_2(\bF_2)\times \mr{GL}_2(\bF_2)$-invariant polynomial of degree 2 in the entries of $(2 \times 2)$-matrices over $\bF_2$. By \cref{lem:invgl2} below this must be a multiple of $\det_2$ and hence only monomials with $\# \{i_1,i_2,i_3\} = 3$ and $\# \{j_1,j_2,j_3\} = 3$ can have non-zero coefficients, which we ruled out above. We conclude all monomials with $\# \{i_1,i_2,i_3\} = 2$ and $\# \{j_1,j_2,j_3\} = 3$ must have zero coefficient. A similar argument tells us the same holds for monomials with $\# \{i_1,i_2,i_3\} = 3$ and $\# \{j_1,j_2,j_3\} = 2$.
	
	Thus, if a monomial appears in $f$ with non-zero coefficient then it must have $\# \{i_1,i_2,i_3\} = 2$ and $\# \{j_1,j_2,j_3\} = 2$.  We may restrict $f$ to the matrices of the form
	\[\begin{bmatrix} * & * & 0 \\
	* & * & 0 \\
	0 & 0 & 0 \end{bmatrix},\]
	to obtain a $\mr{GL}_2(\bF_2)$-invariant polynomial of degree 3 in the entries of $(2 \times 2)$-matrices over $\bF_2$. By \cref{lem:invgl2} below this must be a multiple of $\mr{Sq}^1(\det_2)$, in which exactly those monomials of the form $x_{ij}^2 x_{i'j'}$ with $i' \neq i$ and $j' \neq i$ have non-zero coefficient. As before, all monomials of this form in $f$ must have the same coefficient, and thus $f$ is a multiple of 
	\[\sum_{i,j=1}^3 \sum_{i' \neq i,j' \neq j} x_{ij}^2 x_{i'j'}.\]
	But this is not invariant under left multiplication by 
	\[A = \begin{bmatrix} 1 & 1 & 0 \\
	0 & 1 & 0 \\
	0 & 0 & 1 \end{bmatrix},\]
	which replaces $x_{1i}$ by $x_{1i}+x_{2i}$ and leave the other terms fixed: after applying $A$ the term $x_{22}^2x_{33}$ will have coefficient $0$ (it arises twice, once from $x_{22}^2x_{33}$ and once from $x_{12}^2x_{33}$). 
\end{proof}

\begin{lemma}\label{lem:invgl2} We have 
	\begin{align*}H^2(M_{2,2}(\bF_2);\bF_2)^{\mr{GL}_2(\bF_2) \times \mr{GL}_2(\bF_2)} &= \bF_2\{{\det}_2\},\\
	H^3(M_{2,2}(\bF_2);\bF_2)^{\mr{GL}_2(\bF_2) \times \mr{GL}_2(\bF_2)} &= \bF_2\{\mr{Sq}^1({\det}_2)\}.\end{align*}\end{lemma}

\begin{proof}Let us write the entries of a $(2 \times 2)$-matrix as $\left[\begin{smallmatrix}
	a & b \\
	c & d
	\end{smallmatrix}\right]$. It is clear that the polynomials
	\[{\det}_2 = ad+bc \qquad \Sq^1({\det}_2) = a^2d+ad^2+b^2c+bc^2\]
	are invariant under left and right multiplication by invertible matrices. To see these are the only invariants we use the results of Krathwohl \cite{Krahtwohl} on Dickson invariants of pairs of vectors, that is, on $H^*(M_{2,2}(\bF_2);\bF_2)^{\mr{GL}_2(\bF_2)}$, with $\mr{GL}_2(\bF)$ acting by right multiplication. In degree 2, all such invariants are linear combinations of
	\[a^2+ab+b^2, \qquad c^2+cd+d^2, \qquad {\det}_2,\]
	and in degree 3, all such invariants are linear combinations of
	\[a^2b+ab^2, \qquad c^2d+cd^2, \qquad ad^2+bc^2, \qquad a^2d+b^2c.\]
	By using permutation matrices in $\mr{GL}_2(\bF_2)$ acting by left multiplication, we see that the $\mr{GL}_2(\bF) \times \mr{GL}_2(\bF_2)$-invariants in degree 2 must be of the form
	\[\lambda(a^2+ab+b^2 + c^2+cd+d^2)+\mu({\det}_2).\]
	If there is a non-zero invariant different from $\det_2$ we may assume it is $a^2+ab+b^2 + c^2+cd+d^2$, but this is not invariant under left multiplication by
	\[A = \begin{bmatrix} 1 & 1 \\
	0 & 1\end{bmatrix},\]
	which replaces $a$ by $a+c$ and $b$ by $b+d$ and leaves the other terms fixed: the term $d^2$ will have coefficient $0$ (it arises twice, once from $a^2$ and once from $d^2$).
	
	Similarly, the $\mr{GL}_2(\bF) \times \mr{GL}_2(\bF_2)$-invariants in degree 3 must be of the form
	\[\lambda(a^2b+ab^2 + c^2d+cd^2)+\mu(\mr{Sq}^1({\det}_2)).\]
	If there is a non-zero invariant different from $\mr{Sq}^1(\det_2)$ we may assume it is $a^2b+ab^2 + c^2d+cd^2$, but this is also not invariant under left multiplication by $A$: the term $d^2b$ will have non-zero coefficient (it arises once from $a^2b$).\end{proof}

\begin{remark}
	Alternatively, one replaces the use of \cite{Krahtwohl} by \cite[\S 4]{SmithStong}, which computes the invariants of $\mr{GL}_2(\bF_2)$ acting on $H^*(M_{2,2}(\bF_2);\bF_2)$ by $A \cdot M = A M A^\mr{t}$.
\end{remark}

\bibliographystyle{amsalpha}
\bibliography{biblio}

\vspace{\baselineskip}

\end{document}